\newtheorem{thm}{Theorem}[section]
\newtheorem{lem}[thm]{Lemma}%
\newtheorem{prop}[thm]{Proposition}%
\theoremstyle{remark}
\newtheorem{remark}[thm]{Remark}
\theoremstyle{plain}
\numberwithin{equation}{section}
\def\EE{{\mathbb E}}
\def\new{\operatorname{new}}
\def\Tr{\operatorname{Tr}}
\def\Span{\operatorname{Span}}
\def\veca{{\text{\boldmath$a$}}}
\def\vece{{\text{\boldmath$e$}}}
\def\vecm{{\text{\boldmath$m$}}}
\def\vecu{{\text{\boldmath$u$}}}
\def\vecv{{\text{\boldmath$v$}}}
\def\vecw{{\text{\boldmath$w$}}}
\def\vecx{{\text{\boldmath$x$}}}
\def\vecy{{\text{\boldmath$y$}}}
\def\vecz{{\text{\boldmath$z$}}}
\def\vecbeta{{\text{\boldmath$\beta$}}}
\def\scrB{{\mathcal B}}
\def\scrM{{\mathcal M}}
\def\scrP{{\mathcal P}}
\def\fA{{\mathfrak A}}
\def\fM{{\mathfrak M}}
\def\fS{{\mathfrak S}}
\def\fV{{\mathfrak V}}
\def\fW{{\mathfrak W}}
\def\Re{\operatorname{Re}}
\def\diag{\operatorname{diag}}
\def\dim{\operatorname{dim}}
\def\GL{\operatorname{GL}}
\def\SL{\operatorname{SL}}
\def\Tr{\operatorname{Tr}}
\def\supp{\operatorname{supp}}
\def\vol{\operatorname{vol}}
\def\transp{{\hspace{-1pt}^{\mathsf{T}}}}   %
\def\Onder#1#2#3#4#5{#1 \setbox0=\hbox{$#1$}\setbox1=\hbox{$#2$}
       \dimen0=.5\wd0 \dimen1=\dimen0 \dimen2=\dp0 \dimen3=\dimen2
       \advance\dimen0 by .5\wd1 \advance\dimen0 by -#4
       \advance\dimen1 by -.5\wd1 \advance\dimen1 by -#4
       \advance\dimen2 by -#3 \advance\dimen2 by \ht1
       \advance\dimen2 by 0.3ex \advance\dimen3 by #5
        \kern-\dimen0\raisebox{-\dimen2}[0ex][\dimen3]{\box1}
       \kern\dimen1}
\newcommand{\tA}{A^{-\mathsf{T}}}  %
\newcommand{\dd}{\mathsf{d}}
\newcommand{\hF}{\widehat{F}}
\newcommand{\tB}{\widetilde{B}}
\newcommand{\tchi}{\widetilde{\chi}}
\newcommand{\Q}{\mathbb{Q}}
\newcommand{\C}{\mathbb{C}}
\newcommand{\R}{\mathbb{R}}
\newcommand{\Z}{\mathbb{Z}}
\renewcommand{\mod}{\text{ mod }}
\newcommand{\col}{\: : \:}
\newcommand{\bn}{\mathbf{0}}
\newcommand{\bs}{\backslash}
\newcommand{\tN}{\widetilde{N}}
\newcommand{\tn}{\widetilde{n}}
\newcommand{\trho}{{\widetilde{\rho}}}
\newcommand{\tsigma}{{\widetilde{\sigma}}}
\newcommand{\tscrM}{\widetilde{\scrM}}
\newcommand{\tg}{\tilde{g}}
\newcommand{\tV}{\tilde{V}}
\newcommand{\ve}{\varepsilon}
\newcommand{\matr}[4]{\left( \begin{matrix} #1 & #2 \\ #3 & #4 \end{matrix} \right) }
\newcommand{\cmatr}[2]{\left( \begin{matrix} #1 \\ #2 \end{matrix} \right) }
\newcommand{\smatr}[4]{\left( \begin{smallmatrix} #1 & #2 \\ #3 & #4 \end{smallmatrix} \right) }
\newcommand{\scmatr}[2]{\left( \begin{smallmatrix} #1 \\ #2 \end{smallmatrix} \right) }
\begin{document}
\title{On a mean value formula for multiple sums %
over a lattice and its dual}
\author{Andreas Str\"ombergsson}
\address{Department of Mathematics, Uppsala University, Box 480, SE-75106, Uppsala, Sweden}
\email{astrombe@math.uu.se}

\author{Anders S\"odergren}
\address{Department of Mathematical Sciences, Chalmers University of Technology and the University of
Gothenburg, SE-412 96 Gothenburg, Sweden}
\email{andesod@chalmers.se}

\begin{abstract}
We prove a generalized version of Rogers' mean value formula in the space $X_n$ of unimodular lattices in $\R^n$,
which gives the mean value of 
a multiple sum over a lattice $L$ and its dual $L^*$.
As an application,
we prove that for $L$ random with respect to the $\SL_n(\R)$-invariant probability measure, 
in the limit of large dimension $n$,
the volumes determined by the 
lengths of the non-zero vectors $\pm\vecx$ in $L$ 
on the one hand,
and the non-zero vectors $\pm\vecx'$ in $L^*$
on the other hand,
converge weakly %
to two independent Poisson processes on the 
positive real line, both with intensity $\frac12$.
\end{abstract}

\thanks{Str\"ombergsson was supported by 
the Swedish Research Council (grant 2016-03360)
and by the Knut and Alice Wallenberg Foundation. 
S\"odergren was supported by grants from the Swedish Research Council (grants 2016-03759 and 2021-04605).}

\maketitle
\tableofcontents

\section{Introduction}
\label{INTROsec}

Let $n\geq 2$, 
and set
\begin{align*}
X_n=G/\Gamma,
\qquad\text{with }
G=\SL_n(\R)\text{ and }\Gamma=\SL_n(\Z).
\end{align*}
Also let $\mu$ be the $G$-invariant probability measure on $X_n$.
As usual,
for any $g\in G$ 
we identify the point $g\Gamma$ in $G/\Gamma$ with the lattice $g\Z^n=\{g\vecv\col\vecv\in\Z^n\}$ in $\R^n$;
in this way $X_n$ becomes the space of all lattices of covolume one in $\R^n$.

In 1945, Siegel \cite{cS45a}
proved the following fundamental integration formula:
For any continuous, compactly supported function $\rho$ on $\R^n$, %
\begin{align}\label{SIEGELintformula}
\int_{X_n}\sum_{\vecv\in L}\rho(\vecv)\,d\mu(L)=\int_{\R^n}\rho(\vecx)\,d\vecx+\rho(\bn),
\end{align}
where $d\vecx$ denotes Lebesgue measure on $\R^n$.
The integrand in the left-hand side,
i.e.\ the function $L\mapsto\sum_{\vecv\in L}\rho(\vecv)$ on $X_n$,
is often called the Siegel transform of the function $\rho$.
Ten years later, Rogers \cite{cR55}\footnote{See also the papers
\cite{wS57ab}
and
\cite{aMcR58a}
for alternative and corrected proofs.}
proved a generalization of \eqref{SIEGELintformula}
which for any $1\leq k<n$ 
gives an explicit formula for 
the average of a $k$-fold %
sum over $L$, i.e.
\begin{align*}
\int_{X_n}\sum_{\vecv_1,\ldots,\vecv_k\in L}\rho(\vecv_1,\ldots,\vecv_k)\,d\mu(L)
\end{align*}
for any function $\rho$ on $(\R^n)^k$
subject to suitable conditions.
See Theorem \ref{ROGERSFORMULATHM} below
for a precise statement.
Rogers' mean value formula has found a number of applications over the years;
see, e.g.,
\cite{cR56},
\cite{wS58b},
\cite{pSaS2006},
\cite{aS2011o},
\cite{jAgM2009},
\cite{mBaG2018},
\cite{aSaS2016}.
In recent years many variants of Rogers' formula have been developed,
with several new applications;
see, e.g.,
\cite{dKsY2018},
\cite{aGdKsY2019},
\cite{jH2019},
\cite{mAaGsY2020},
\cite{mAaGjH2021}.

Our main goal in the present paper is to prove a generalization of
Rogers' mean value formula,
giving an explicit formula for the average of a multiple sum over $L$ 
as well as the \textit{dual} lattice $L^*$, i.e.
\begin{align*}
&\int_{X_n}
\sum_{\vecv_1,\ldots,\vecv_{k_1}\in L}\sum_{\vecw_1,\ldots,\vecw_{k_2}\in L^*}
\rho(\vecv_1,\ldots,\vecv_{k_1},\vecw_1,\ldots,\vecw_{k_2})\,d\mu(L).
\end{align*}
Recall that the dual of a (full-rank) lattice $L$ in $\R^n$ is defined by
\begin{align*}
L^*=\{\vecw\in\R^n\col \vecw\cdot\vecv\in\Z\hspace{7pt}\forall\vecv\in L\},
\end{align*}
where the dot %
denotes the standard scalar product on $\R^n$.
The covolume of $L^*$ equals the inverse of that of $L$: %
$\vol(\R^n/L^*)=\vol(\R^n/L)^{-1}$;
in particular $L^*\in X_n$ for any $L\in X_n$.
In fact, under our identification $X_n=G/\Gamma$, the map
$L\mapsto L^*$ on $X_n$ is given by $g\Gamma\mapsto g^{-\mathsf{T}}\Gamma$,
where $g^{-\mathsf{T}}:=(g^{-1})\transp=(g\transp)^{-1}$
(this notation will be used throughout the paper).
The study of inequalities relating invariants of $L$ and $L^*$
is a classical topic within the geometry of numbers; %
see, e.g., 
\cite{wB93}
and the references therein.

\vspace{5pt}

To prepare for the statement of our main result, we introduce some further notation.
Throughout the paper, we will identify vectors in $\R^n$ with $n\times 1$ column matrices.
More generally, for any $m\geq1$,
we will find it convenient to identify the space
$M_{n,m}(\R)$ of $n\times m$ matrices
with the space $(\R^n)^m$ of $m$-tuples of vectors in $\R^n$
(by listing the column vectors of any matrix $x\in M_{n,m}(\R)$ in order from left to right).
For $m\leq n$ and $x\in M_{n,m}(\R)$, we write
\begin{align}\label{dddef}
\dd(x):=\sqrt{\det(x\transp x)}.
\end{align}
Note that under our identification $M_{n,m}(\R)=(\R^n)^m$,
$\dd(x)$ equals the 
$m$-dimensional volume of the
parallelotope in $\R^n$ spanned by the vectors in $x$. %
In particular $\dd(x)>0$ if and only if $x$ has rank $m$.
We write
\begin{align}\label{Umdef}
U_m:=\{x\in M_{n,m}(\R)\col \dd(x)>0\};
\end{align}
this is a dense open subset of $M_{n,m}(\R)$.

Given any positive integers $m_1,m_2$ satisfying 
$m:=m_1+m_2<n$,
and given any $\beta\in M_{m_1,m_2}(\R)$, we set
\begin{align}\label{SbetaDEF2}
S(\beta):=%
\{\langle x,y\rangle\in U_{m_1}\times U_{m_2}\col x\transp y=\beta\}.
\end{align}
This is a closed, regular submanifold of
$U_{m_1}\times U_{m_2}$ of dimension
$mn-m_1m_2$,
since the map $\langle x,y\rangle\mapsto x\transp y$ from
$U_{m_1}\times U_{m_2}$ to $M_{m_1,m_2}(\R)$
has everywhere full rank $m_1m_2$.
We equip $S(\beta)$ with a natural Borel
measure $\eta_\beta$ as follows.
For any $x\in U_{m_1}$, we set
\begin{align}\label{SbetapDEF}
S(\beta)'_x:=\{y\in M_{n,m_2}(\R)\col x\transp y=\beta\}
\end{align}
and 
\begin{align}
S(\beta)_x:=S(\beta)_x'\cap U_{m_2}=\{y\in M_{n,m_2}(\R)\col \langle x,y\rangle\in S(\beta)\}.
\end{align}
Note that
$S(\beta)'_x$ is an affine linear subspace of $M_{n,m_2}(\R)$ of dimension
$m_2(n-m_1)$;
we equip $S(\beta)'_x$ with its structure as Euclidean subspace of
$M_{n,m_2}(\R)\cong\R^{nm_2}$ with its standard Euclidean structure,
and let 
$\eta_{\beta,x}$ be the corresponding $m_2(n-m_1)$-dimensional Lebesgue volume measure on
$S(\beta)_x'$.
Note that $S(\beta)_x'\setminus S(\beta)_x$ has measure zero with respect to $\eta_{\beta,x}$;
we write $\eta_{\beta,x}$ also for the restriction of $\eta_{\beta,x}$ to $S(\beta)_x$.
Finally we define, for any Borel set $E\subset S(\beta)$:
\begin{align}\label{NUbetaDEFnew}
\eta_\beta(E):=\int_{U_{m_1}}\int_{S(\beta)_x}\chi_E(x,y)\,d\eta_{\beta,x}(y)\,\frac{dx}{\dd(x)^{m_2}},
\end{align}
where $dx$ is the standard $nm_1$-dimensional Lebesgue measure on
$M_{n,m_1}(\R)$, restricted to $U_{m_1}$.

For any $B\in M_{k,m}(\R)$, we write
\begin{align*}
V_B:=B\R^m=\{B\vecv\col\vecv\in\R^m\}
\end{align*}
for the image of $B$ viewed as a linear map from $\R^m$ to $\R^k$.
Next, for any %
$k\geq m$,
we let $M_{k,m}(\Z)^*$  \label{MZstardef}
be the set of all $B\in M_{k,m}(\Z)$
satisfying $\dd(B)>0$ and 
$V_B\cap\Z^k=B\Z^m=\{B\vecv\col\vecv\in\Z^m\}$.
The group $\GL_m(\Z)$ acts on $M_{k,m}(\Z)^*$ by
multiplication from the right.
We fix %
a subset $A_{k,m}\subset M_{k,m}(\Z)^*$ 
containing exactly one representative from each \label{Akmspec}
orbit in $M_{k,m}(\Z)^*/\GL_m(\Z)$.
Then, under our identification of $M_{k,m}(\R)$ with $(\R^k)^m$,
$M_{k,m}(\Z)^*$ is the set of all $m$-tuples of vectors in $\Z^k$
which span a primitive $m$-dimensional sublattice of $\Z^k$
(a sublattice $L\subset\Z^k$ is said to be primitive if $\Span_{\R}(L)\cap\Z^k=L$).
Furthermore, two matrices $B,B'\in M_{k,m}(\Z)^*$ lie in the same $\GL_m(\Z)$-orbit
if and only if $V_B=V_{B'}$.
It follows that the map $B\mapsto V_B$ gives a bijection from $A_{k,m}$ onto the set of
\textit{rational} $m$-dimensional linear subspaces of $\R^k$
(a linear subspace $V\subset\R^k$ is said to be rational if 
$V=\Span_{\R}(V\cap\Z^k)$). %

For any $\beta\in M_{m_1,m_2}(\Z)$ we define a 'weight' $W(\beta)$ as follows:
Let $\fW_{m_1}$ be the set of matrices $A=(a_{ij})\in M_{m_1,m_1}(\Z)$ such that
$a_{ij}=0$ for all $1\leq j<i\leq m_1$,
$a_{ii}>0$ for all $1\leq i\leq m_1$,
and $0\leq a_{ij}<a_{jj}$ for all $1\leq i<j\leq m_1$.\label{fWdef}
Let $\fW_\beta$ be the following subset:
\begin{align}\label{fWbetadef}
\fW_\beta=\{A\in\fW_{m_1}\col A^{-\mathsf{T}}\beta\in M_{m_1,m_2}(\Z)\}.\hspace{10pt} %
\end{align}
Finally set:
\begin{align}\label{Wdef}
W(\beta):=\frac{\sum_{A\in \fW_{\beta}}(\det A)^{m_2-n}}{\zeta(n)\zeta(n-1)\cdots\zeta(n-m_1+1)}.
\end{align}

We are now ready to state the main result of the paper:
\pagebreak
\begin{thm}\label{MAINTHEOREMabs2}
Let $k_1,k_2\in\Z^+$, $n>k_1+k_2$,
and let $\rho:(\R^n)^{k_1}\times(\R^n)^{k_2}\to\R_{\geq0}$
be a non-negative Borel measurable function 
(which we identify with a function 
$M_{n,k_1}(\R)\times M_{n,k_2}(\R)\to\R_{\geq0}$).
Then
\begin{align}\label{MAINTHEOREMabs2res}
&\int_{X_n}
\sum_{\vecv_1,\ldots,\vecv_{k_1}\in L}\sum_{\vecw_1,\ldots,\vecw_{k_2}\in L^*}
\rho(\vecv_1,\ldots,\vecv_{k_1},\vecw_1,\ldots,\vecw_{k_2})\,d\mu(L)
\\\notag
&=\sum_{m_1=1}^{k_1}\sum_{B_1\in A_{k_1,m_1}}\sum_{m_2=1}^{k_2}\sum_{B_2\in A_{k_2,m_2}}
\sum_{\beta\in M_{m_1,m_2}\!(\Z)} W(\beta)
\int_{S(\beta)}\rho\bigl(xB_1^{\mathsf{T}},yB_2^{\mathsf{T}}\bigr)\,d\eta_\beta(x,y)
\\\notag
&\hspace{10pt}
+\sum_{m_1=1}^{k_1}\sum_{B_1\in A_{k_1,m_1}}\int_{M_{n,m_1}(\R)}\rho\bigl(xB_1^{\mathsf{T}},0\bigr)\,dx
+\sum_{m_2=1}^{k_2}\sum_{B_2\in A_{k_2,m_2}}\int_{M_{n,m_2}(\R)}\rho\bigl(0,yB_2^{\mathsf{T}}\bigr)\,dy
+\rho(0,0),
\end{align}
where both sides are finite whenever $\rho$ is bounded and has bounded support.
\end{thm}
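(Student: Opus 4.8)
The plan is to reduce to a smooth, compactly supported $\rho$ and then apply Poisson summation to replace the sum over $L^{*}$ by a sum over $L$, reducing \eqref{MAINTHEOREMabs2res} to Rogers' formula (Theorem \ref{ROGERSFORMULATHM}) for a single lattice. First, both sides of \eqref{MAINTHEOREMabs2res} are finite when $\rho$ is bounded with bounded support (only finitely many $\beta$ contribute, $W(\beta)$ is bounded, and the $\eta_\beta$-measure of a bounded set is finite — the last point using $n>k_1+k_2$), and by a standard approximation argument (uniform approximation, the monotone class theorem, and monotone convergence, using that both sides are linear in $\rho$) it suffices to prove \eqref{MAINTHEOREMabs2res} for $\rho\in C_c^\infty$. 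For such $\rho$, Poisson summation in the last $k_2$ variables gives, since $\vol(\R^n/L^*)=1$ and $(L^*)^*=L$,
\begin{align*}
\sum_{\vecw_1,\dots,\vecw_{k_2}\in L^*}\rho(\vecv_1,\dots,\vecv_{k_1},\vecw_1,\dots,\vecw_{k_2})
=\sum_{\vecv_1',\dots,\vecv_{k_2}'\in L}\hrho(\vecv_1,\dots,\vecv_{k_1},\vecv_1',\dots,\vecv_{k_2}'),
\end{align*}
where $\hrho$ is the Fourier transform of $\rho$ in its last $k_2$ arguments. Hence the left-hand side of \eqref{MAINTHEOREMabs2res} equals $\int_{X_n}\sum_{\vecv_1,\dots,\vecv_{k_1},\,\vecv_1',\dots,\vecv_{k_2}'\in L}\hrho\,d\mu(L)$, a $(k_1+k_2)$-fold sum over $L$; since $n>k_1+k_2$, Rogers' formula applies to the Schwartz function $\hrho$ and rewrites it as
\begin{align*}
\hrho(\bn,\bn)+\sum_{m=1}^{k_1+k_2}\;\sum_{B\in A_{k_1+k_2,m}}\int_{M_{n,m}(\R)}\hrho\bigl(z{B'}^{\mathsf{T}},z{B''}^{\mathsf{T}}\bigr)\,dz,
\end{align*}
where $B=\scmatr{B'}{B''}$ with $B'\in M_{k_1,m}(\Z)$ and $B''\in M_{k_2,m}(\Z)$.

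The main task is then to re-express each term $\int_{M_{n,m}(\R)}\hrho(z{B'}^{\mathsf{T}},z{B''}^{\mathsf{T}})\,dz$ through $\rho$ and to reassemble. Inserting $\hrho(\vecv,\vecv')=\int_{M_{n,k_2}(\R)}\rho(\vecv,W)\,\e^{-2\pi\i\,\Tr({\vecv'}^{\mathsf{T}}W)}\,dW$ turns the term into an integral over $z\in M_{n,m}(\R)$ and $W\in M_{n,k_2}(\R)$. The column spaces of ${B'}^{\mathsf{T}}$ and ${B''}^{\mathsf{T}}$ are rational subspaces of $\R^m$; integrating out the components of $z$ on which $\rho(z{B'}^{\mathsf{T}},W)$ does not depend yields, by Fourier inversion, a Dirac constraint confining $W$ to a rational subspace of $M_{n,k_2}(\R)$ determined by $B$, i.e.\ $W=yB_2^{\mathsf{T}}$ for a unique $B_2\in A_{k_2,m_2}$ (with $m_2$ fixed by the position of $B$ relative to $\R^{k_1+k_2}=\R^{k_1}\oplus\R^{k_2}$) and $y\in M_{n,m_2}(\R)$. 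After this, a further change of variables $x\mapsto xA_1^{\mathsf{T}}$ — where the integer matrix $A_1$ records the deviation of $B'$ from a primitive frame spanning the same subspace, contributing a Jacobian power of $\det A_1$ — reduces the first slot to $xB_1^{\mathsf{T}}$ with $B_1\in A_{k_1,m_1}$ and $x\in M_{n,m_1}(\R)$, and leaves the integrand equal to $\rho(xB_1^{\mathsf{T}},yB_2^{\mathsf{T}})$ times a unit-modulus exponential whose phase is linear in the entries of $x^{\mathsf{T}}y$. Summing over the remaining integer data in $B$ (the ``linking'' matrix) is a Poisson summation collapsing this exponential into a Dirac comb that localizes $x^{\mathsf{T}}y$ to $M_{m_1,m_2}(\Z)$; hence the term contributes, for each $\beta\in M_{m_1,m_2}(\Z)$, an integral of $\rho(xB_1^{\mathsf{T}},yB_2^{\mathsf{T}})$ over $S(\beta)$, and it remains to check that the induced measure is exactly $\eta_\beta$ — the weight $\dd(x)^{-m_2}$ and the Lebesgue slices $\eta_{\beta,x}$ being precisely the Jacobians of these substitutions together with the normalizations of the two Dirac constraints.

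Finally one sums the accumulated numerical constants over all $B\in A_{k_1+k_2,m}$ (and all $m\geq1$) that give a fixed $(m_1,m_2,B_1,B_2,\beta)$. Via the bijection between $A_{k,m}$ and rational $m$-dimensional subspaces of $\R^k$, this is a weighted count of rational subspaces of $\R^{k_1+k_2}$ with a prescribed position relative to $\R^{k_1}\oplus\R^{k_2}$ and a prescribed linking form, i.e.\ a lattice-point count, and it yields exactly $W(\beta)$: the count over primitive column lattices produces, via Dirichlet-series identities of the form $\sum_{\Lambda\subseteq\Z^r}[\Z^r:\Lambda]^{-s}=\zeta(s)\cdots\zeta(s-r+1)$, the denominator $\zeta(n)\zeta(n-1)\cdots\zeta(n-m_1+1)$, while the elementary-divisor data $A_1$, forced by integrality of the linking form to lie in $\fW_\beta$ and weighted by its Jacobian, contributes $\sum_{A\in\fW_\beta}(\det A)^{m_2-n}$. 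The contributions with $B'=0$ or $B''=0$ (including the term $\hrho(\bn,\bn)$) are treated by the same mechanism and reassemble into the last three terms of \eqref{MAINTHEOREMabs2res}; alternatively they follow at once from Rogers' formula applied to the $k_2$-fold sum over $L^*$, respectively the $k_1$-fold sum over $L$, using that $L\mapsto L^*$ preserves $\mu$ (it is induced by the Haar-preserving automorphism $g\mapsto g^{-\mathsf{T}}$ of $G$, which stabilizes $\Gamma$). Applying Poisson summation in the first $k_1$ variables instead gives the same identity with $(k_1,m_1)$ and $(k_2,m_2)$ interchanged — a useful consistency check.

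The main obstacle is the ``un-transforming'' of the last two paragraphs: performing the changes of variables on $M_{n,m}(\R)$ and the Dirac-constraint evaluations with exact control of all Jacobians so that $\eta_\beta$ appears in its stated form, and above all carrying out the weighted subspace count so that it equals $W(\beta)$ on the nose — with the correct zeta-quotient and the precise condition $A^{-\mathsf{T}}\beta\in M_{m_1,m_2}(\Z)$ defining $\fW_\beta$. A route avoiding Poisson summation — and hence any regularity hypothesis on $\rho$ beyond Borel measurability and non-negativity — is a direct computation on $X_n=G/\Gamma$: writing $L=g\Z^n$, a combinatorial bijection parametrizing a rank-$m$ tuple of vectors in a lattice by its ``type'' $B\in A_{k,m}$ together with a sublattice $Y\Z^m$ of that lattice (valid because the integer relations among lattice vectors form a primitive sublattice of $\Z^k$) reduces matters to evaluating $\int_{X_n}\sum_{Y_1:\,Y_1\Z^{m_1}\subseteq L}\sum_{Y_2:\,Y_2\Z^{m_2}\subseteq L^*}f(Y_1,Y_2)\,d\mu(L)$; substituting $Y_1=gZ_1$, $Y_2=g^{-\mathsf{T}}Z_2$ with integer matrices $Z_1,Z_2$ and unfolding the $G/\Gamma$-integral over the $\Gamma$-orbits of pairs $(Z_1,Z_2)$ for the twisted action $\gamma\cdot(Z_1,Z_2)=(\gamma Z_1,\gamma^{-\mathsf{T}}Z_2)$ leads to the same core computation, now with the orbit invariant $\beta=Z_1^{\mathsf{T}}Z_2$ and the elementary-divisor data $A\in\fW_\beta$ appearing explicitly. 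In either approach every interchange of summation and integration is justified by Tonelli's theorem since $\rho\geq0$ (in the Poisson route one has absolute convergence throughout after the reduction to $C_c^\infty$).
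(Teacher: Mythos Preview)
Your strategy is sound and in fact coincides with the paper's \emph{alternative} proof (Section~\ref{proof1sec}): apply Poisson summation to turn the $L^*$-sum into an $L$-sum, invoke Rogers' formula for the resulting $(k_1+k_2)$-fold sum, and then ``un-transform'' the right-hand side term by term. The paper carries out exactly the two steps you flag as the main obstacle: the weighted count that produces $W(\beta)$ is Lemma~\ref{LINALGlem} (an identity for $\sum_{A\in\fW,\,A\theta\equiv0\bmod q}(\det A)^{-n}$ over $\Z/q\Z$), and the reassembly of the $B\in A_{k_1+k_2,m}$ data into a triple $(B_1,B_2,\alpha)$ with $\alpha$ encoding the linking matrix is Lemma~\ref{B1B2alphabijLEM}, which also supplies the Jacobian $|\det J|=N_\xi/q_\xi^{m_1}$ needed to match your Dirac-constraint normalizations to $\eta_\beta$. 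So your outline is correct, but you should be aware that these two lemmas are where the real work lies, and your description of them is not yet a proof.

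It is worth noting that the paper's \emph{main} proof (Section~\ref{DIRECTPROOFsec}) is quite different from your approach and from your suggested $\Gamma$-orbit unfolding. After the same initial reduction to linearly independent tuples, it decomposes the sum according to the value $\beta=x\transp y\in M_{m_1,m_2}(\Z)$ and observes that each piece defines a $G$-invariant positive functional $F_\beta$ on $C_c(S(\beta))$ for the action $g\cdot\langle x,y\rangle=\langle gx,g^{-\mathsf{T}}y\rangle$. Since $G$ acts transitively on $S(\beta)$ (Lemma~\ref{GtransitiveonSbetaLEM}), the invariant measure is unique up to scalar, so $F_\beta=c_\beta\,\eta_\beta$; the constant $c_\beta$ is then computed by a limiting argument (Proposition~\ref{cbetaexplPROP}), letting the $y$-ball radius tend to infinity so that the inner sum becomes a lattice-point count and the answer reduces to Theorem~\ref{ROGERSbasicformulaTHM}. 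This route avoids Fourier analysis entirely and explains structurally why the same zeta-product appears for every $\beta$; your Poisson approach, by contrast, is more computational but has the virtue of deriving the formula rather than verifying it.
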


\begin{remark}\label{emptymatricesREM}
By introducing certain natural conventions concerning 
``empty matrices of dimensions $a\times 0$ and $0\times a$'' (any $a\geq0$),
and agreeing that for any $k\geq1$, $A_{k,0}$ is the singleton set containing the 
empty matrix of dimension $k\times 0$, 
one may incorporate
the last line of \eqref{MAINTHEOREMabs2res} in the double sum,
so that the \textit{whole} of the right-hand side of \eqref{MAINTHEOREMabs2res}
can be expressed as  %
\begin{align}\label{MAINTHEOREMabs2resALT}
\sum_{m_1=0}^{k_1}\sum_{B_1\in A_{k_1,m_1}}\sum_{m_2=0}^{k_2}\sum_{B_2\in A_{k_2,m_2}}
\sum_{\beta\in M_{m_1,m_2}\!(\Z)} W(\beta)
\int_{S(\beta)}\rho\bigl(xB_1^{\mathsf{T}},yB_2^{\mathsf{T}}\bigr)\,d\eta_\beta(x,y).
\end{align}
\end{remark}

\begin{remark}
It follows from the statement of Theorem \ref{MAINTHEOREMabs2}
that the first sum in the right-hand side of \eqref{MAINTHEOREMabs2res} is independent of the
choice of the sets of representatives
$A_{k_1,m_1}$ and $A_{k_2,m_2}$.
This fact is also fairly easy to verify directly.
Indeed, note that it suffices to verify that
for any given $B_1\in M_{k_1,m_1}(\Z)^*$ and
$B_2\in M_{k_2,m_2}(\Z)^*$, 
the sum
\begin{align*}
\sum_{\beta\in M_{m_1,m_2}\!(\Z)} W(\beta)\int_{S(\beta)}\rho\bigl(xB_1^{\mathsf{T}},yB_2^{\mathsf{T}}\bigr)\,d\eta_\beta(x,y)
\end{align*}
remains the same when $B_1$ and $B_2$ are replaced by $B_ 1\gamma_1$ and $B_ 2\gamma_2$,
respectively, for any $\gamma_1\in\GL_{m_1}(\Z)$ and $\gamma_2\in\GL_{m_2}(\Z)$.
This claim follows from Lemma \ref{etabetatransfLEM} below,
together with the fact that
the map $\beta\mapsto \gamma_1\beta\gamma_2\transp $ is a bijection of
$M_{m_1,m_2}(\Z)$ onto itself
which preserves the weight $W$;
cf.\ Lemma \ref{Winvlem}.
\end{remark}

\begin{remark}\label{DUALsymmrem}
The map $L\mapsto L^*$ is a diffeomorphism of $X_n$ onto itself which preserves the measure $\mu$;
hence the left-hand side of the formula \eqref{MAINTHEOREMabs2res}
is %
invariant under replacing $\langle k_1,k_2,\rho\rangle$ by
$\langle k_2,k_1,\trho\,\rangle$ where $\trho(x,y):=\rho(y,x)$.
In the right-hand side of \eqref{MAINTHEOREMabs2res}
this invariance %
is directly visible via %
the following symmetry relations,
which we prove in Section \ref{betatranspsymmSEC} below:
For any $m_1,m_2\in\Z^+$ and $\beta\in M_{m_1,m_2}(\Z)$
we have $W(\beta\transp)=W(\beta)$
and $\eta_{\beta\transp}=J_*\eta_\beta$, where $J$ is the diffeomorphism
$\langle x,y\rangle\mapsto\langle y,x\rangle$ from $S(\beta)$ onto $S(\beta\transp)$.
\end{remark}

The formula in Theorem \ref{MAINTHEOREMabs2}
is also valid
if either $k_1=0$ or $k_2=0$, if  appropriately interpreted.
Indeed, in this case the formula becomes the following
slightly reformulated version of 
Rogers' original formula:
\begin{thm}\label{ROGERSFORMULATHMp}
(Rogers \cite{cR55}.)
Let $1\leq k<n$ and let $\rho:(\R^n)^k\to\R_{\geq0}$
be a non-negative Borel measurable function.
Then
\begin{align}\label{ROGERSFORMULATHMprimres}
\int_{X_n}\sum_{\vecv_1,\ldots,\vecv_k\in L}\rho(\vecv_1,\ldots,\vecv_k)\,d\mu(L)
=\sum_{m=1}^k\sum_{B\in A_{k,m}}\int_{M_{n,m}(\R)}\rho(xB\transp)\,dx
+\rho(0).
\end{align}
\end{thm}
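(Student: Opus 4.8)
The statement is a mild reformulation of Rogers' theorem \cite{cR55}, and it is also the specialization $k_2=0$ (equivalently $k_1=0$) of Theorem~\ref{MAINTHEOREMabs2}, read through the empty-matrix conventions of Remark~\ref{emptymatricesREM}; so one option is simply to invoke that theorem. For a self-contained argument I would instead follow Rogers' original strategy, which rests on two ingredients: a mean value formula for sums over \emph{primitive} tuples in $L$, and the classical Dirichlet series counting finite-index sublattices of $\Z^m$. Here call an ordered $m$-tuple $(\vecu_1,\dots,\vecu_m)$ of vectors of $L$ \emph{primitive} if it is a $\Z$-basis of the sublattice $L\cap\Span_\R(\vecu_1,\dots,\vecu_m)$; equivalently, if the $\vecu_i$ are linearly independent and $\Z\vecu_1+\dots+\Z\vecu_m=\Span_\R(\vecu_1,\dots,\vecu_m)\cap L$.

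First I would decompose the $k$-fold sum by the rank and ``type'' of the tuple. A tuple $(\vecv_1,\dots,\vecv_k)\in L^k$ either is $0$ (contributing $\rho(0)$) or spans a subspace $V$ of some dimension $m\in\{1,\dots,k\}$ (recall $k<n$); writing the $\vecv_i$ in a $\Z$-basis of $L\cap V$ factors $(\vecv_1,\dots,\vecv_k)=(\vecu_1,\dots,\vecu_m)B\transp$ with $(\vecu_1,\dots,\vecu_m)$ primitive in $L$ and $B\in M_{k,m}(\Z)$ of rank $m$. Since $\GL_m(\Z)$ acts freely on rank-$m$ integer matrices, changing the basis of $L\cap V$ replaces $B$ by $B\gamma$, $\gamma\in\GL_m(\Z)$; hence each rank-$m$ tuple has a well-defined type $[B]\in\{B\in M_{k,m}(\Z):\rank B=m\}/\GL_m(\Z)$, for a fixed representative $B$ of $[B]$ the primitive $m$-tuple $(\vecu_1,\dots,\vecu_m)$ with $(\vecu_1,\dots,\vecu_m)B\transp=(\vecv_1,\dots,\vecv_k)$ is unique, and conversely $(\vecu_1,\dots,\vecu_m)\mapsto(\vecu_1,\dots,\vecu_m)B\transp$ is a bijection from the primitive $m$-tuples of $L$ onto the tuples of type $[B]$. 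Since $\rho\geq0$, Tonelli's theorem then gives
\[
\int_{X_n}\sum_{\vecv_1,\dots,\vecv_k\in L}\rho\,d\mu
=\rho(0)+\sum_{m=1}^{k}\ \sum_{[B]}\ \int_{X_n}\sum_{(\vecu_1,\dots,\vecu_m)\text{ primitive in }L}\rho\bigl((\vecu_1,\dots,\vecu_m)B\transp\bigr)\,d\mu,
\]
with one representative $B$ per type.

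Into this I would feed the primitive-tuple mean value formula: for $1\leq m<n$ and non-negative Borel $f$ on $M_{n,m}(\R)$,
\[
\int_{X_n}\sum_{(\vecu_1,\dots,\vecu_m)\text{ primitive in }L}f(\vecu_1,\dots,\vecu_m)\,d\mu(L)=\frac{1}{\zeta(n)\zeta(n-1)\cdots\zeta(n-m+1)}\int_{M_{n,m}(\R)}f(x)\,dx,
\]
which I would prove by induction on $m$. The case $m=1$ is the primitive-vector form of Siegel's formula~\eqref{SIEGELintformula} (M\"obius inversion over the content of a vector), and the inductive step uses the standard fibration of $X_n$ over the space of primitive $(m-1)$-dimensional sublattices: the inductive hypothesis handles the averaging over the base, while over a fixed such sublattice $\Lambda_0=\Z\vecu_1+\dots+\Z\vecu_{m-1}$ (spanning a subspace $V'$) the remaining sum over $\vecu_m$ splits, on lifting from $L/\Lambda_0$, into a primitive-vector Siegel integral in the $(n-m+1)$-dimensional quotient $\R^n/V'$, which supplies the new factor $\zeta(n-m+1)^{-1}$. (This formula is classical and could instead be quoted from \cite{cR55},\cite{aMcR58a}; its denominator is exactly the one appearing in the weight $W$ of Theorem~\ref{MAINTHEOREMabs2}.)

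Combining the two displays, the left side of \eqref{ROGERSFORMULATHMprimres} equals $\rho(0)+\sum_{m=1}^{k}\bigl(\zeta(n)\cdots\zeta(n-m+1)\bigr)^{-1}\sum_{[B]}\int\rho(xB\transp)\,dx$, so it remains to collapse the sum over types. The primitive closure of $B\Z^m$ in $\Z^k$ equals $\widehat B\Z^m$ for a unique $\widehat B\in A_{k,m}$, and in the factorization $B=\widehat B C$ the integer matrix $C$ (with $\det C\neq0$) is determined up to right $\GL_m(\Z)$ by the sublattice $\Lambda=C\Z^m\subseteq\Z^m$; this sets up a bijection $[B]\leftrightarrow(\widehat B,\Lambda)$. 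The substitution $x\mapsto xC\transp$ has Jacobian $|\det C|^{n}=[\Z^m:\Lambda]^{n}$, so $\int\rho(xB\transp)\,dx=[\Z^m:\Lambda]^{-n}\int\rho(x\widehat B\transp)\,dx$, and summing over $\Lambda$ the classical identity $\sum_{\Lambda\subseteq\Z^m}[\Z^m:\Lambda]^{-s}=\zeta(s)\zeta(s-1)\cdots\zeta(s-m+1)$ at $s=n$ (convergent since $m\le k<n$) exactly cancels the denominator $\zeta(n)\cdots\zeta(n-m+1)$, leaving $\sum_{\widehat B\in A_{k,m}}\int\rho(x\widehat B\transp)\,dx$; summing over $m$ yields \eqref{ROGERSFORMULATHMprimres}. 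Finiteness for bounded $\rho$ of bounded support follows from the standard estimate bounding $\sum_{\widehat B\in A_{k,m}}\int\rho(x\widehat B\transp)\,dx$ by $\|\rho\|_\infty$ times a convergent series over the maximal minors of the $\widehat B$. The main obstacle is the primitive-tuple mean value formula — specifically, constructing the fibration of $X_n$ over the space of primitive $(m-1)$-dimensional sublattices and verifying that the invariant-measure normalizations factor so as to produce precisely $\zeta(n-m+1)^{-1}$ at each inductive step; the type decomposition and the zeta collapse are then essentially bookkeeping, kept clean by the non-negativity of $\rho$, which legitimizes every interchange of sum and integral.
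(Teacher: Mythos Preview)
Your argument is correct, but it takes a genuinely different route from the paper's. The paper does not prove Theorem~\ref{ROGERSFORMULATHMp} from scratch; it simply shows that the stated identity is a \emph{reformulation} of Rogers' original formula (Theorem~\ref{ROGERSFORMULATHM}), which is taken as a black box. Concretely, for each matrix $D\in\fA(m)$ appearing in Rogers' sum, the paper uses the Smith normal form $D=\gamma_1\diag[\ve_1,\dots,\ve_m](I_m\ 0)\gamma_2$, proves the small Lemma~\ref{ELEMDIVBASIClem} that $\ve_i\mid q(D)$ (so that Rogers' factors $e_i/q$ simplify to $\ve_i/q$), and then shows by a linear substitution that Rogers' term for $D$ equals $\int_{M_{n,m}(\R)}\rho(yB\transp)\,dy$ with $B=\gamma_2'(D)\in M_{k,m}(\Z)^*$; finally, $D\mapsto\gamma_2'(D)$ is checked to give a bijection onto a valid choice of $A_{k,m}$.

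By contrast, you bypass Rogers' somewhat intricate $D$-parametrization entirely: you decompose the $k$-fold sum by the $\GL_m(\Z)$-type of the tuple, apply the primitive-tuple mean value formula (the paper's Theorem~\ref{ROGERSbasicformulaTHM}), and then collapse the type sum via $B=\widehat BC$ and the sublattice zeta identity $\sum_{\Lambda\subset\Z^m}[\Z^m:\Lambda]^{-n}=\zeta(n)\cdots\zeta(n-m+1)$. This is essentially a direct proof of Rogers' formula in its clean form, and it has the advantage of making the role of the zeta factors transparent; the paper's translation proof is shorter only because it outsources all the analytic work to \cite{cR55}. One caution: your opening remark that Theorem~\ref{ROGERSFORMULATHMp} is ``the specialization $k_2=0$ of Theorem~\ref{MAINTHEOREMabs2}'' is true as a statement, but invoking it as a proof would be circular in this paper, since the proof of Theorem~\ref{MAINTHEOREMabs2} in Section~\ref{DIRECTPROOFsec} explicitly uses Theorem~\ref{ROGERSFORMULATHMp} (and Theorem~\ref{ROGERSbasicformulaTHM}) as inputs.
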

(We discuss in Section \ref{ROGERSFORMULAsec} the translation between
the statement %
in \cite{cR55} and 
Theorem \ref{ROGERSFORMULATHMp}.)

\vspace{5pt}

Next, we discuss some applications of our main theorem.

\vspace{3pt}

Our main motivation for developing the formula in 
Theorem \ref{MAINTHEOREMabs2}
comes from questions concerning
the Epstein zeta function of a random lattice $L\in X_n$ as $n\to\infty$;
cf.\ \cite{pSaS2006, epstein1, epstein2, aSaS2016}.
Recall that for $\Re s>\frac n2$ and $L\in X_n$ the Epstein zeta function is defined by the absolutely convergent series
\begin{equation*}
E_n(L,s):=\sum_{\vecm\in L\setminus\{\bn\}}|\vecm|^{-2s}.
\end{equation*}  
The function $E_n(L,s)$ can be meromorphically continued to $\C$ and satisfies a functional equation of
"Riemann type" relating $E_n(L,s)$ and $E_n(L^*,\frac n2-s)$. 
An outstanding question from \cite{epstein2} is whether $E_n(L,s)$
for $s$ on or near the central point $s=\frac n4$, possesses,
after appropriate normalization, a limit distribution as $n\to\infty$?
It seems clear that Theorem \ref{MAINTHEOREMabs2}
should be an important ingredient when seeking to
extend the methods of
\cite{epstein2} to handle this question.
We hope to return to these matters in future work.

\vspace{5pt}

Finally, we describe an application of Theorem \ref{MAINTHEOREMabs2}
to the limit distribution of the shortest vector lengths of %
$L$ and $L^*$,
for $L$ random in $(X_n,\mu)$, as the dimension $n$ tends to infinity. %
Given a lattice $L\in X_n$, 
we order its non-zero vectors by increasing lengths as $\pm\vecv_1,\pm\vecv_2,\pm\vecv_3,\ldots$ and define, for each $j\geq1$,
\begin{equation*}
\mathcal V_j(L):=\fV_n|\vecv_j|^n,
\end{equation*} 
where $\fV_n$ denotes the volume of the unit ball in $\R^n$. 
In \cite{aS2011o}, the second author of the present paper
proved that for $L$ random in $(X_n,\mu)$,
the sequence $\{\mathcal V_j(L)\}_{j=1}^{\infty}$ converges in distribution, as $n\to\infty$, 
to the points of a Poisson process 
on $\R^+$ with constant intensity $\frac{1}{2}$.  
Of course the same limit result holds for the sequence
$\{\mathcal V_j(L^*)\}_{j=1}^{\infty}$,
since the map $L\mapsto L^*$ preserves the measure $\mu_n$.
The following result generalizes these facts by describing the \textit{joint} limiting distribution
of the normalized vector lengths of both $L$ and $L^*$.

\begin{thm}\label{JOINTPOISSONTHEOREM}
For $L$ random in $(X_n,\mu)$,
the two sequences $\{\mathcal V_j(L)\}_{j=1}^{\infty}$
and $\{\mathcal V_j(L^*)\}_{j=1}^{\infty}$ converge jointly in distribution, 
as $n\to\infty$, to the sequences $\{ T_j\}_{j=1}^{\infty}$
and $\{ T_j'\}_{j=1}^{\infty}$, where $0<T_1<T_2<T_3<\cdots$
and $0<T_1'<T_2'<T_3'<\cdots$ denote the points of two independent 
Poisson processes on $\R^+$ with constant intensity $\frac{1}{2}$.  
\end{thm}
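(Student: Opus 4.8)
Below is the approach I would take to prove Theorem~\ref{JOINTPOISSONTHEOREM}.

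The plan is to establish the joint Poisson convergence by the method of moments for simple point processes, using Theorem~\ref{MAINTHEOREMabs2} to evaluate the relevant mixed moments. Fix finitely many pairwise disjoint bounded intervals $I_1,\dots,I_p$ and $J_1,\dots,J_q$ in $\R^+$, and for $L\in X_n$ write $N_L(I):=\#\{j\geq1\col\mathcal V_j(L)\in I\}=\tfrac12\#\{\vecv\in L\setminus\{\bn\}\col\fV_n|\vecv|^n\in I\}$. Since the two limiting processes are simple with diffuse intensity, it suffices to show that the vector with entries $N_L(I_s)$ and $N_{L^*}(J_t)$ converges in distribution, as $n\to\infty$, to a vector of independent Poisson variables with means $\tfrac12\Leb(I_s)$ and $\tfrac12\Leb(J_t)$; by the multivariate method of moments this in turn follows once one proves, for every choice of non-negative integers $k_1,\dots,k_p,l_1,\dots,l_q$,
\begin{align*}
\int_{X_n}\prod_s\bigl(N_L(I_s)\bigr)_{(k_s)}\,\prod_t\bigl(N_{L^*}(J_t)\bigr)_{(l_t)}\,d\mu(L)\ \longrightarrow\ \prod_s\bigl(\tfrac12\Leb(I_s)\bigr)^{k_s}\prod_t\bigl(\tfrac12\Leb(J_t)\bigr)^{l_t},
\end{align*}
where $(x)_{(k)}:=x(x-1)\cdots(x-k+1)$. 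A routine $\pm$-bookkeeping and inclusion--exclusion rewrites the left-hand side as $2^{-k_1-k_2}$, with $k_1:=\sum_s k_s$ and $k_2:=\sum_t l_t$, times the left-hand side of \eqref{MAINTHEOREMabs2res} for a bounded, boundedly supported $\rho\geq0$: here $\rho$ is the product, over the $k_1+k_2$ vector arguments, of indicator functions of spherical shells $\{a<\fV_n|\cdot|^n<b\}$, times indicators enforcing that arguments assigned to the same interval are pairwise distinct up to sign. One then applies Theorem~\ref{MAINTHEOREMabs2} to this $\rho$ and analyses the right-hand side term by term as $n\to\infty$.

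The three ``boundary'' lines of \eqref{MAINTHEOREMabs2res} vanish, since $\rho$ is zero whenever one of its arguments is $\bn$ (the shells are bounded away from $\bn$). Among the remaining terms, indexed by $(m_1,B_1,m_2,B_2,\beta)$, I expect only the one with $m_1=k_1$, $m_2=k_2$ (for which $B_1,B_2$ run over the singleton sets $A_{k_1,k_1},A_{k_2,k_2}$, which one may take to consist of identity matrices) to contribute in the limit. A term with $m_1<k_1$ (or $m_2<k_2$) forces at least two of the $L$-vectors (resp.\ $L^*$-vectors) to lie in a common proper subspace spanned by fewer of them while every vector stays in its prescribed shell; the phase-space volume of such configurations should be $o(1)$ as $n\to\infty$, by estimates parallel to those of \cite{aS2011o}, combined with the $W(\beta)$-weighted sum over $\beta\in M_{m_1,m_2}(\Z)$ which is controlled as in the next paragraph.

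For the surviving term I must show that
\begin{align*}
\sum_{\beta\in M_{k_1,k_2}(\Z)}W(\beta)\int_{S(\beta)}\rho(x,y)\,d\eta_\beta(x,y)\ \longrightarrow\ \int_{M_{n,k_1}(\R)\times M_{n,k_2}(\R)}\rho(x,y)\,dx\,dy .
\end{align*}
This rests on two facts. First, $W(\beta)\to1$ uniformly in $\beta$: since the identity lies in $\fW_\beta$ for every $\beta$, formulas \eqref{Wdef}--\eqref{fWbetadef} give $W(\beta)=\bigl(1+\sum_{A\in\fW_\beta,\ \det A\geq2}(\det A)^{k_2-n}\bigr)\big/\bigl(\zeta(n)\zeta(n-1)\cdots\zeta(n-k_1+1)\bigr)$, and both the numerator remainder (bounded uniformly in $\beta$ by $\sum_{A\in\fW_{k_1},\ \det A\geq2}(\det A)^{k_2-n}$, which equals $\zeta(n-k_2)\cdots\zeta(n-k_2-k_1+1)-1$) and $\zeta(n)\zeta(n-1)\cdots\zeta(n-k_1+1)-1$ tend to $0$ because $n>k_1+k_2$. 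Second, unwinding \eqref{NUbetaDEFnew}, for fixed $x\in U_{k_1}$ the inner sum $\sum_\beta\int_{S(\beta)_x}\rho(x,y)\,d\eta_{\beta,x}(y)$ is a Riemann-type sum over the affine slices $\{y\col x\transp y=\beta\}$, which are translates of $\{y\col x\transp y=\bn\}$ stepping through the lattice $\{y\col x\transp y\in M_{k_1,k_2}(\Z)\}$, whose mesh within the quotient $M_{n,k_2}(\R)/\{x\transp y=\bn\}$ has covolume $\dd(x)^{-k_2}$; hence this sum approximates $\dd(x)^{k_2}\int_{M_{n,k_2}(\R)}\rho(x,y)\,dy$. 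Dividing by $\dd(x)^{k_2}$ as in \eqref{NUbetaDEFnew} cancels this factor, and integrating over $x\in U_{k_1}$ gives the double integral in the limit. Finally $\int\!\int\rho(x,y)\,dx\,dy=\prod_s\Leb(I_s)^{k_s}\prod_t\Leb(J_t)^{l_t}$, because $\rho$ is a product over the $k_1+k_2$ vector variables of shell indicators (the distinctness constraints being Lebesgue-null) and $\Leb\{\vecx\in\R^n\col\fV_n|\vecx|^n\in(a,b)\}=b-a$ by the normalization of $\fV_n$. Together with the $2^{-k_1-k_2}$ prefactor this is precisely the asserted Poisson limit, and independence of the two processes drops out of the fact that $\sum_\beta W(\beta)\eta_\beta$ recovers the full Lebesgue measure on $M_{n,k_1}(\R)\times M_{n,k_2}(\R)$, thereby decoupling the $L$- and $L^*$-integrations.

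The main obstacle is the uniform-in-$n$ geometry-of-numbers estimate behind the Riemann-sum approximation of $\sum_\beta\eta_\beta$: one must bound the error uniformly in $n$, including the contribution of near-degenerate $x$, for which the mesh lattice $\{y\col x\transp y\in M_{k_1,k_2}(\Z)\}$ has a large successive minimum and the slices are far apart. Here I expect to use that $\rho$ confines $x$ to the region where each column has length of order $\sqrt n$, that the slice-integrals $\int_{\{x\transp y=\beta\}}\rho(x,y)$ are dominated by a fixed Gaussian-type profile in $\beta$ uniformly in $n$, and that $\dd(x)^{-k_2}$ is locally integrable on $M_{n,k_1}(\R)$ since $n>k_1+k_2$, so that a dominated-convergence argument applies. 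The corresponding $o(1)$ bounds for the lower-rank ($m_1<k_1$ or $m_2<k_2$) terms are close in spirit to \cite{aS2011o} but must be carried out with the same $\beta$-sum control, and together these estimates constitute the bulk of the work.
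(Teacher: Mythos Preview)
Your plan is correct and the analytic core is the same as the paper's: apply Theorem~\ref{MAINTHEOREMabs2} to a product test function, show $W(\beta)=1+O(2^{-n})$, approximate $\sum_\beta\int_{S(\beta)_x}$ by the full $y$-integral via a lattice Riemann-sum argument, and bound the off-diagonal $B_i$-terms using the estimates of \cite{aS2011o}. The paper carries this out in Sections~\ref{JMsec}--\ref{finalproofSEC}, with the Riemann-sum step done precisely in Lemmas~\ref{MAINTERMLEM1}--\ref{MAINTERMLEM3} (including a cutoff parameter to handle near-degenerate $x$) and the remaining-terms step in Lemmas~\ref{MAINTERMLEM4}--\ref{MAINTERMLEM5}.

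The one organizational difference is that you compute \emph{factorial} moments of shell counts with a distinctness-up-to-sign constraint, whereas the paper computes \emph{ordinary} moments $\EE\bigl[\prod_j N_j(L)\prod_j\tN_j(L^*)\bigr]$ of counts in nested balls (Theorem~\ref{MOMENTTHM}). Consequently, in the paper the main contribution does \emph{not} come only from $m_1=k_1$, $m_2=k_2$: it comes from \emph{all} $m_1,m_2$ with $B_i$ in the special sets $\scrM'_{k_i,m_i}$ of matrices having one $\pm1$ per row, and summing these yields the partition expressions $\sum_{P\in\scrP(k_i)}2^{k_i-\#P}\prod_{B\in P}V_{\mathsf{m}_B}$, which are then recognized as Poisson moments. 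In your setup these lower-rank $\scrM'$ terms vanish identically, since any $B_1\in\scrM'_{k_1,m_1}$ with $m_1<k_1$ forces two columns of $xB_1^{\mathsf{T}}$ to be $\pm$ one another and your distinctness indicator kills this; so only $m_i=k_i$ survives and the limit is the bare product. Your sentence attributing the vanishing of all $m_1<k_1$ terms to ``phase-space volume $o(1)$'' is therefore slightly imprecise: for $B_1\in\scrM'$ the integral is exactly zero by your constraint, while for $B_1\notin\scrM'$ it is genuinely $o(1)$ by the \cite{aS2011o}-type bounds. Either packaging works; yours avoids the partition combinatorics at the cost of a mildly more complicated test function, but the hard estimates are identical.
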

\newpage

\section{Rogers' formula}
\label{ROGERSFORMULAsec}

In this section we explain the translation between Rogers' original formulation of
his mean value formula, \cite{cR55},
and the statement in Theorem \ref{ROGERSFORMULATHMp}.
The following is Rogers' original formulation:
\begin{thm}\label{ROGERSFORMULATHM}
Let $1\leq k<n$ and let $\rho:(\R^n)^k\to\R_{\geq0}$
be a non-negative Borel measurable function.
Then
\begin{align}\label{ROGERSFORMULATHMres}
\int_{X_n}\sum_{\vecv_1,\ldots,\vecv_k\in L}\rho(\vecv_1,\ldots,\vecv_k)\,d\mu(L)
=
\rho(0,\ldots,0)
\hspace{170pt}
\\\notag
+\sum_{D}
\Big(\frac{e_1}{q}\cdots\frac{e_m}{q}\Big)^n\int_{\R^n}\cdots\int_{\R^n}\rho\Big(\sum_{i=1}^m\frac{d_{i1}}{q}\vecx_i,\ldots,\sum_{i=1}^m\frac{d_{ik}}{q}\vecx_i\Big)\,d\vecx_1\ldots d\vecx_m,
\end{align}
where the inner sum is over all integer matrices $D=(d_{ij})\in M_{m,k}(\Z)$
with $m$ arbitrary in $\{1,\ldots,k\}$,
satisfying the following properties:
No column of $D$ vanishes identically;
the entries of $D$ have greatest common divisor equal to 1;
and finally there exists a division $(\nu;\mu)=(\nu_1,\ldots,\nu_m;\mu_1,\ldots,\mu_{k-m})$ 
of the numbers $1,\ldots,k$ into two sequences $\nu_1,\ldots,\nu_m$ and $\mu_1,\ldots,\mu_{k-m}$, satisfying
\begin{align}\label{Rdivision}
& 1\leq\nu_1<\nu_2<\ldots<\nu_m\leq k,\nonumber\\
& 1\leq\mu_1<\mu_2<\ldots<\mu_{k-m}\leq k,\\
& \nu_i\neq\mu_j, \text{ if $1\leq i\leq m$, $1\leq j \leq k-m$},\nonumber
\end{align}
such that, for some $q=q(D)\in\Z^+$,
\begin{align}\label{Rdivision2}
&d_{i\nu_j}=q\delta_{ij}, \hspace{10pt}i=1,\ldots,m,\,\,j=1,\ldots,m,\\
&d_{i\mu_j}=0,\hspace{10pt}\text{ if }\,\mu_j<\nu_i,\,\,i=1,\ldots,m,\,\,j=1,\ldots,k-m.\nonumber
\end{align}
Finally $e_i=\gcd(\ve_i,q)$, $i=1,\ldots,m$, where $\ve_1,\ldots,\ve_m$ are the elementary divisors of the matrix $D$. 
\end{thm}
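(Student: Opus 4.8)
The plan is to prove \eqref{ROGERSFORMULATHMres} by the classical unfolding argument of Rogers \cite{cR55} (see also \cite{wS57ab,aMcR58a}), organised so that the measure-theoretic core — a mean value formula for primitive $m$-frames — is isolated from the number-theoretic bookkeeping that produces Rogers' explicit normal-form parametrisation. Since both sides of \eqref{ROGERSFORMULATHMres} are non-negative and the left-hand side is monotone in $\rho$, the monotone convergence theorem reduces us to $\rho$ bounded with bounded support. Writing $L=g\Z^n$ for $g\Gamma\in G/\Gamma$, every $k$-tuple of vectors of $L$ is $gM$ for a unique $M\in M_{n,k}(\Z)$, so the Siegel transform equals $\sum_{M\in M_{n,k}(\Z)}\rho(gM)$, a right-$\Gamma$-invariant function of $g$, and the left-hand side of \eqref{ROGERSFORMULATHMres} becomes $\int_{G/\Gamma}\sum_{M\in M_{n,k}(\Z)}\rho(gM)\,d\mu(g\Gamma)$. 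I would then split $M_{n,k}(\Z)$ into orbits of the left $\Gamma=\SL_n(\Z)$-action; the zero matrix contributes $\rho(0,\dots,0)$, and it remains to treat the orbits of rank $m$ for each $m\in\{1,\dots,k\}$.

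A matrix $M$ of rank $m$ has column span a rational $m$-dimensional subspace $V$ with primitive hull $\Lambda=V\cap\Z^n$; a choice of $\Z$-basis $U$ of $\Lambda$ gives $M=UC$ with $C\in M_{m,k}(\Z)$ of rank $m$, unique up to left multiplication by $\GL_m(\Z)$. Since $\SL_n(\Z)$ acts transitively on ordered primitive $m$-frames in $\Z^n$, the $\Gamma$-orbit of $M$ is determined precisely by the $\GL_m(\Z)$-orbit of $C$, and every $\GL_m(\Z)$-orbit arises. In each such orbit I would take the Rogers normal form $D$ of \eqref{Rdivision}--\eqref{Rdivision2} (a Hermite-type representative with pivot columns $qI_m$ at positions $\nu_1<\dots<\nu_m$ and the staircase conditions on the $\mu$-columns), and as orbit representative the matrix $M_D=\cmatr{D}{0}\in M_{n,k}(\Z)$. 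Because $D$ has full row rank, the stabiliser of $M_D$ in $\Gamma$ is the single subgroup $\Gamma_0=\Gamma\cap P$, independent of $D$, where $P=\bigl\{\matr{I_m}{\ast}{0}{h}\col h\in\SL_{n-m}(\R)\bigr\}$.

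For each rank-$m$ orbit, the standard unfolding identity gives $\int_{G/\Gamma}\sum_{\gamma\in\Gamma/\Gamma_0}\rho(g\gamma M_D)\,d\mu=\int_{G/\Gamma_0}\rho(gM_D)\,d\tmu$, with $\tmu$ the compatibly normalised invariant measure on $G/\Gamma_0$ (valid for $\rho\ge0$). Now $gM_D=xD$ where $x\in M_{n,m}(\R)$ is the block of the first $m$ columns of $g$, and $g\Gamma_0\mapsto x$ realises $G/\Gamma_0$ as a bundle over $G/P$, identified with $U_m$ via $gP\mapsto x$, with fibre $P/\Gamma_0$ of finite volume (a multiple of $\vol(\SL_{n-m}(\Z)\bs\SL_{n-m}(\R))$, the unipotent part contributing volume $1$). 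Since Lebesgue measure on $M_{n,m}(\R)\cong\R^{nm}$ is $\SL_n(\R)$-invariant — the map $x\mapsto gx$ has Jacobian $(\det g)^m=1$ — while $\SL_n(\R)$ acts transitively on $U_m$, the push-forward of $\tmu$ to $U_m$ is a constant multiple of Lebesgue measure; tracking the normalisations, which amounts to the recursive computation of $\vol(\SL_d(\Z)\bs\SL_d(\R))$ that underlies the mean value formula for primitive $m$-frames, yields
\[
\int_{G/\Gamma_0}\rho(gM_D)\,d\tmu=\frac{1}{\zeta(n)\zeta(n-1)\cdots\zeta(n-m+1)}\int_{M_{n,m}(\R)}\rho(xC)\,dx,
\]
for $C$ any representative of the orbit (the integral is orbit-independent since $|\det\gamma|=1$ for $\gamma\in\GL_m(\Z)$). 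Summed over all orbits, this is a clean ``$\zeta$-ful'' form of the theorem.

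It then remains to match this with the right-hand side of \eqref{ROGERSFORMULATHMres}. The substitution $\vecx_i\mapsto q^{-1}\vecx_i$ ($i=1,\dots,m$) turns $\int_{M_{n,m}(\R)}\rho(xD)\,dx$ into $q^{-mn}\int_{\R^n}\!\cdots\!\int_{\R^n}\rho\bigl(\sum_i\tfrac{d_{i1}}{q}\vecx_i,\dots,\sum_i\tfrac{d_{ik}}{q}\vecx_i\bigr)\,d\vecx_1\cdots d\vecx_m$, the integral occurring in \eqref{ROGERSFORMULATHMres}. One then groups the $\GL_m(\Z)$-orbits according to the Rogers data $(\nu;\mu,q)$ of their normal form: for a fixed such shape the remaining orbits correspond to the free entries of the normal form, equivalently to lattices intermediate between the column lattice and its primitive hull with prescribed invariant factors, and their number is a finite Euler product evaluating to a ratio of values of $\zeta$ at $n,n-1,\dots,n-m+1$. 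This product cancels the denominator $\zeta(n)\cdots\zeta(n-m+1)$ and, together with the change of variables above, produces exactly the coefficient $(e_1\cdots e_m/q^m)^n$ with $e_i=\gcd(\ve_i,q)$ and $\ve_1,\dots,\ve_m$ the elementary divisors of $D$. This last combinatorial reconciliation — keeping precise track of the $(\nu;\mu)$-bookkeeping of \eqref{Rdivision}--\eqref{Rdivision2}, including the no-vanishing-column constraint, and carrying out the divisor count that yields the $e_i$ — is the delicate and laborious part, and is in essence where the corrected accounts \cite{wS57ab,aMcR58a} concentrate their effort; it would be the main obstacle here.
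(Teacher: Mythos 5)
The paper does not actually prove Theorem \ref{ROGERSFORMULATHM}: it is quoted from Rogers \cite{cR55} (with the corrected treatments \cite{wS57ab}, \cite{aMcR58a}), and the paper's own contribution in Section \ref{ROGERSFORMULAsec} is only the dictionary between Rogers' formulation and the reformulated Theorem \ref{ROGERSFORMULATHMp}, via Lemma \ref{ELEMDIVBASIClem} and the Smith-normal-form substitution \eqref{BtoDkeyidentity}. Your plan to reprove the theorem from scratch follows the classical Rogers/Macbeath--Rogers route, and its first half is sound in outline: unfolding over the $\Gamma$-orbits of $M_{n,k}(\Z)$, classifying the rank-$m$ orbits by $\GL_m(\Z)\backslash\{C\in M_{m,k}(\Z)\col\rank C=m\}$ via transitivity on primitive frames, and identifying the unfolded integral as $\bigl(\zeta(n)\cdots\zeta(n-m+1)\bigr)^{-1}\int_{M_{n,m}(\R)}\rho(xC)\,dx$. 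But note that this identification is exactly the primitive-tuple mean value formula that the paper quotes as Theorem \ref{ROGERSbasicformulaTHM} (Macbeath--Rogers); in your write-up it is asserted (``tracking the normalisations'') rather than proved, so it should either be cited or carried out.

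The genuine gap is the step you yourself defer, namely the reconciliation with Rogers' explicit $D$-parametrisation, and your sketch of it would not work as described. It is not true that ``in each such orbit'' one can take a Rogers normal form $D$: a matrix satisfying \eqref{Rdivision2} with entries of gcd $1$ exists in only one orbit per rational row space (already for $k=m=1$ the orbits are $\{\pm c\}$, $c\geq1$, while the only admissible $D$ is $(1)$), so the $D$'s index rational subspaces of $\R^k$, not $\GL_m(\Z)$-orbits. The correct grouping is by the row space $V$: writing each orbit in the group as $C=AB\transp$ with $B$ primitive ($V_B=V$) and $A\in\fW_m$, the relevant quantity is not a ``finite Euler product evaluating to a ratio of values of $\zeta$'' but the infinite weighted sum $\sum_{A\in\fW_m}(\det A)^{-n}=\zeta(n)\zeta(n-1)\cdots\zeta(n-m+1)$, which cancels the zeta denominator and leaves $\int\rho(yB\transp)\,dy$ with coefficient $1$. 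The factor $(e_1\cdots e_m/q^m)^n$ does not come from that cancellation at all: it arises when converting $\int\rho(yB\transp)\,dy$ into Rogers' coordinates $q^{-1}D$, i.e.\ from the Smith-normal-form change of variables as in \eqref{BtoDkeyidentity}, together with the fact $\ve_i\mid q$ (Lemma \ref{ELEMDIVBASIClem}), which gives $e_i=\ve_i$. Finally, your orbit decomposition necessarily contains rank-$m$ matrices $C$ with zero columns (tuples in which some, but not all, of the $\vecv_j$ vanish; in Theorem \ref{ROGERSFORMULATHMp} these are the terms whose $B\transp$ has zero columns), and matching these against the displayed conditions on $D$ --- which exclude vanishing columns --- is precisely the kind of degenerate-stratum bookkeeping your last paragraph waves at; without it the identity \eqref{ROGERSFORMULATHMres} is not established for general non-negative measurable $\rho$. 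In short, what you have, once completed, is a proof of the reformulated Theorem \ref{ROGERSFORMULATHMp}; the missing matching with Rogers' statement is essentially the Section \ref{ROGERSFORMULAsec} translation run in reverse, and as submitted that step is both absent and mis-described.
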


\begin{remark}\label{ROGERSFORMULATHMabsconvrem}
Regarding convergence in \eqref{ROGERSFORMULATHMres},
Schmidt in \cite{wS58} proved that %
both sides of \eqref{ROGERSFORMULATHMres} are finite
whenever $\rho$ is bounded and has bounded support in $(\R^n)^k$.
In fact, Schmidt's proof %
applies verbatim to any non-negative $\rho$ of sufficiently rapid polynomial decay.
It therefore follows that \eqref{ROGERSFORMULATHMres} holds, with absolute convergence in both sides,
for all complex valued $\rho$ of sufficiently rapid polynomial decay,
and in particular for any 
Schwartz function $\rho$ on $(\R^n)^k$. %
\end{remark}

\begin{remark}
In the formula \eqref{ROGERSFORMULATHMres} in Theorem \ref{ROGERSFORMULATHM},
vector notation is used in both the left and the right-hand side.
However in the right-hand side of
\eqref{ROGERSFORMULATHMprimres} in Theorem \ref{ROGERSFORMULATHMp},
the identification $M_{n,m}(\R)=(\R^n)^m$ is used
to instead view $\rho$ as a function on $M_{n,m}(\R)$;
in particular the ``$0$'' in ``$\rho(0)$'' stands for the zero matrix in $M_{n,m}(\R)$.
It should also be noted that the right-hand side of 
\eqref{ROGERSFORMULATHMprimres}
is trivially independent of the choice of the set of representatives $A_{k,m}$.
Indeed, for any $B\in M_{n,m}(\Z)^*$ and any $\gamma\in\GL_m(\Z)$,
by substituting $x_{new}=x\gamma\transp$ we get
$\int_{M_{n,m}(\R)}\rho(xB\transp)\,dx
=\int_{M_{n,m}(\R)}\rho(x(B\gamma)\transp)\,dx.$
\end{remark}

As a preparation 
for the translation between Theorem \ref{ROGERSFORMULATHM} 
and Theorem \ref{ROGERSFORMULATHMp}
we first prove the following lemma,
which implies that in Theorem \ref{ROGERSFORMULATHM}
we have in fact $e_i=\ve_i$ for all $i$ and all $D$.

\begin{lem}\label{ELEMDIVBASIClem}
For any matrix $D$ as in Theorem \ref{ROGERSFORMULATHM},
with associated $q=q(D)$ and elementary divisors
$\ve_1,\ldots,\ve_m$,
we have $\ve_i\mid q$ for all $i$.
\end{lem}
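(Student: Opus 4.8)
The plan is to analyze the structure of the matrix $D$ imposed by conditions \eqref{Rdivision}–\eqref{Rdivision2}, and read off the elementary divisors directly. First I would observe that by \eqref{Rdivision2}, the columns $\nu_1,\dots,\nu_m$ of $D$ form the submatrix $qI_m$ (after reordering rows/columns according to the division $(\nu;\mu)$), and moreover the vanishing condition $d_{i\mu_j}=0$ when $\mu_j<\nu_i$ means that, if we reorder both the rows of $D$ and the full set of columns $1,\dots,k$ so that the $\nu$-columns come in the order $\nu_1<\dots<\nu_m$ interleaved appropriately with the $\mu$-columns, the resulting $m\times k$ matrix is in a block ``lower-triangular-like'' echelon shape: in row $i$, every entry in a column with index (in the original ordering) smaller than $\nu_i$ vanishes. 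The key point is that the $\nu_i$-th column is $q\vece_i$, and all entries of $D$ to the ``left'' of position $\nu_i$ in row $i$ are zero.

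From this I would extract the divisibility claim as follows. Recall the elementary divisors $\ve_1\mid\ve_2\mid\cdots\mid\ve_m$ satisfy $\ve_1\cdots\ve_i = \gcd$ of all $i\times i$ minors of $D$, equivalently $\ve_i = D_i/D_{i-1}$ where $D_i$ is that gcd (with $D_0=1$). To show $\ve_i\mid q$ for every $i$, it suffices to show $\ve_m\mid q$, since $\ve_i\mid\ve_m$; and since $\ve_m = D_m/D_{m-1}$, it suffices to exhibit, for a suitable choice of $m-1$ columns, an $(m-1)\times(m-1)$ minor $M$ and an $m\times m$ minor $M'$ extending it by one column such that $M' = q\cdot M$ (up to sign) and $M\neq 0$ — this forces $D_m \mid q D_{m-1}$... but more cleanly: I would argue that \emph{every} $m\times m$ minor of $D$ is divisible by $q$ times some $(m-1)\times(m-1)$ minor, hence $D_m = \gcd(m\times m \text{ minors})$ divides $q D_{m-1}$, giving $\ve_m = D_m/D_{m-1} \mid q$. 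The cleanest route to ``every $m\times m$ minor is divisible by $q$'' is a column-by-column Laplace expansion exploiting that column $\nu_i$ equals $q\vece_i$: any $m\times m$ minor of $D$ that includes all columns $\nu_1,\dots,\nu_m$ equals $q^m$ up to sign (pulling out $q$ from each such column leaves the identity); any $m\times m$ minor missing at least one $\nu_i$-column will — using the echelon/triangularity constraint from \eqref{Rdivision2} together with the fact that the present $\nu$-columns are scalar multiples of standard basis vectors — either vanish or reduce, via expansion along the included $\nu$-columns, to $q^{(\#\text{ included }\nu\text{-columns})}$ times a smaller minor of the ``$\mu$-part''.

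Actually, the most transparent argument: for $\ve_i\mid q$ it is enough that the $i\times i$ minor $D_i$ divides $q^i$ (since then every prime power in $\ve_i = D_i/D_{i-1}$ is bounded by the corresponding one in $q$; combined with $\ve_i\mid\ve_m$ and $D_m\mid\cdots$ this needs a touch more care, so I'd instead do the following uniform statement). Claim: the Smith normal form of $D$ has all invariant factors dividing $q$. Proof of claim: over $\Z$, consider $D$ as a map $\Z^k\to\Z^m$; its cokernel is $\Z^m/D\Z^k$, whose invariant factors are the $\ve_i$. Since the columns $\nu_1,\dots,\nu_m$ of $D$ are $q\vece_1,\dots,q\vece_m$, the image $D\Z^k$ contains $q\Z^m$, so $\Z^m/D\Z^k$ is a quotient of $\Z^m/q\Z^m = (\Z/q\Z)^m$, whence each invariant factor $\ve_i$ divides $q$. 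This is short and avoids all minor bookkeeping.

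So the actual proof I would write is just the last paragraph: note $q\vece_i = D(\text{$\nu_i$-th standard basis vector of }\Z^k)\in D\Z^k$ for each $i$ by \eqref{Rdivision2}, hence $q\Z^m\subseteq D\Z^k\subseteq\Z^m$, so the elementary divisors of $D$ (the invariant factors of $\Z^m/D\Z^k$) all divide $q$. There is essentially no obstacle; the only thing to be careful about is matching the convention for ``elementary divisors'' used in \cite{cR55} (invariant factors of the cokernel, equivalently Smith normal form diagonal entries) with the statement, and confirming that \eqref{Rdivision2} indeed gives $d_{i\nu_j}=q\delta_{ij}$ so that the $\nu_i$-th column is exactly $q\vece_i$ — which it does.
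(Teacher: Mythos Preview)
Your final argument is correct: from \eqref{Rdivision2} the $\nu_j$-th column of $D$ is $q\vece_j$, so $q\Z^m\subset D\Z^k$, hence the cokernel $\Z^m/D\Z^k\cong\bigoplus_i\Z/\ve_i\Z$ is annihilated by $q$, forcing $\ve_i\mid q$ for every $i$.

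The paper's proof uses the same underlying fact but packages it differently. It writes out the Smith normal form $D=\gamma_1\diag[\ve_1,\ldots,\ve_m](I_m\:0)\gamma_2$ explicitly, restricts to columns $\nu_1,\ldots,\nu_m$ to obtain $q\gamma_1^{-1}=\diag[\ve_1,\ldots,\ve_m]B$ for an integer matrix $B$, and then argues that since $\gamma_1^{-1}\in\GL_m(\Z)$ the entries in row $i$ of $q\gamma_1^{-1}$ have gcd equal to $q$, while that row is visibly divisible by $\ve_i$; hence $\ve_i\mid q$. Your cokernel/annihilator formulation is more conceptual and avoids tracking any matrix entries or invoking the Smith decomposition explicitly; the paper's version is a direct matrix computation. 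Both hinge on the single observation that $q\vece_1,\ldots,q\vece_m$ lie in the column lattice of $D$, so the difference is purely one of style. (The exploratory material in your proposal about minors and Laplace expansions is unnecessary and can be dropped.)
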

\begin{proof}
Given $D$,
by the Smith normal form theorem,
there exist $\gamma_1\in\GL_m(\Z)$ and 
$\gamma_2\in\GL_k(\Z)$ such that
\begin{align}\label{ELEMDIVBASIClempf1}
D=\gamma_1\diag[\ve_1,\ldots,\ve_m]\bigl(I_m\: 0\bigr)\gamma_2,
\end{align}
where $\bigl(I_m\: 0\bigr)$ is block notation for an $m\times k$ matrix.
By considering only columns number $\nu_1,\ldots,\nu_m$ of the matrix $D$,
it follows that
\begin{align}\label{ELEMDIVBASIClempf2}
q\gamma_1^{-1}=\diag[\ve_1,\ldots,\ve_m] B,
\end{align}
where $B\in M_m(\Z)$ is the submatrix of $\gamma_2$ formed by removing the bottom $k-m$ rows and 
the columns of indices $\mu_1,\ldots,\mu_{k-m}$.
It follows from $\gamma_1^{-1}\in\GL_m(\Z)$ that for each $i\in\{1,\ldots,m\}$,
the entries in row number $i$ of the matrix
$q\gamma_1^{-1}$ have greatest common divisor $q$.
On the other hand, \eqref{ELEMDIVBASIClempf2} implies that
all these entries are divisible by $\ve_i$.
Hence $\ve_i\mid q$.
\end{proof}

\begin{proof}[Proof that Theorem \ref{ROGERSFORMULATHMp} is a reformulation of Theorem \ref{ROGERSFORMULATHM}]
For each $m\in\{1,\ldots,k\}$,
let $\fA(m)$ be the set of matrices of fixed size $m\times k$ appearing in the 
sum in \eqref{ROGERSFORMULATHMres}.
For each $D\in\fA(m)$,
we fix a choice of 
$\gamma_1=\gamma_1(D)\in\GL_m(\Z)$ and 
$\gamma_2=\gamma_2(D)\in\GL_k(\Z)$
such that \eqref{ELEMDIVBASIClempf1} holds.
It is easy to prove that the map
$D\mapsto D\transp\R^m$ 
is a bijection from $\fA(m)$ %
onto the family of non-zero rational linear subspaces of $\R^k$.
Note also that 
\eqref{ELEMDIVBASIClempf1} implies
$D\transp\R^m=\gamma_2' \R^m$,
where $\gamma_2'=\gamma_2'(D):=\bigl(\bigl(I_m\:0\bigr)\gamma_2\bigr)\transp$
(in other words, $\gamma_2'$ is the transpose of the top $m\times k$ submatrix of $\gamma_2$);
and since $\gamma_2\in\GL_k(\Z)$, we have $\gamma_2'\in M_{k,m}(\Z)^*$.
It follows that %
we may choose 
the set of representatives $A_{k,m}$ (cf.\ p.\ \pageref{Akmspec})
to be given by
\begin{align*}
A_{k,m}:=\{\gamma_2'(D)\col D\in\fA(m)\}.
\end{align*}

For each $D\in\fA(m)$,
in terms of our matrix notation,
the multiple integral appearing in the right-hand side of 
\eqref{ROGERSFORMULATHMres} equals
\begin{align}\label{BtoDkeyidentity}
\int_{M_{n,m}(\R)}\rho(x q^{-1}D)\,dx
=q^{nm}\int_{M_{n,m}(\R)}\rho(x D)\,dx
=\Bigl(\frac q{\ve_1}\cdots\frac q{\ve_m}\Bigr)^n\int_{M_{n,m}(\R)}\rho(y{\gamma_2'}\transp)\,dy,
\end{align}
where we used \eqref{ELEMDIVBASIClempf1}
and substituted $x=y\gamma_1\diag[\ve_1,\ldots,\ve_m]$.
Hence,
using also Lemma \ref{ELEMDIVBASIClem},
we conclude that the second line of \eqref{ROGERSFORMULATHMres}
equals
\begin{align*}
\sum_{m=1}^k\sum_{B\in A_{k,m}}
\int_{M_{n,m}(\R)}\rho(yB\transp)\,dy,
\end{align*}
and it follows that the right-hand side of \eqref{ROGERSFORMULATHMres}
equals the right-hand side of \eqref{ROGERSFORMULATHMprimres}.
\end{proof}

Finally, for later reference, we also state a formula which is more basic than
Theorem \ref{ROGERSFORMULATHM} and which can be used in the proof of
Theorem \ref{ROGERSFORMULATHM}.
For any lattice $L\in X_n$ and $1\leq k<n$, we write $P_{L,k}$ for the set of 
primitive $k$-tuples in $L^k$.
\begin{thm}\label{ROGERSbasicformulaTHM}
Let $1\leq k<n$ and let $\rho:(\R^n)^k\to\R_{\geq0}$
be a non-negative Borel measurable function.
Then
\begin{align}\label{ROGERSbasicformulaTHMres}
\int_{X_n}\sum_{\langle\vecv_1,\ldots,\vecv_k\rangle\in P_{L,k}}\rho(\vecv_1,\ldots,\vecv_k)\,d\mu(L)
=\frac1{\prod_{j=n-k+1}^n\zeta(j)} %
\int_{(\R^n)^k}\rho\, d\vecx_1\cdots d\vecx_k.
\end{align}
\end{thm}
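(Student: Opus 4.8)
The plan is to reduce the sum over primitive $k$-tuples to an integral by exploiting the transitivity of the $\Gamma=\SL_n(\Z)$-action on primitive $k$-tuples together with Siegel-type unfolding. First I would recall that for a fixed $L_0=\Z^n$, the set $P_{\Z^n,k}$ of primitive $k$-tuples in $(\Z^n)^k$ carries a transitive action of $\SL_n(\Z)$ (this is the elementary fact that a primitive $k$-tuple of integer vectors can be completed to a basis of $\Z^n$, which is classical and may be taken as known). Pick a base point $\vecb^0=(\vece_1,\dots,\vece_k)\in P_{\Z^n,k}$ and let $\Gamma_0\subset\Gamma$ be its stabilizer; concretely $\Gamma_0$ consists of matrices in $\SL_n(\Z)$ whose first $k$ columns are $\vece_1,\dots,\vece_k$, i.e.\ block-upper-triangular matrices with an $I_k$ block in the top-left and an $\SL_{n-k}(\Z)$ block (modulo the affine $\Z^{(n-k)k}$ part) in the bottom-right. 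Then for $L=g\Z^n$ one has a $\Gamma$-equivariant bijection $P_{L,k}\leftrightarrow\Gamma/\Gamma_0$, and the left-hand side of \eqref{ROGERSbasicformulaTHMres} becomes
\begin{align*}
\int_{\Gamma\backslash G}\sum_{\gamma\in\Gamma/\Gamma_0}\rho\bigl((g\gamma)\vecb^0\bigr)\,d\mu(g)
=\frac1{\mu_G(\Gamma\backslash G)}\int_{\Gamma_0\backslash G}\rho\bigl(g\vecb^0\bigr)\,dg,
\end{align*}
by the standard unfolding of a sum over $\Gamma/\Gamma_0$ against a $\Gamma$-invariant integral, where $dg$ is a fixed Haar measure on $G$ (I would normalize $\mu$ so that $d\mu = dg/\mu_G(\Gamma\backslash G)$).

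Next I would compute the unfolded integral $\int_{\Gamma_0\backslash G}\rho(g\vecb^0)\,dg$. Note that $g\vecb^0$ is just the $n\times k$ matrix consisting of the first $k$ columns of $g$, so as $g$ ranges over $G$ this hits every full-rank $n\times k$ real matrix of "determinant-type" constraints coming from $\det g=1$ — but crucially the map $g\mapsto g\vecb^0$ from $G$ onto the set of $n\times k$ matrices extending to an element of $\SL_n(\R)$ is surjective onto $U_k$ (since any rank-$k$ real matrix can be completed to a matrix of determinant $1$), with fibers the cosets of $H_0:=\{h\in G: h\vecb^0=\vecb^0\}$, the real analogue of $\Gamma_0$. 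So I would push the Haar measure forward: there is a left-$H_0$-invariant measure on $H_0\backslash G\cong U_k$, and I would identify it, up to the relevant constant, with $d\vecx_1\cdots d\vecx_k$ on $M_{n,k}(\R)$. The key measure-theoretic computation is the Jacobian of the orbit map $G\to U_k$; this is a standard Iwasawa/block-decomposition calculation and I would expect the answer to be that the pushforward of $dg$ is a constant multiple of Lebesgue measure on $U_k$ (which has full measure in $M_{n,k}(\R)$), the constant being $\vol(\Gamma_0\backslash H_0)$ appropriately interpreted, i.e.\ $\mu_G(\Gamma\backslash G)$ divided by $\prod_{j=n-k+1}^n\zeta(j)$.

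The cleanest way to pin down all constants at once — and the step I expect to be the main obstacle — is to evaluate both sides of \eqref{ROGERSbasicformulaTHMres} on one or a family of test functions for which everything is explicitly computable, e.g.\ $\rho=\chi_E$ for suitable $E$, or to invoke the known normalization $\mu_G(\SL_n(\Z)\backslash\SL_n(\R)) = \prod_{j=2}^n\zeta(j)$ (Siegel) together with an induction on $k$: the case $k=1$ is exactly Siegel's formula restricted to primitive vectors, $\int_{X_n}\sum_{\vecv\in L\text{ primitive}}\rho(\vecv)\,d\mu = \zeta(n)^{-1}\int_{\R^n}\rho$, which follows from \eqref{SIEGELintformula} by Möbius inversion over the dilation $\vecv\mapsto m\vecv$. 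For the inductive step one fibers a primitive $k$-tuple over its first $k-1$ entries and observes that, after quotienting by $\Span_\R(\vecv_1,\dots,\vecv_{k-1})$, the $k$-th vector becomes a primitive vector in the $(n-k+1)$-dimensional quotient lattice, contributing the factor $\zeta(n-k+1)^{-1}$ by the $k=1$ case applied in dimension $n-k+1$; carrying the volume normalizations through this fibration is exactly the bookkeeping that produces $\prod_{j=n-k+1}^n\zeta(j)^{-1}$. I would choose whichever of these two routes (direct Jacobian computation versus induction via Siegel) is shortest given what has already been set up in the paper, and note that either way the finiteness of both sides for bounded $\rho$ of bounded support is automatic from the explicit integral on the right.
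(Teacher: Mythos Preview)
The paper does not actually prove this statement: its entire proof is the sentence ``See Macbeath and Rogers, \cite[Theorem 1 and formula (13)]{aMcR58a}.'' So there is no argument of the paper's own to compare against; the result is being quoted as a known input.

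Your sketch is essentially the standard proof of this classical fact, and in particular is close in spirit to what Macbeath and Rogers do in the cited reference: one uses transitivity of $\SL_n(\Z)$ on $P_{\Z^n,k}$, unfolds the integral from $G/\Gamma$ to $G/\Gamma_0$, identifies $G/H_0$ with $U_k$ via $g\mapsto g\vecb^0$, and then reads off the constant from the volume of $H_0/\Gamma_0\cong (\R/\Z)^{k(n-k)}\times X_{n-k}$ together with Siegel's volume formula $\mu_G(\Gamma\backslash G)=\prod_{j=2}^n\zeta(j)$. Your alternative inductive route via M\"obius inversion in the case $k=1$ also works. Two minor slips worth cleaning up: the affine block in $\Gamma_0$ has size $k\times(n-k)$, not $(n-k)\times k$; and you mix the conventions $G/\Gamma$ and $\Gamma\backslash G$ (the paper uses the former), so be consistent when writing down the unfolding. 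Finally, since $\mu$ is normalized to be a probability measure, the constant genuinely comes out as the ratio $\bigl(\prod_{j=2}^{n-k}\zeta(j)\bigr)/\bigl(\prod_{j=2}^n\zeta(j)\bigr)=\prod_{j=n-k+1}^n\zeta(j)^{-1}$, and you should say explicitly that the $G$-invariant measure on $G/H_0\cong U_k$ pushed forward from Haar is (a constant times) Lebesgue $dx$ on $M_{n,k}(\R)$; this is the only genuine computation and is immediate from the block Iwasawa decomposition $g=\smatr{I_k}{B}{0}{D}\smatr{x'}{0}{*}{I_{n-k}}$ with $x'\in\GL_k(\R)$.
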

\begin{proof}
See Macbeath and Rogers,
\cite[Theorem 1 and formula (13)]{aMcR58a}.
\end{proof}
\newpage

\section{Proof of the main theorem}
\label{DIRECTPROOFsec}

In this section we prove Theorem \ref*{MAINTHEOREMabs2}.

\subsection{Initial decompositions}

Recall that for any positive integer $m$,
we identify $(\R^n)^m$ with $M_{n,m}(\R)$.
In particular, %
for any lattice $L\in X_n$, the set $L^m$ is 
identified with a lattice in $M_{n,m}(\R)$,
and the left-hand side of \eqref{MAINTHEOREMabs2res}
can be expressed as follows:
\begin{align}\label{MAINTHMpf1}
&\int_{X_n}\sum_{x\in L^{k_1}}\sum_{y\in (L^*)^{k_2}}\rho(x,y)\,d\mu(L),
\end{align}
where $\rho$ is now a function on the space
\begin{align*}
\scrM=\scrM_{n,k_1,k_2}:=M_{n,k_1}(\R)\times M_{n,k_2}(\R).
\end{align*}
We will study the expression in \eqref{MAINTHMpf1} 
as a functional on the space $C_c(\scrM)$ of continuous functions $\rho$ on $\scrM$ of compact support.
A key observation is that if we let the group $G=\SL_n(\R)$ act on $\scrM$ in the following way:
\begin{align}\label{Gaction}
g\langle x,y\rangle:=\big\langle gx,g^{-\mathsf{T}}y\big\rangle,
\qquad \text{for any }g\in G,\: \langle x,y\rangle\in\scrM, %
\end{align}
then this functional is \textit{$G$-invariant:}
\begin{lem}\label{GinvfunctionalLEM}
The map $F:C_c(\scrM)\to\C$ given by
\begin{align}\label{GinvfunctionalLEMres}
F(\rho)=\int_{X_n}\sum_{x\in L^{k_1}}\sum_{y\in (L^*)^{k_2}}\rho(x,y)\,d\mu(L)
\end{align}
is a $G$-invariant positive linear functional on $C_c(\scrM)$. %
\end{lem}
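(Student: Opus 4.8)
The plan is to check the three asserted properties of $F$ — linearity, positivity, and $G$-invariance — each of which is essentially a routine consequence of the definition, with the only real content being that $F(\rho)$ is finite (and hence a well-defined complex number) for $\rho\in C_c(\scrM)$. Linearity is immediate: for $\rho\in C_c(\scrM)$ the double sum $\sum_{x\in L^{k_1}}\sum_{y\in(L^*)^{k_2}}\rho(x,y)$ is, for each fixed $L$, a finite sum (since $\rho$ has compact support and $L$, $L^*$ are discrete), so $\rho\mapsto\sum_{x,y}\rho(x,y)$ is a linear functional on $C_c(\scrM)$ for each $L$, and integration over $X_n$ against $\mu$ preserves linearity once finiteness is known. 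Positivity is equally direct: if $\rho\geq0$ then every term in the sum is $\geq0$, hence the integrand is $\geq0$, hence $F(\rho)\geq0$.

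For finiteness, I would invoke Theorem \ref{MAINTHEOREMabs2} itself (applied to $|\rho|$, which is non-negative, bounded and compactly supported): its statement asserts that the right-hand side of \eqref{MAINTHEOREMabs2res} is finite whenever $\rho$ is bounded with bounded support, and since the left-hand side is a sum/integral of non-negative quantities equal to a finite number, $F(|\rho|)<\infty$; therefore $F(\rho)$ converges absolutely for every $\rho\in C_c(\scrM)$. Alternatively — and this is the logically cleaner route if one does not wish to presuppose the theorem being proved — finiteness can be obtained directly from Rogers' formula (Theorem \ref{ROGERSFORMULATHMp}) or Schmidt's bound (Remark \ref{ROGERSFORMULATHMabsconvrem}): bounding $\sum_{x\in L^{k_1}}\sum_{y\in(L^*)^{k_2}}|\rho(x,y)|\leq\|\rho\|_\infty\cdot\#\{x\in L^{k_1}:|x|\leq R\}\cdot\#\{y\in(L^*)^{k_2}:|y|\leq R\}$ for suitable $R$, and then noting that the integral over $X_n$ of a product of two Siegel-type counting functions is finite — e.g.\ by Cauchy–Schwarz together with the fact that the Siegel transform of an $L^2$ (indeed any sufficiently rapidly decaying) function lies in $L^2(X_n)$, which follows from Rogers' formula with $k$ replaced by $2k_1$ or $2k_2$. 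In the write-up I would simply cite the finiteness clause of Theorem \ref{MAINTHEOREMabs2} or defer to the absolute convergence established later, whichever the paper prefers; this is the one point deserving explicit mention.

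Finally, $G$-invariance: fix $g\in G$ and $\rho\in C_c(\scrM)$, and write $\rho_g(x,y):=\rho(g\langle x,y\rangle)=\rho(gx,g^{-\mathsf{T}}y)$, which is again in $C_c(\scrM)$. For $L\in X_n$ one has $L=h\Z^n$ for some $h\in G$, and then $gL=gh\Z^n$ while $(gL)^*=(gh)^{-\mathsf{T}}\Z^n=g^{-\mathsf{T}}(h^{-\mathsf{T}}\Z^n)=g^{-\mathsf{T}}L^*$; thus the map $L\mapsto gL$ on $X_n$ sends the pair $(L,L^*)$ to $(gL,g^{-\mathsf{T}}L^*)$. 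Substituting $x'=gx$, $y'=g^{-\mathsf{T}}y$ (a bijection of $L^{k_1}\times(L^*)^{k_2}$ onto $(gL)^{k_1}\times((gL)^*)^{k_2}$) gives
\begin{align*}
\sum_{x\in L^{k_1}}\sum_{y\in(L^*)^{k_2}}\rho_g(x,y)
=\sum_{x'\in (gL)^{k_1}}\sum_{y'\in((gL)^*)^{k_2}}\rho(x',y'),
\end{align*}
so the integrand at $L$ for $\rho_g$ equals the integrand at $gL$ for $\rho$. Since $\mu$ is $G$-invariant, integrating over $L\in X_n$ and changing variables $L\mapsto gL$ yields $F(\rho_g)=F(\rho)$, which is exactly $G$-invariance. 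The main (and only) obstacle is the finiteness point noted above; everything else is formal. Positivity then also guarantees $F$ extends to a Radon measure on $\scrM$ by the Riesz representation theorem, which is presumably how the functional will be used in the sequel.
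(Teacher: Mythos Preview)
Your treatment of linearity, positivity, and $G$-invariance is correct and matches the paper's argument essentially verbatim. The only substantive issue is finiteness, and here both of your suggested routes have problems.

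Route (a), invoking Theorem~\ref{MAINTHEOREMabs2}, is circular: this lemma is the starting point of the proof of Theorem~\ref{MAINTHEOREMabs2} in Section~\ref{DIRECTPROOFsec} (it is what justifies treating $F$ as a positive $G$-invariant functional in the first place), so you cannot defer to the theorem here.

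Route (b), via Cauchy--Schwarz and Rogers' formula with exponents $2k_1$ and $2k_2$, is not guaranteed to work: the hypothesis is only $k_1+k_2<n$, and one can easily have $2k_1\geq n$ (e.g.\ $k_1=5$, $k_2=1$, $n=7$), in which case Rogers' formula for the $2k_1$-th moment is unavailable. This is fixable: use H\"older with conjugate exponents $p=(k_1+k_2)/k_1$ and $q=(k_1+k_2)/k_2$ instead of Cauchy--Schwarz, so that both moments needed are of order $k_1+k_2<n$, and then Rogers applies (using also that $L\mapsto L^*$ preserves $\mu$).

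The paper takes a different and cleaner route. It chooses the specific Schwartz function $\varphi(\vecx_1,\ldots,\vecx_k)=\prod_{j=1}^k e^{-\pi\|\vecx_j\|^2}$ on $(\R^n)^k$ with $k=k_1+k_2$, applies Rogers (via Remark~\ref{ROGERSFORMULATHMabsconvrem}) \emph{once} with this $k<n$ to get
\[
\int_{X_n}\sum_{x\in L^{k_1}}\sum_{y\in L^{k_2}}\varphi(x,y)\,d\mu(L)<\infty,
\]
and then uses the Poisson summation identity $\sum_{\vecx\in L}e^{-\pi\|\vecx\|^2}=\sum_{\vecx\in L^*}e^{-\pi\|\vecx\|^2}$ (the Gaussian is its own Fourier transform) to swap each of the last $k_2$ sums from $L$ to $L^*$ \emph{pointwise in $L$}. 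This yields finiteness of $\int_{X_n}\sum_{x\in L^{k_1}}\sum_{y\in(L^*)^{k_2}}\varphi(x,y)\,d\mu(L)$ directly, and then $\chi_K\leq(\inf_K\varphi)^{-1}\varphi$ handles an arbitrary compact $K$. The advantage over your H\"older fix is that it invokes Rogers only at the sharp exponent $k_1+k_2$ and needs no inequality between integrals at all.
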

\begin{proof}
We first prove that the functional $F$ is well-defined,
by verifying that the expression in \eqref{GinvfunctionalLEMres} is absolutely convergent
in the sense that 
$\int_{X_n}\sum_{x\in L^{k_1}}\sum_{y\in (L^*)^{k_2}}|\rho(x,y)|\,d\mu(L)<\infty$
for any $\rho\in C_c(\scrM)$.
To do so, it suffices to verify that
\begin{align}\label{GinvfunctionalLEMpf1}
\int_{X_n}\sum_{x\in L^{k_1}}\sum_{y\in (L^*)^{k_2}}\chi_K(x,y)\,d\mu(L)<\infty
\end{align}
for any compact subset $K$ of $\scrM$.
To this end, set $k=k_1+k_2$ and consider the Schwartz function
$\varphi(\vecx_1,\ldots,\vecx_k):=\prod_{j=1}^k e^{-\pi\|\vecx_j\|^2}$ on $\scrM$,
where we have identified $\scrM$ with $(\R^n)^k$
via our identifications $M_{n,k_1}(\R)=(\R^n)^{k_1}$.
It follows from Remark \ref{ROGERSFORMULATHMabsconvrem}
that 
\begin{align}\label{GinvfunctionalLEMpf2}
\int_{X_n}\sum_{x\in L^{k_1}}\sum_{y\in L^{k_2}}\varphi(x,y)\,d\mu(L)<\infty.
\end{align}
But for any fixed lattice $L\in X_n$ we have
$\sum_{\vecx\in L}e^{-\pi\|\vecx\|^2}
=\sum_{\vecx\in L^*}e^{-\pi\|\vecx\|^2}$,
by the Poisson summation formula,
and since the function $\vecx\mapsto e^{-\pi\|\vecx\|^2}$ ($\vecx\in\R^n$) is its own Fourier transform.
Hence also
$\sum_{x\in L^{k_1}}\sum_{y\in L^{k_2}}\varphi(x,y)
=\sum_{x\in L^{k_1}}\sum_{y\in (L^*)^{k_2}}\varphi(x,y)$ for every $L\in X_n$.
Using this fact in \eqref{GinvfunctionalLEMpf2},
together with the fact that 
$\inf_K\varphi>0$ for any given compact set $K\subset\scrM$,
we obtain \eqref{GinvfunctionalLEMpf1},
thus completing the proof of the absolute convergence in \eqref{GinvfunctionalLEMres}.

Finally we prove that $F$ is $G$-invariant.
Given $\rho\in C_c(\scrM)$ and $g\in G$, let us write
$\rho_g(x,y):=\rho(g\langle x,y\rangle)$.
For any $L\in X_n$ %
we have $g^{-\mathsf{T}}L^*=(gL)^*$, and hence
\begin{align*}
\sum_{x\in L^{k_1}}\sum_{y\in (L^*)^{k_2}}\rho_g(x,y)
=\sum_{x\in (gL)^{k_1}}\sum_{y\in ((g^{-\mathsf{T}}L)^*)^{k_2}}\rho(x,y)
=\sum_{x\in (gL)^{k_1}}\sum_{y\in ((gL)^*)^{k_2}}\rho(x,y).
\end{align*}
Using this formula in \eqref{GinvfunctionalLEMres},
and the fact that $\mu$ is $G$-invariant,
it follows that $F(\rho_g)=F(\rho)$,
which is %
the desired $G$-invariance of $F$.
\end{proof}

Our strategy will now be to combinatorially decompose the above functional 
as a sum of several ``smaller'' $G$-invariant terms,
which are in a certain sense more basic and which we will
be able to identify in an %
explicit way.
The first decomposition step is standard:
In the expression \eqref{MAINTHMpf1},
we restrict the summation ranges for $x$ and $y$ to the set of
\textit{linearly independent} tuples of vectors in $L$ and $L^*$. %
(This is in analogy with, e.g., the discussion in 
\cite[between (36) and (42)]{cR55}.)
For this we use the following basic fact.
\begin{lem}\label{rankmintmatricesbijLEM}
For any $1\leq m\leq k\leq n$,
the map $\langle\gamma,B\rangle\mapsto\gamma B\transp$
is a bijection from $(M_{n,m}(\Z)\cap U_m)\times A_{k,m}$ onto 
the set of matrices in $M_{n,k}(\Z)$ of rank $m$.
\end{lem}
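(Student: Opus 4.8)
The plan is to prove Lemma \ref{rankmintmatricesbijLEM} by exhibiting explicit inverse maps and checking well-definedness and bijectivity in two steps. First I would verify that the map is well-defined: if $\gamma\in M_{n,m}(\Z)\cap U_m$ (so $\gamma$ has rank $m$) and $B\in A_{k,m}\subset M_{k,m}(\Z)^*$ (so $B$ also has rank $m$), then $\gamma B\transp\in M_{n,k}(\Z)$ obviously, and its rank is $m$ because the column space of $\gamma B\transp$ is $\gamma(V_B)$ and $\gamma$ restricted to $V_B\subset\R^m$ is injective (as $\gamma$ has rank $m$), while $\rank(B\transp)=m$; alternatively, the row space of $\gamma B\transp$ lies in $V_B^{\mathsf{T}}$-style span but a cleaner argument is that $\gamma B\transp$ has rank $\le m$ trivially, and rank $\ge m$ since $B\transp$ is surjective onto $\Z^m\otimes\R$ and $\gamma$ is injective on $\R^m$.

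Next I would construct the inverse. Given $M\in M_{n,k}(\Z)$ of rank $m$, consider its row space --- or rather, think of $M$ as a linear map $\R^k\to\R^n$; its image $V_M:=M\R^k$ is an $m$-dimensional subspace of $\R^n$, but what we really want is the $m$-dimensional subspace of $\R^k$ spanned by the \emph{rows} of $M$, equivalently $V:=M\transp\R^n\subset\R^k$. Since $M$ has integer entries, $V$ is a rational subspace, so by the bijection recalled in the excerpt (the map $B\mapsto V_B$ is a bijection from $A_{k,m}$ onto the rational $m$-dimensional subspaces of $\R^k$) there is a unique $B=B(M)\in A_{k,m}$ with $V_B=V$. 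I claim $M$ then factors uniquely as $M=\gamma B\transp$ with $\gamma\in M_{n,m}(\Z)\cap U_m$: since the rows of $M$ lie in $V_B=B\R^m$, each row of $M$ is $B$ times a real vector, i.e.\ $M\transp = B\gamma\transp$ for a unique $\gamma\transp\in M_{m,n}(\R)$ (uniqueness because $B$ has rank $m$, hence left-cancellable), so $M=\gamma B\transp$. One then checks $\gamma$ has integer entries: the columns of $M\transp$ lie in $V_B\cap\Z^k$, which equals $B\Z^m$ by the defining property of $M_{k,m}(\Z)^*$; so $M\transp=B\gamma\transp$ with $\gamma\transp\in M_{m,n}(\Z)$, i.e.\ $\gamma\in M_{n,m}(\Z)$. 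Finally $\gamma$ has rank $m$ because $\rank M=m$ and $M=\gamma B\transp$ forces $\rank\gamma\ge m$ (composing the surjection $B\transp:\R^n\twoheadrightarrow\R^m$... more precisely $B\transp$ is surjective, so $\rank(\gamma B\transp)=\rank\gamma$), hence $\gamma\in U_m$.

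It remains to check the two composites are identities. Starting from $\langle\gamma,B\rangle$, forming $M=\gamma B\transp$, the recovered subspace is $M\transp\R^n=B\gamma\transp\R^n=B\R^m=V_B$ (using $\rank\gamma=m$, so $\gamma\transp\R^n=\R^m$), so the recovered representative is $B$ itself, and then the recovered $\gamma'$ satisfies $\gamma' B\transp=\gamma B\transp$, whence $\gamma'=\gamma$ by right-cancellation of $B\transp$ (which has full column rank $m$ on the left... here $B\transp$ is $m\times k$ of rank $m$, so it is right-invertible over $\Q$, giving $\gamma'=\gamma$). Conversely, starting from $M$ of rank $m$, the construction gives $\langle\gamma,B\rangle$ with $\gamma B\transp=M$ by construction. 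So the map is a bijection.

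The main obstacle, and the only genuinely non-formal point, is the integrality of $\gamma$ --- i.e.\ showing that the real matrix expressing the rows of $M$ in the $B$-coordinates actually has integer entries. This is exactly where the hypothesis $B\in M_{k,m}(\Z)^*$ (equivalently, that $B$ spans a \emph{primitive} sublattice, $V_B\cap\Z^k=B\Z^m$) is used: without primitivity, the rows of $M$ could be integer points of $V_B$ that are not in $B\Z^m$, and $\gamma$ would only be rational. Everything else is a routine linear-algebra bookkeeping exercise about factoring through a full-rank matrix, together with the already-established dictionary between $A_{k,m}$ and rational subspaces of $\R^k$.
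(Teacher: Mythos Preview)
Your proof is correct and follows essentially the same approach as the paper: identify the row space of the given rank-$m$ integer matrix as a rational $m$-dimensional subspace of $\R^k$, pick the unique $B\in A_{k,m}$ representing it, and use the primitivity condition $V_B\cap\Z^k=B\Z^m$ to get integrality of the factor $\gamma$. The paper's argument is slightly terser (it proves unique existence of the preimage rather than writing out both composites), and one small slip in your write-up---``the column space of $\gamma B\transp$ is $\gamma(V_B)$''---does not parse since $V_B\subset\R^k$ while $\gamma$ acts on $\R^m$, but you immediately give a correct alternative.
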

\begin{proof}
Let $C$ be a matrix in $M_{n,k}(\Z)$ of rank $m$.
Let $V$ be the column space of $C\transp$; this is an
$m$-dimensional subspace of $\R^k$ spanned by integer vectors;
hence rational;
hence there exists a unique $B\in A_{k,m}$ 
such that $V=V_B=B\R^m$,
that is, $\R^n C=\R^m B\transp$.
It also follows from $B\in A_{k,m}$
that each vector
$\vecw$ in $\R^m B\transp$
equals $\vecv B\transp$ for a \textit{unique} vector $\vecv\in\R^m$,
with $\vecv\in\Z^m$ if and only if $\vecw\in\Z^k$.
Applying this fact to each row vector of $C$,
it follows that there exists a unique matrix
$\gamma\in M_{n,m}(\Z)$ such that $C=\gamma B\transp$,
and this $\gamma$ must have full rank $m$
since $C$ has full rank $m$;
in other words $\gamma\in M_{n,m}(\Z)\cap U_m$.
\end{proof}

For any $g\in G$,
letting $L:=g\Z^n$ in $X_n$,
we have $L^k=(g\Z^n)^k=g\cdot M_{n,k}(\Z)$.
Hence it follows from Lemma \ref{rankmintmatricesbijLEM} 
that the map $\langle x,B\rangle\mapsto xB\transp$
is a bijection from $(L^m\cap U_m)\times A_{k,m}$ onto the set of 
tuples in $L^k$ whose linear span in $\R^n$ has dimension $m$.
Applying this to both the sums in \eqref{MAINTHMpf1},
it follows that \eqref{MAINTHMpf1} can be rewritten as
\begin{align}\label{MAINTHMpf10}
\sum_{m_1=1}^{k_1}\sum_{B_1\in A_{k_1,m_1}}\sum_{m_2=1}^{k_2}\sum_{B_2\in A_{k_2,m_2}}
\int_{X_n}\sum_{x\in L^{m_1}\cap U_{m_1}}\sum_{y\in (L^*)^{m_2}\cap U_{m_2}}\rho(xB_1^{\mathsf{T}},yB_2^{\mathsf{T}})\,d\mu(L)
\\\notag
+\int_{X_n}\sum_{x\in L^{k_1}}\rho(x,0)\,d\mu(L)
+\int_{X_n}\sum_{y\in (L^*)^{k_2}}\rho(0,y)\,d\mu(L)
-\rho(0,0).
\end{align}
By applying Theorem \ref{ROGERSFORMULATHMp}
to the two integrals in the last line
(where for the last integral we use the fact that the map $L\mapsto L^*$ is a diffeomorphism of $X_n$
preserving the measure $\mu$),
we find that the last line in \eqref{MAINTHMpf10}
exactly matches the last line in \eqref{MAINTHEOREMabs2res}.
Hence it remains to handle the first line in \eqref{MAINTHMpf10}.
Clearly, it will suffice to find an appropriate explicit expression for the following integral,
for any given $m_1,m_2\in\Z^+$ with $m_1+m_2<n$ and any given 
$\rho\in C_c(\scrM)$ (where now $\scrM=\scrM_{n,m_1,m_2}$):
\begin{align}\label{MAINTHMpf4}
&\int_{X_n}\sum_{x\in L^{m_1}\cap U_{m_1}}\sum_{y\in (L^*)^{m_2}\cap U_{m_2}}\rho(x,y)\,d\mu(L).
\end{align}
Note that the proof of Lemma \ref{GinvfunctionalLEM} carries over immediately
to show that also the expression in \eqref{MAINTHMpf4}
is a $G$-invariant positive linear functional on $C_c(\scrM)$.

The next decomposition step is to note that 
for each $\beta\in M_{m_1,m_2}(\R)$,
the $G$-action in \eqref{Gaction} 
\textit{preserves the submanifold $S(\beta)$} of 
$\scrM$. \footnote{Note that $S(\beta)\subset U_{m_1}\times U_{m_2}\subset\scrM$.}
We also note that for any $L\in X_n$
and any $x\in L^{m_1}$ and $y\in (L^*)^{m_2}$ we have $x\transp y\in M_{m_1,m_2}(\Z)$,
i.e.\ the pair $\langle x,y\rangle$ belongs to $S(\beta)$ for some \textit{integer} matrix $\beta\in M_{m_1,m_2}(\Z)$.
It is therefore natural to decompose 
\eqref{MAINTHMpf4} according to the values of $\beta=x\transp y$,
i.e., to rewrite \eqref{MAINTHMpf4} as
the sum over all $\beta\in M_{m_1,m_2}(\Z)$ of the following expression:
\begin{align}\label{MAINTHMpf5}
\int_{X_n}\sum_{x\in L^{m_1}\cap U_{m_1}}
\sum_{\substack{y\in (L^*)^{m_2}\cap U_{m_2} %
\\ (x\transp y=\beta)}} \rho(x,y)\,d\mu(L).
\end{align}
It turns out that for each fixed $\beta$ the corresponding term
in \eqref{MAINTHMpf5} is a $G$-invariant 
positive linear functional on $C_c(S(\beta))$
(this follows from \eqref{MAINTHMpf7} and Lemma \ref{GinvfunctionalLEM2} below).
We will also prove that the action of $G$ on $S(\beta)$ is
transitive, %
and therefore
such a $G$-invariant functional is
\textit{unique} up to scalar multiplication.
This will allow us to identify the functional in 
\eqref{MAINTHMpf5} in a completely explicit way.

The identification just mentioned will go via
one last decomposition step:
In \eqref{MAINTHMpf5},
we restrict the summation over $x$
to \textit{primitive} $m_1$-tuples in $L^{m_1}$,
i.e.\ tuples which can be extended to a $\Z$-basis of $L$.
That is, we consider the following expression:
\begin{align}\label{MAINTHMpf6}
F_\beta(\rho):=\int_{X_n}\sum_{x\in P_{L,m_1}}
\sum_{\substack{y\in (L^*)^{m_2}\cap U_{m_2} %
\\ (x\transp y=\beta)}}
\rho(x,y)\,d\mu(L),
\end{align}
where we recall that $P_{L,m_1}\subset L^{m_1}\cap U_{m_1}$
denotes the subset of primitive $m_1$-tuples in $L^{m_1}$.
The proof that \eqref{MAINTHMpf5} can be expressed in terms of \eqref{MAINTHMpf6}
goes via the following (standard) lemma.

\begin{lem}\label{PLmWmbijectionLEM}
For any lattice $L\subset\R^n$ and any $1\leq m<n$,
the map $\langle x,A\rangle\mapsto xA$
is a bijection from $P_{L,m}\times \fW_{m}$ onto $L^m\cap U_m$.
\end{lem}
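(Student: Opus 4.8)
The plan is to reduce the statement to the existence and uniqueness of the Hermite normal form of a nonsingular integer matrix, together with two standard facts about primitive sublattices. Recall that under our conventions an $m$-tuple in $L^m$ is the $n\times m$ matrix having those vectors as columns, and that for $A\in\fW_m$ the $j$-th column of $xA$ is $\sum_{i\leq j}a_{ij}$ times the $i$-th column of $x$. Well-definedness of the map is immediate: if $x\in P_{L,m}\subset L^m\cap U_m$ and $A\in\fW_m$, then the columns of $xA$ are $\Z$-linear combinations of the columns of $x$, hence lie in $L$, while $\det A>0$ (as $A$ is upper triangular with positive diagonal) and $\dd(x)>0$ give $\dd(xA)>0$, so $xA\in L^m\cap U_m$.

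For surjectivity, fix $w\in L^m\cap U_m$, let $M$ be the real span of the columns of $w$, and set $\Lambda:=L\cap M$. Since $\Lambda$ is discrete and contains the $m$ linearly independent columns of $w$, it is a lattice of rank exactly $m$ in $M$, and it is a primitive sublattice of $L$: indeed $\Span_\R(\Lambda)=M$, so $\Span_\R(\Lambda)\cap L=M\cap L=\Lambda$. A primitive sublattice is a direct summand of $L$ (equivalently, $L/\Lambda$ is torsion-free: if $k\vecx\in\Lambda$ with $\vecx\in L$ and $k\in\Z^+$, then $\vecx\in M$, hence $\vecx\in\Lambda$), so every $\Z$-basis $v$ of $\Lambda$, viewed as an $n\times m$ matrix, lies in $P_{L,m}$. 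Picking one such $v$: the columns of $w$ lie in $\Lambda=v\Z^m$, so $w=vC$ for a unique $C\in M_m(\Z)$, necessarily nonsingular because $w$ and $v$ both have rank $m$. Writing the Hermite normal form $C=\gamma A$ with $\gamma\in\GL_m(\Z)$ and $A\in\fW_m$, we obtain $w=(v\gamma)A$ with $v\gamma$ again a $\Z$-basis of $\Lambda$, hence in $P_{L,m}$.

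For injectivity, suppose $x_1A_1=x_2A_2$ with $x_i\in P_{L,m}$ and $A_i\in\fW_m$. Since $A_i$ is nonsingular, $x_1$ and $x_2$ have the same real column span $M$; and since $x_i$ extends to a $\Z$-basis of $L$, its columns $\Z$-span exactly $\Span_\R(\text{columns of }x_i)\cap L=L\cap M$. Thus $x_1$ and $x_2$ are two $\Z$-bases of $L\cap M$, so $x_2=x_1\gamma$ for a unique $\gamma\in\GL_m(\Z)$; substituting and cancelling the full-column-rank left factor $x_1$ from $x_1A_1=x_1\gamma A_2$ gives $A_1=\gamma A_2$, and the uniqueness of the Hermite normal form forces $\gamma=I_m$, whence $A_1=A_2$ and then $x_1=x_2$.

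The only ingredient that is not pure bookkeeping is the factorization $C=\gamma A$ ($\gamma\in\GL_m(\Z)$, $A\in\fW_m$), existing and unique, for any nonsingular $C\in M_m(\Z)$: existence follows by clearing the below-diagonal entries of $C$ with integral row operations (leftmost column first), normalizing the diagonal to be positive, and reducing the above-diagonal entries modulo the diagonal; uniqueness follows from $A_1=\gamma A_2$ by first matching diagonal entries (which forces $\gamma$ to be unipotent upper triangular) and then the successive super-diagonals of $A_1$ and $A_2$ in turn. I expect this step — and in particular getting the left- versus right-multiplication convention of the normal form to match the upper-triangular shape prescribed by $\fW_m$ — to be where care is most needed, the lattice-theoretic part being routine.
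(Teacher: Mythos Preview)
Your proof is correct. The paper's own proof is a one-line citation to \cite[Lemma 9]{aMcR58a} (Macbeath--Rogers); what you have written is essentially a self-contained version of that lemma, namely the fact that $\fW_m$ is a set of coset representatives for $\GL_m(\Z)\backslash M_m(\Z)^{\mathrm{nonsing}}$ (Hermite normal form) combined with the standard identification of $P_{L,m}$ with the set of $\Z$-bases of rank-$m$ primitive sublattices of $L$.
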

(Here $\fW_m$ is the set introduced just above \eqref{fWbetadef}, with $m=m_1$.)
\begin{proof}
Immediate from \cite[Lemma 9]{aMcR58a}.
\end{proof}

It follows from Lemma \ref{PLmWmbijectionLEM} that
for any $\beta\in M_{m_1,m_2}(\Z)$ and $\rho\in C_c(S(\beta))$,
the expression in \eqref{MAINTHMpf5}
can be rewritten as %
\begin{align}\label{MAINTHMpf7}
&\int_{X_n}\sum_{A\in\fW_{m_1}}\sum_{x\in P_{L,m_1}}
\sum_{\substack{y\in (L^*)^{m_2}\cap U_{m_2} %
\\ ((xA)\transp y=\beta)}}
\rho(xA,y)\,d\mu(L)
=\sum_{A\in\fW_{\beta}}F_{\tA\beta}(\rho_A),
\end{align}
where $\rho_A\in C_c(S(\tA\beta))$ is defined by
$\rho_A(x,y)=\rho(xA,y)$,
and $F_\beta$ is given by \eqref{MAINTHMpf6};
recall also that $\fW_{\beta}$ denotes the set of all $A\in\fW_{m_1}$ satisfying 
$\tA\beta\in M_{m_1,m_2}(\Z)$.
The change of order of summation in \eqref{MAINTHMpf7}
is justified by absolute convergence;
indeed the %
left-hand side of
\eqref{MAINTHMpf7}, with $|\rho|$ in the place of $\rho$,
is majorized by the expression in \eqref{GinvfunctionalLEMpf1}
with $K=\supp(\rho)$.

\subsection{$G$-invariant measures on $S(\beta)$} %

\begin{lem}\label{GinvfunctionalLEM2}
For any $\beta\in M_{m_1,m_2}(\Z)$,
the function $F_\beta$ defined in \eqref{MAINTHMpf6}
is a $G$-invariant positive linear functional on $C_c(S(\beta))$.
\end{lem}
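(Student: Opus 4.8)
The plan is to prove the three claimed properties of $F_\beta$ separately, reducing each to facts that are either already established or follow from the structure of $F$ in Lemma~\ref{GinvfunctionalLEM}. First I would record that $F_\beta$ is well-defined and finite on $C_c(S(\beta))$: given $\rho\in C_c(S(\beta))$, extend $|\rho|$ to a function on all of $\scrM$ that is bounded and compactly supported (e.g.\ by multiplying a continuous bounded extension by a cutoff that is $1$ on $\supp(\rho)$), and observe that the defining sum in \eqref{MAINTHMpf6} for this extension is term-by-term dominated by the sum in \eqref{MAINTHMpf1}, hence by \eqref{GinvfunctionalLEMpf1} with $K=\supp(\rho)$ it converges absolutely. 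This also immediately gives positivity: if $\rho\geq0$ then every term in \eqref{MAINTHMpf6} is $\geq0$, so $F_\beta(\rho)\geq0$. Linearity is clear once absolute convergence is in hand, since for fixed $L$ the inner double sum is linear in $\rho$ and one may interchange it with $\int_{X_n}\,d\mu(L)$.

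The substantive point is $G$-invariance. Here the strategy is to imitate the last part of the proof of Lemma~\ref{GinvfunctionalLEM}. Fix $g\in G$ and $\rho\in C_c(S(\beta))$, and set $\rho_g(x,y):=\rho(g\langle x,y\rangle)$; note that $\rho_g\in C_c(S(\beta))$ because the $G$-action \eqref{Gaction} preserves $S(\beta)$ (it preserves each $U_{m_i}$ and satisfies $(gx)\transp(g^{-\mathsf{T}}y)=x\transp y$). For any $L\in X_n$ one has $g^{-\mathsf{T}}L^*=(gL)^*$, and the maps $x\mapsto gx$ and $y\mapsto g^{-\mathsf{T}}y$ restrict to bijections $P_{L,m_1}\to P_{gL,m_1}$ and $(L^*)^{m_2}\cap U_{m_2}\to((gL)^*)^{m_2}\cap U_{m_2}$ that respect the constraint $x\transp y=\beta$ (since $(gx)\transp(g^{-\mathsf{T}}y)=x\transp y$). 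Therefore the inner double sum of $\rho_g$ over $L$ equals the inner double sum of $\rho$ over $gL$; substituting this into \eqref{MAINTHMpf6} and using $G$-invariance of $\mu$ gives $F_\beta(\rho_g)=F_\beta(\rho)$.

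I expect the main obstacle to be purely bookkeeping rather than conceptual: one must check carefully that the sum defining $F_\beta$ is absolutely convergent for an arbitrary $\rho\in C_c(S(\beta))$ — not merely for a nonnegative $\rho$ extending to a nice function on $\scrM$ — so that the rearrangements of summation order implicit in the invariance argument are legitimate. This is handled by the domination against \eqref{GinvfunctionalLEMpf1} described above, which only uses that $K=\supp(\rho)$ is a compact subset of $\scrM$ (it is, being a compact subset of the closed subset $S(\beta)\subset\scrM$). Once absolute convergence is secured, each of linearity, positivity, and $G$-invariance is routine, and the proof is complete.
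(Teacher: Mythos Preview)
Your proposal is correct and follows essentially the same approach as the paper: domination by \eqref{GinvfunctionalLEMpf1} with $K=\supp(\rho)$ for well-definedness, and the bijection $(x,y)\mapsto(gx,g^{-\mathsf{T}}y)$ between the summation sets for $L$ and $gL$ (preserving the constraint $x\transp y=\beta$) together with $G$-invariance of $\mu$ for the invariance. One small correction: $S(\beta)$ is \emph{not} closed in $\scrM$ (points can escape $U_{m_1}\times U_{m_2}$), but this doesn't matter---$\supp(\rho)$ is compact in $S(\beta)$ by hypothesis, and compactness is intrinsic, so it is automatically compact in $\scrM$.
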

\begin{proof}
For any $\rho\in C_c(S(\beta))$, the expression giving $F_\beta(|\rho|)$
is majorized by the expression in \eqref{GinvfunctionalLEMpf1}
with $K=\supp(\rho)$;
hence $F_\beta$ is a well-defined positive linear functional on $C_c(S(\beta))$.
The $G$-invariance is verified by the same type of computation as in the proof of Lemma~\ref{GinvfunctionalLEM},
the main point being that for any $g\in G$ and $L\in X_n$ we have
\begin{align*}
\sum_{x\in P_{L,m_1}}
\sum_{\substack{y\in (L^*)^{m_2}\cap U_{m_2} %
\\ (x\transp y=\beta)}}
\rho\bigl(gx,g^{-\mathsf{T}}y\bigr)
=\sum_{x\in P_{gL,m_1}}\,
\sum_{\substack{y\in ((gL)^*)^{m_2}\cap U_{m_2} %
\\ (x\transp y=\beta)}}
\rho(x,y).
\end{align*}
\end{proof}

Next we will prove that also 
the measure $\eta_\beta$ introduced in \eqref{NUbetaDEFnew}
is $G$-invariant.
For this we will require the following simple auxiliary lemma.
We equip $\R^n$ with its standard Euclidean structure;
then any non-zero linear subspace $V$ of $\R^n$ inherits a structure 
as a Euclidean subspace, and we denote by $\vol_V$ the corresponding
Lebesgue volume measure on $V$.
For any $g\in\GL_n(\R)$ we let 
$\delta(g,V)>0$ be the volume scaling factor of 
the linear map $g\big|_V:\:\vecv\mapsto g\vecv$
from $V$ onto $gV$,
i.e.\ the number $\delta$ such that 
$\vol_{gV}(gE)=\delta\cdot\vol_V(E)$ for any Borel set $E\subset V$.
\begin{lem}\label{JACOBIANauxlem}
For any $g\in G$ and any non-trivial subspace $V\subset\R^n$,
$\delta(g\transp,(gV)^\perp)=\delta(g,V)^{-1}$.
\end{lem}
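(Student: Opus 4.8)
The plan is to reduce the identity to a statement about determinants of well-chosen matrix blocks, using orthonormal bases adapted to the subspaces involved. Let $V\subset\R^n$ be non-trivial, say $\dim V=d$ with $0<d<n$, and pick an orthonormal basis $\vece_1,\ldots,\vece_d$ of $V$ and an orthonormal basis $\vece_{d+1},\ldots,\vece_n$ of $V^\perp$. First I would compute $\delta(g,V)$: the vectors $g\vece_1,\ldots,g\vece_d$ span $gV$, and the volume they span (as a $d$-parallelotope in $\R^n$, which is exactly the volume scaling relative to the unit cube in $V$) is $\dd(gP)$ where $P=(\vece_1\cdots\vece_d)\in M_{n,d}(\R)$; that is, $\delta(g,V)=\sqrt{\det\bigl((gP)\transp gP\bigr)}=\sqrt{\det\bigl(P\transp g\transp g P\bigr)}$. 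Writing $Q=(\vece_{d+1}\cdots\vece_n)\in M_{n,n-d}(\R)$, the analogous formula for $g\transp$ acting on $(gV)^\perp$ will require first identifying an orthonormal basis of $(gV)^\perp$; here I would use that $(gV)^\perp$ is the orthogonal complement of the column space of $gP$, so its orthonormal bases are obtained by Gram–Schmidt from any spanning set, and it is cleaner to work with $g^{-\mathsf T}Q$, whose columns span $(gV)^\perp$ (since for $\vecv\in V^\perp$ and $\vecw\in V$, $(g^{-\mathsf T}\vecv)\cdot(g\vecw)=\vecv\cdot\vecw=0$).

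The key algebraic step is then the following. Since $\vece_1,\ldots,\vece_n$ is an orthonormal basis of $\R^n$, the matrix $O:=(P\mid Q)\in M_n(\R)$ is orthogonal, so $\det(g\transp g)=\det(O\transp g\transp g O)$, and writing $g\transp g O=(g\transp gP\mid g\transp gQ)$ and expanding in the block basis, one gets
\begin{align*}
\det(g\transp g)=\det(O\transp g\transp g O)=\det\begin{pmatrix}P\transp g\transp gP & P\transp g\transp gQ\\ Q\transp g\transp gP & Q\transp g\transp gQ\end{pmatrix}.
\end{align*}
Now $\det(g\transp g)=\det(g)^2=1$ because $g\in G=\SL_n(\R)$. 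The Schur complement identity for this block matrix, together with the parallel computation that $\delta(g^{-\mathsf T},(gV)^\perp)^2$ equals the corresponding Gram determinant for the columns of $g^{-\mathsf T}Q$ inside the subspace they span, should produce exactly the reciprocal relationship. Concretely, I expect to show $\delta(g,V)^2=\det\bigl(P\transp g\transp gP\bigr)$ and $\delta(g^{-\mathsf T},(gV)^\perp)^2=\det\bigl(Q\transp g^{-1}g^{-\mathsf T}Q\bigr)^{-1}\cdot(\text{something})$ — more precisely, after normalizing the spanning set $g^{-\mathsf T}Q$ of $(gV)^\perp$ by its Gram matrix, the scaling factor of $g\transp$ on that subspace works out to $\det\bigl(Q\transp(g\transp g)Q\bigr)^{1/2}\big/\det\bigl(Q\transp(g\transp g)^{-1}Q\bigr)^{1/2}$ or a similar ratio, and then one invokes the classical identity relating a principal minor of a matrix of determinant $1$ to the complementary principal minor of its inverse: for $M\in\GL_n(\R)$ with $\det M=1$, the $V^\perp$-block of $M$ and the $V$-block of $M^{-1}$ have equal determinants. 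Applying this with $M=g\transp g$ closes the computation.

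The main obstacle I anticipate is bookkeeping the two Gram-matrix normalizations cleanly so that the reciprocal falls out without sign or indexing errors: $\delta(g,V)$ involves $g$ restricted to $V$ (basis $P$), whereas $\delta(g\transp,(gV)^\perp)$ involves $g\transp$ restricted to $(gV)^\perp$, and $(gV)^\perp$ does not have an obvious orthonormal basis in terms of the original data — one must pass through $g^{-\mathsf T}Q$ and divide by the square root of its Gram determinant. Once that normalization is set up correctly, everything reduces to the principal-minor/complementary-minor identity for a matrix of determinant $1$, which is elementary (e.g.\ via the cofactor formula for the inverse, or via $\det\begin{psmallmatrix}A&B\\C&D\end{psmallmatrix}=\det(A)\det(D-CA^{-1}B)$ applied to both $M$ and $M^{-1}$). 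I would present the proof by first stating this minor identity as a one-line sublemma, then doing the orthonormal-basis setup, then the two scaling-factor computations, and finally combining them.
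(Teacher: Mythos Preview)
Your approach is correct but genuinely different from the paper's. The paper exploits the invariance $\delta(k,W)=1$ for $k\in O(n)$ to replace $\langle g,V\rangle$ by $\langle k_1gk_2,\,k_2^{-1}V\rangle$; choosing $k_1,k_2$ so that the new $V$ is spanned by the first $d$ standard basis vectors and is $g$-invariant forces $g$ into block upper-triangular form $\smatr{\alpha}{\beta}{0}{\gamma}$, after which $\delta(g,V)=|\det\alpha|$, $\delta(g\transp,(gV)^\perp)=|\det\gamma|$, and $\det\alpha\det\gamma=1$ finishes it in one line. Your route stays in the original coordinates and computes both scaling factors as Gram determinants: $\delta(g,V)^2=\det(P\transp g\transp gP)$ and (after normalizing the spanning set $g^{-\mathsf T}Q$ of $(gV)^\perp$) $\delta(g\transp,(gV)^\perp)^2=\det\bigl(Q\transp(g\transp g)^{-1}Q\bigr)^{-1}$, so the claim becomes $\det(P\transp MP)=\det(Q\transp M^{-1}Q)$ for $M=g\transp g$ of determinant~$1$, which is exactly the Schur-complement identity you cite. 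The paper's reduction is shorter and avoids the bookkeeping you flag as the main obstacle; your argument is more hands-on but has the virtue of making explicit the algebraic identity underlying the lemma. Either way the content is the single fact $\det g=1$.
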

\begin{proof}
Note that $\delta(k,W)=1$ for any $k\in O(n)$ and any non-zero subspace $W\subset\R^n$.
Using this fact one verifies that
for any $k_1,k_2\in O(n)$
we have $\delta(g,V)=\delta(k_1gk_2,k_2^{-1}V)$
and $\delta(g\transp,(gV)^\perp)=\delta(k_2^{-1}g\transp k_1^{-1},k_1(gV)^\perp)$,
and here $k_2^{-1}g\transp k_1^{-1}={\tg}\transp$ 
and $k_1(gV)^\perp:=(\tg\tV)^\perp$
with $\tg:=k_1gk_2$ and $\tV:=k_2^{-1}V$.
Hence it suffices to prove the lemma with $\tg$ and $\tV$ in the place of $g$ and $V$.
By choosing $k_1,k_2$ appropriately, we may assume 
that $\tV$ equals the span of the first $d=\dim V$ standard basis vectors of $\R^n$
and also $\tg\tV=\tV$;
this means that in block matrix notation we have $\tg=\smatr{\alpha}{\beta}0{\gamma}$
for some $\alpha\in\GL_d(\R)$, $\beta\in M_{d,n-d}(\R)$, $\gamma\in\GL_{n-d}(\R)$.
Now $\delta(\tg,\tV)=|\det\alpha|$
and $\delta(\tg\transp,(\tg\tV)^\perp)=|\det(\gamma\transp)|=|\det\gamma|$,
and the lemma follows from the fact that $\det\alpha\det\gamma=\det\tg=\det g=1$.
\end{proof}

\begin{lem}\label{etabetaGinvLEM}
For any $\beta\in M_{m_1,m_2}(\R)$,
the measure $\eta_\beta$ on $S(\beta)$ is $G$-invariant.
\end{lem}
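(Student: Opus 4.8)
The plan is to prove the stronger (and equivalent) statement that
\begin{align*}
\int_{S(\beta)}\rho\bigl(g\langle x,y\rangle\bigr)\,d\eta_\beta(x,y)=\int_{S(\beta)}\rho\,d\eta_\beta
\end{align*}
for every $g\in G$ and every non-negative Borel function $\rho$ on $S(\beta)$ (the case $\rho=\chi_E$ being precisely $G$-invariance of $\eta_\beta$), by unwinding the definition \eqref{NUbetaDEFnew} and performing two successive changes of variables. Recall from \eqref{Gaction} that $g\langle x,y\rangle=\langle gx,g^{-\mathsf{T}}y\rangle$, that for $x\in U_{m_1}$ the column span $V_x:=x\R^{m_1}$ is an $m_1$-dimensional subspace of $\R^n$, and that $\dd(x)$ is the $m_1$-dimensional volume of the parallelotope spanned by the columns of $x$ inside $V_x$.

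First I would handle the inner integral in \eqref{NUbetaDEFnew}, for fixed $x\in U_{m_1}$, by the substitution $y'=g^{-\mathsf{T}}y$. The affine space $S(\beta)_x'$ is a coset of the linear subspace $\{y\in M_{n,m_2}(\R):x\transp y=0\}$, which under the identification $M_{n,m_2}(\R)=(\R^n)^{m_2}$ is exactly $(V_x^\perp)^{m_2}$; moreover $g^{-\mathsf{T}}V_x^\perp=(gV_x)^\perp=V_{gx}^\perp$, since $(g^{-\mathsf{T}}\vecv)\cdot(g\vecw)=\vecv\cdot\vecw$. Hence $y\mapsto g^{-\mathsf{T}}y$ is an affine-linear isomorphism of $S(\beta)_x'$ onto $S(\beta)_{gx}'$ which maps $U_{m_2}$ onto itself (it preserves rank), and so maps $S(\beta)_x$ onto $S(\beta)_{gx}$ up to an $\eta_{\beta,x}$-null set; its volume distortion factor between these $m_2(n-m_1)$-dimensional Euclidean spaces equals $\delta(g^{-\mathsf{T}},V_x^\perp)^{m_2}$. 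Applying Lemma \ref{JACOBIANauxlem} with $g^{-1}$ in place of $g$ and $gV_x$ in place of $V$ gives $\delta(g^{-\mathsf{T}},V_x^\perp)=\delta(g^{-1},gV_x)^{-1}=\delta(g,V_x)$, and $\delta(g,V_x)=\dd(gx)/\dd(x)$ by the volume interpretation of $\dd$. The change-of-variables formula therefore yields
\begin{align*}
\int_{S(\beta)_x}\rho\bigl(gx,g^{-\mathsf{T}}y\bigr)\,d\eta_{\beta,x}(y)=\Bigl(\frac{\dd(x)}{\dd(gx)}\Bigr)^{m_2}\int_{S(\beta)_{gx}}\rho(gx,y')\,d\eta_{\beta,gx}(y').
\end{align*}

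Substituting this into \eqref{NUbetaDEFnew}, the factor $(\dd(x)/\dd(gx))^{m_2}$ combines with the weight $\dd(x)^{-m_2}$ to leave precisely $\dd(gx)^{-m_2}$, so that $\int_{S(\beta)}\rho(g\langle x,y\rangle)\,d\eta_\beta(x,y)$ becomes $\int_{U_{m_1}}\bigl(\int_{S(\beta)_{gx}}\rho(gx,y')\,d\eta_{\beta,gx}(y')\bigr)\dd(gx)^{-m_2}\,dx$. Finally I would substitute $x'=gx$ in this outer integral: the Jacobian of $x\mapsto gx$ on $M_{n,m_1}(\R)\cong(\R^n)^{m_1}$ is $(\det g)^{m_1}=1$, and $g$ maps $U_{m_1}$ bijectively onto itself, so the integral is unchanged and equals $\int_{U_{m_1}}\bigl(\int_{S(\beta)_{x'}}\rho(x',y')\,d\eta_{\beta,x'}(y')\bigr)\dd(x')^{-m_2}\,dx'=\int_{S(\beta)}\rho\,d\eta_\beta$, as desired. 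All of the substitutions above are legitimate since the maps involved are smooth diffeomorphisms (affine-linear isomorphisms, in the inner step) between open subsets of Euclidean spaces, so the classical change-of-variables formula applies to non-negative Borel integrands.

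I expect the one genuinely delicate point to be the Jacobian bookkeeping in the inner integral, in particular the identity $\delta(g^{-\mathsf{T}},V^\perp)=\delta(g,V)$ extracted from Lemma \ref{JACOBIANauxlem}: this says that the dual action $g\mapsto g^{-\mathsf{T}}$ distorts volumes on $V^\perp$ by exactly the factor by which $g$ distorts volumes on $V$, and it is precisely this matching that makes the weight $\dd(x)^{-m_2}$ in \eqref{NUbetaDEFnew} the correct one for $G$-invariance. Once this identity is in place, the remaining steps are routine substitutions using $\det g=1$.
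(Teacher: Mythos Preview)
Your proof is correct and follows essentially the same approach as the paper's: both unwind the definition \eqref{NUbetaDEFnew}, handle the inner integral via the Jacobian identity from Lemma~\ref{JACOBIANauxlem} (you apply it with $g^{-1}$ and $gV_x$ to get $\delta(g^{-\mathsf{T}},V_x^\perp)=\delta(g,V_x)$, the paper applies it directly to get the equivalent $\delta(g\transp,(gV)^\perp)=\delta(g,V)^{-1}$), use $\delta(g,V_x)=\dd(gx)/\dd(x)$, and then substitute $x'=gx$ in the outer integral with Jacobian $1$. The only cosmetic differences are that the paper works with characteristic functions of Borel sets while you work with general non-negative Borel $\rho$, and that your ``up to an $\eta_{\beta,x}$-null set'' qualifier is in fact unnecessary since $g^{-\mathsf{T}}$ carries $S(\beta)_x$ exactly onto $S(\beta)_{gx}$.
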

\begin{proof}
Let $g\in G$ and let $E$ be a Borel subset of $S(\beta)$.
Then by \eqref{NUbetaDEFnew},
\begin{align}\label{etabetaGinvLEMpf10}
\eta_\beta(g^{-1}E)=\int_{U_{m_1}}\int_{S(\beta)_x}\chi_E\bigl(gx,(g\transp)^{-1}y\bigr)\,d\eta_{\beta,x}(y)\,\frac{dx}{\dd(x)^{m_2}}.
\end{align}
To rewrite this expression,
consider any fixed $x\in U_{m_1}$, and set $V=x\R^{m_1}\subset\R^n$.
Then the map $z\mapsto g\transp z$ 
is a diffeomorphism
from $S(\beta)_{gx}$ onto $S(\beta)_x$,
and since $S(\beta)_{gx}$ and $S(\beta)_x$
are translates of the subspaces $((gV)^\perp)^{m_2}$ and $(V^\perp)^{m_2}$,
respectively,
inside $M_{n,m_2}(\R)=(\R^n)^{m_2}$,
we have for any Borel subset $E\subset S(\beta)_{gx}$:
\begin{align*}
\eta_{\beta,x}(g\transp E)=\delta(g\transp,(gV)^\perp)^{m_2}\eta_{\beta,gx}(E)
=\delta(g,V)^{-m_2}\eta_{\beta,gx}(E),
\end{align*}
where the last equality holds by Lemma \ref{JACOBIANauxlem}.
Hence
\begin{align}\label{etabetaGinvLEMpf11}
\int_{S(\beta)_x}\chi_E\bigl(gx,(g\transp)^{-1}y\bigr)\,d\eta_{\beta,x}(y)
=\delta(g,V)^{-m_2}\int_{S(\beta)_{gx}}\chi_E\bigl(gx,z\bigr)\,d\eta_{\beta,gx}(z).
\end{align}
Using this formula in \eqref{etabetaGinvLEMpf11},
together with the fact that $\delta(g,V)\,\dd(x)=\dd(gx)$,
we conclude:
\begin{align*}
\eta_\beta(g^{-1}E)=\int_{U_{m_1}}\int_{S(\beta)_{gx}}\chi_E\bigl(gx,z\bigr)\,d\eta_{\beta,gx}(z)\,\frac{dx}{\dd(gx)^{m_2}}
=\eta_\beta(E),
\end{align*}
where the last equality follows by substiting $x_{\new}:=gx$
and comparing with \eqref{NUbetaDEFnew}.
\end{proof}

Next our goal is to show that \textit{every} $G$-invariant regular Borel measure
on $S(\beta)$ is a constant multiple of $\eta_\beta$.
As we explain below, this is a standard consequence of %
the following lemma.
\begin{lem}\label{GtransitiveonSbetaLEM}
For any $\beta\in M_{m_1,m_2}(\R)$,
the action of $G$ on $S(\beta)$ is transitive.
\end{lem}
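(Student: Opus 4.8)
The plan is to realize $S(\beta)$ as a $G$-space fibered over $U_{m_1}$ via the projection $p\colon S(\beta)\to U_{m_1}$, $\langle x,y\rangle\mapsto x$, which is $G$-equivariant since $g\langle x,y\rangle=\langle gx,g^{-\mathsf{T}}y\rangle$ by \eqref{Gaction}; transitivity of $G$ on $S(\beta)$ then follows from transitivity of $G$ on the base together with transitivity of a point-stabilizer on one fiber. For the base, $G=\SL_n(\R)$ acts transitively on $U_{m_1}$: given $x,x'\in U_{m_1}$, extend the columns of each to a basis of $\R^n$ (possible since $m_1<n$), let $h\in\GL_n(\R)$ carry the first basis to the second (so $hx=x'$), and rescale one of the $n-m_1\geq1$ auxiliary vectors of the target basis to arrange $\det h=1$. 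Thus it remains to fix $x_0:=\scmatr{I_{m_1}}{0}\in U_{m_1}$ and show that $G_{x_0}:=\{g\in G\col gx_0=x_0\}$ acts transitively on $p^{-1}(x_0)$. Writing $N:=n-m_1$ (so $N>m_2$, by hypothesis) and splitting $\R^n=\R^{m_1}\oplus\R^N$, and noting that $x_0\transp y$ is simply the block of the top $m_1$ rows of $y$, this fiber is $\bigl\{\langle x_0,\cmatr{\beta}{y'}\rangle\col y'\in M_{N,m_2}(\R),\ \rank\cmatr{\beta}{y'}=m_2\bigr\}$.

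Next I would compute the $G_{x_0}$-action explicitly. One has $G_{x_0}=\bigl\{\smatr{I_{m_1}}{B}{0}{C}\col B\in M_{m_1,N}(\R),\ C\in\SL_N(\R)\bigr\}$, and from $\smatr{I_{m_1}}{B}{0}{C}^{-\mathsf{T}}=\smatr{I_{m_1}}{0}{-C^{-\mathsf{T}}B\transp}{C^{-\mathsf{T}}}$ one reads off that this element sends $\langle x_0,\cmatr{\beta}{y'}\rangle$ to $\langle x_0,\cmatr{\beta}{C^{-\mathsf{T}}(y'-B\transp\beta)}\rangle$. So on the free block $y'$ the action is $y'\mapsto Dy'+E\beta$, where $D:=C^{-\mathsf{T}}$ runs over all of $\SL_N(\R)$ and, for each such $D$, the matrix $E:=-C^{-\mathsf{T}}B\transp$ runs over all of $M_{N,m_1}(\R)$. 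Transitivity on the fiber is thus equivalent to the linear-algebra statement: for any $y_1',y_2'\in M_{N,m_2}(\R)$ with $\rank\cmatr{\beta}{y_i'}=m_2$, there exist $D\in\SL_N(\R)$ and $E\in M_{N,m_1}(\R)$ with $Dy_1'+E\beta=y_2'$.

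To prove this, set $r:=\rank\beta$ and choose $\bar P\in M_{m_2,m_2-r}(\R)$ whose columns form a basis of $\ker(\beta\colon\R^{m_2}\to\R^{m_1})$. Since $\ker\beta$ is the orthogonal complement of the row space of $\beta$, the set $\{E\beta\col E\in M_{N,m_1}(\R)\}$ — namely the $N\times m_2$ matrices all of whose rows lie in the row space of $\beta$ — equals $\{M\in M_{N,m_2}(\R)\col M\bar P=0\}$. Moreover, injectivity of $\cmatr{\beta}{y_i'}\colon\R^{m_2}\to\R^n$ together with $\beta\bar P=0$ forces $\cmatr{0}{y_i'\bar P}=\cmatr{\beta}{y_i'}\bar P$ to have rank $m_2-r$, so each $y_i'\bar P$ has full column rank in $M_{N,m_2-r}(\R)$. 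As $N>m_2\geq m_2-r$, the same basis-extension argument as above shows that $\SL_N(\R)$ acts transitively on full-column-rank matrices in $M_{N,m_2-r}(\R)$ by left multiplication, so we may pick $D\in\SL_N(\R)$ with $D(y_1'\bar P)=y_2'\bar P$; then $(Dy_1'-y_2')\bar P=0$, hence $Dy_1'-y_2'=E\beta$ for some $E$, and $-E$ solves the system. Assembling everything: given $\langle x_1,y_1\rangle,\langle x_2,y_2\rangle\in S(\beta)$, choose $g,g'\in G$ with $gx_1=g'x_2=x_0$; then $g\langle x_1,y_1\rangle$ and $g'\langle x_2,y_2\rangle$ lie in $p^{-1}(x_0)$, so $h(g\langle x_1,y_1\rangle)=g'\langle x_2,y_2\rangle$ for some $h\in G_{x_0}$, whence $g'^{-1}hg$ carries $\langle x_1,y_1\rangle$ to $\langle x_2,y_2\rangle$. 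The one step that requires genuine care is the third: feeding the rank hypothesis on $\cmatr{\beta}{y_i'}$ in at the right place, and checking the chain of inequalities $N=n-m_1>m_2\ge m_2-r$ — which is exactly where the hypothesis $n>m_1+m_2$ is used — so that the auxiliary transitivity claim for $\SL_N(\R)$ applies (the case $r=m_2$, where $\bar P$ is an empty matrix, being trivial); the remaining verifications are routine.
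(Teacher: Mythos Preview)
Your proof is correct and follows essentially the same approach as the paper's: both reduce to the base point $x_0=\scmatr{I_{m_1}}{0}$ via transitivity of $G$ on $U_{m_1}$, identify the stabilizer $G_{x_0}$ as the block upper-triangular subgroup $\smatr{I_{m_1}}{B}{0}{C}$ with $C\in\SL_{n-m_1}(\R)$, and then show this stabilizer acts transitively on the fiber $\bigl\{\scmatr{\beta}{y'}\in U_{m_2}\bigr\}$ using the hypothesis $n-m_1>m_2$. The only difference is in the execution of the fiber step: the paper argues directly via row spaces (adding vectors from the row space $V$ of $\beta$ to the rows of $y_2$ to achieve full row span, then applying a single $\SL_{n-m_1}$ element), whereas you dualize and work with $\ker\beta$ via the matrix $\bar P$, which gives a slightly cleaner bookkeeping of exactly which part of $y'$ is fixed by the $E\beta$-translations and which part is moved by $\SL_N$.
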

\begin{proof}
We will use block matrix notation.
Fix an arbitrary matrix $w_1$ in $M_{n-m_1,m_2}(\R)$ of full rank,
that is, whose row space equals $\R^{m_2}$,
and set $z=\cmatr{I_{m_1}}{0}\in M_{n,m_1}(\R)$ 
and $w=\cmatr{\beta}{w_1}\in M_{n,m_2}(\R)$.
Then $\langle z,w\rangle\in S(\beta)$.
It now suffices to prove that given an 
arbitrary element $\langle x,y\rangle\in S(\beta)$,
there exists $g\in G$ such that $g\langle x,y\rangle=\langle z,w\rangle$.
But the row space of $x$ equals $\R^{m_1}$
since $x\in U_{m_1}$;
hence there exists
$g\in G$ such that $gx=z$.
Hence we may just as well assume $x=z$
from the start.
It then follows from $\langle x,y\rangle\in S(\beta)$ that 
$y$ has the block decomposition $y=\cmatr{\beta}{y_2}$
for some $y_2\in M_{n-m_1,m_2}(\R)$.
Let $V\subset\R^{m_2}$ be the row space of $\beta$;
then $y\in U_{m_2}$ means that the row vectors of $y_2$ together with $V$ span all $\R^{m_2}$.
Using also $n-m_1>m_2$, it follows that there is a way to add
appropriate vectors from $V$ to the rows of $y_2$ so as to obtain a matrix with row space $\R^{m_2}$;
in other words, there exists some 
$a\in M_{n-m_1,m_1}(\R)$ such that 
the row space of $y_2+a\beta$ equals $\R^{m_2}$.
Then there is some $b\in\SL_{n-m_1}(\R)$ such that
$y_2+a\beta=bw_1$.
Setting now $g=\matr{I_{m_1}}{-a\transp}0{b\transp}\in G$,
we have $gz=z$ and $g\transp w=y$
and hence $g\langle x,y\rangle=\langle z,w\rangle$.
\end{proof}
It follows from Lemma \ref{GtransitiveonSbetaLEM} that for any $p\in S(\beta)$,
if $H=H_{\beta,p}=\{g\in G\col gp=p\}$
is the isotropy group at $p$,
then the map $gH\mapsto gp$ 
is a diffeomorphism from $G/H$ onto $S(\beta)$
\cite[Theorem 3.62]{fW83}.
Thus $F_\beta$ can be viewed as a $G$-invariant positive linear functional on
$C_c(G/H)$,
or equivalently a $G$-invariant regular Borel measure on $G/H$.
Now by a basic result on invariant measures on homogeneous spaces
(see e.g.\ \cite[Lemma 1.4]{mR72}),
it follows that $F_\beta$ is the \textit{unique} $G$-invariant
regular Borel measure %
on $S(\beta)$, up to a scalar multiple.
However, by Lemma \ref{etabetaGinvLEM},
the same is true for $\eta_\beta$;
hence we conclude that 
$F_\beta$ is a constant multiple of $\eta_\beta$,
for every $\beta\in M_{m_1,m_2}(\Z)$.

The next proposition
gives the explicit formula for the constant of proportionality;
in fact it turns out to be indendent of $\beta$:
\begin{prop}\label{cbetaexplPROP}
For any $\beta\in M_{m_1,m_2}(\Z)$
and any $\rho\in C_c(S(\beta))$,
\begin{align*}
F_\beta(\rho)=\frac1{\zeta(n)\zeta(n-1)\cdots\zeta(n-m_1+1)}\,\eta_\beta(\rho).
\end{align*}
\end{prop}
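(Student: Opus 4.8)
The proportionality $F_\beta=c\,\eta_\beta$ for some constant $c>0$ is already in hand: Lemma~\ref{GtransitiveonSbetaLEM} gives transitivity, Lemma~\ref{etabetaGinvLEM} gives $G$-invariance of $\eta_\beta$, and uniqueness of the invariant measure does the rest. The plan is to pin down $c$ by reducing $F_\beta$, via a disintegration of $\mu$ of the type used to prove Theorem~\ref{ROGERSbasicformulaTHM}, to two applications of that theorem --- one in dimension $n$ and one in dimension $n-m_1$ --- with the elementary identity $\sum_{A\in\fW_{m}}(\det A)^{-N}=\zeta(N)\zeta(N-1)\cdots\zeta(N-m+1)$ (valid for $N>m$) collapsing all lower-dimensional contributions. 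First I would reduce to the case in which $\beta$ is in Smith normal form, $\beta=\smatr{D}{0}{0}{0}$ with $D=\diag(d_1,\dots,d_r)$, $d_i\ge1$, $r=\rank\beta$: for $\gamma_1\in\GL_{m_1}(\Z)$ and $\gamma_2\in\GL_{m_2}(\Z)$ the substitutions $x\mapsto x\gamma_1^{-1}$, $y\mapsto y\gamma_2^{-1}$ turn $F_\beta$ into $F_{\gamma_1^{\mathsf{T}}\beta}$ and $F_{\beta\gamma_2}$ applied to correspondingly transformed test functions (they merely permute $\Z$-bases, hence preserve $P_{L,m_1}$), and by Lemma~\ref{etabetatransfLEM} they act the same way on $\eta_\beta$; so it suffices to treat one representative of each $\GL_{m_1}(\Z)\times\GL_{m_2}(\Z)$-orbit.

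Next I would make the inner $y$-sum explicit. For $L\in X_n$ and $x\in P_{L,m_1}$, put $V=\Span_\R(x)$ and $M=\Span_\Z(x)$; since $x$ extends to a $\Z$-basis of $L$, the map $w\mapsto x^{\mathsf{T}}w$ from $L^*$ onto $\Z^{m_1}$ is surjective, so for each column $\beta^{(j)}$ of $\beta$ the set $\{w\in L^*\col x^{\mathsf{T}}w=\beta^{(j)}\}$ is a non-empty coset of $\Lambda:=L^*\cap V^\perp$. The geometric input is that $\Lambda$ is exactly the dual, inside $V^\perp\cong\R^{n-m_1}$, of the orthogonal projection of $L$ to $V^\perp$, hence a full-rank lattice there of covolume $\dd(x)$. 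Consequently $F_\beta(\rho)=\int_{X_n}\sum_{x\in P_{L,m_1}}h_L(x)\,d\mu(L)$, where $h_L(x)$ is the sum of $\rho(x,\cdot)$ over the linearly independent $m_2$-tuples lying in a fixed coset of $(\Lambda)^{m_2}$ inside the affine space $S(\beta)'_x$.

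Then I would average over all $L$ containing $x$ as a primitive tuple, using the fibration of $X_n$ over the space of primitive $m_1$-dimensional sublattices: a lattice $L\supset M$ with $M$ primitive is encoded by the projected lattice $\overline L\subset V^\perp$ (covolume $\dd(x)^{-1}$) together with a ``shear'' $s$ in the compact torus $\mathrm{Hom}(\overline L,V/M)$, with fibre measure the product of Haar measure on this torus and the invariant probability measure on the rescaled space of lattices $\overline L$. Averaging $h_L(x)$ first over $s$: in Smith coordinates, for $j\le r$ the coset $\{w\in L^*\col x^{\mathsf{T}}w=\beta^{(j)}\}$ translates modulo $\Lambda$ by a point that is Haar-distributed over all of $V^\perp/\Lambda$, independently over such $j$, while for $j>r$ it does not move; moreover the condition that the full $m_2$-tuple be linearly independent reduces to linear independence of just its last $m_2-r$ entries. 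Carrying out the $s$-average, interchanging it with the lattice sum, and recognizing the resulting integral over the $r$ ``active'' directions as Lebesgue measure on $\prod_{j\le r}\{z\col x^{\mathsf{T}}z=\beta^{(j)}\}$, one is left with a sum over linearly independent $(m_2-r)$-tuples in $\Lambda$; averaging that over $\overline L$ (equivalently over $\Lambda$) by Theorem~\ref{ROGERSbasicformulaTHM} in $\R^{n-m_1}$ together with Lemma~\ref{PLmWmbijectionLEM}, and then applying the displayed identity with $N=n-m_1$ and $m=m_2-r$, one finds that the conditional average of $h_L(x)$ over the fibre equals precisely $\dd(x)^{-m_2}\int_{S(\beta)'_x}\rho(x,y)\,d\eta_{\beta,x}(y)$, a function of $x$ alone. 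Plugging this into Theorem~\ref{ROGERSbasicformulaTHM} with $k=m_1$ and comparing with \eqref{NUbetaDEFnew} yields $F_\beta(\rho)=\bigl(\zeta(n)\zeta(n-1)\cdots\zeta(n-m_1+1)\bigr)^{-1}\eta_\beta(\rho)$.

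The main obstacle is this $s$-average: one must verify that, conditional on the primitive tuple $x$, the family of cosets $\{w\in L^*\col x^{\mathsf{T}}w=\beta^{(j)}\}$ equidistributes in exactly the right way --- fully in the $r=\rank\beta$ directions and not at all in the complementary ones --- and then reconcile a sum over \emph{linearly independent} (not primitive) tuples in a \emph{coset} (not the full lattice) of $(\Lambda)^{m_2}$ with a clean Lebesgue integral. The identity for $\sum_{A\in\fW_m}(\det A)^{-N}$ is precisely what makes this reconciliation work, and it is also what makes the resulting constant come out independent of $\beta$.
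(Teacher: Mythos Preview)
Your approach is correct and genuinely different from the paper's. The paper does not disintegrate $\mu$ at all; instead it uses the proportionality $F_\beta=c_\beta\eta_\beta$ and pins down $c_\beta$ by evaluating both sides on the concrete family $\rho_R(x,y)=I(x\in E)\,\dd(x)^{m_2}\,I\bigl(y\in(\scrB_R^n)^{m_2}\bigr)$ and letting $R\to\infty$. An elementary lattice-point asymptotic (Lemma~\ref{R1R2limitLEM1}) gives the pointwise limit of the inner $y$-count, a dominated-convergence bound (Lemma~\ref{MAJORANTLEM1}) lets one pass the limit through $\int_{X_n}$, and what survives is $c_\beta\cdot\fV_{n-m_1}^{m_2}\vol(E)=\fV_{n-m_1}^{m_2}\int_{X_n}\#(P_{L,m_1}\cap E)\,d\mu$, which Theorem~\ref{ROGERSbasicformulaTHM} evaluates directly.

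Your route trades the counting lemmas for the parabolic disintegration of Haar measure (the machinery behind Macbeath--Rogers' proof of Theorem~\ref{ROGERSbasicformulaTHM}, not restated here) plus a second application of that theorem in dimension $n-m_1$. It is more structural and makes the $\beta$-independence of the constant transparent: the shear average turns the coset sums into Lebesgue integrals regardless of $\beta$, and the lower-dimensional zeta product cancels the $\fW_{m_2-r}$ sum exactly. Two points that deserve to be said outright in a full write-up. First, your claim that linear independence of $(y_1,\ldots,y_{m_2})$ collapses to that of $(y_{r+1},\ldots,y_{m_2})$ is a \emph{pointwise} fact in Smith coordinates --- any linear relation, projected to $V$, forces $c_j=0$ for $j\le r$ --- not merely an almost-everywhere statement after averaging. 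Second, the cosets for $j\le r$ are independently Haar precisely because in Smith form $\beta^{(j)}=d_je_j$ picks out only row $j$ of the shear, and multiplication by the nonzero integer $d_j$ preserves Haar on the torus; this is where the Smith reduction is genuinely needed. The paper's argument is more self-contained; yours explains more and in fact bypasses the need to know $F_\beta\propto\eta_\beta$ in advance.
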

We will prove Proposition \ref{cbetaexplPROP} in Section \ref{cbetaexplPROPpfsec}.
Here we show how the proof of Theorem \ref{MAINTHEOREMabs2}
is completed using Proposition \ref{cbetaexplPROP}.
Let us first note a general transformation formula for the measure $\eta_\beta$.
\begin{lem}\label{etabetatransfLEM}
For any $T_1\in\GL_{m_1}(\R)$, $T_2\in\GL_{m_2}(\R)$ and $\beta\in M_{m_1,m_2}(\R)$,
the map $\langle x,y\rangle\mapsto \langle xT_1,yT_2\rangle$ is a diffeomorphism of $S(\beta)$ onto
$S\bigl(T_1^{\mathsf{T}}\beta T_2\bigr)$,
and for any $f\in C_c\bigl(S(T_1^{\mathsf{T}}\beta T_2)\bigr)$:
\begin{align}\label{etabetatransfLEMres}
\int_{S(\beta)}f\bigl(xT_1,yT_2\bigr)\,d\eta_\beta(x,y)
=|\det T_1|^{m_2-n}|\det T_2|^{m_1-n}\int_{S(T_1^{\mathsf{T}}\beta T_2)}f\,d\eta_{T_1^{\mathsf{T}}\beta T_2}.
\end{align}
\end{lem}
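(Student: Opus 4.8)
The plan is to verify the statement by unwinding the definition \eqref{NUbetaDEFnew} of $\eta_\beta$ and tracking how each ingredient transforms under the substitution $\langle x,y\rangle\mapsto\langle xT_1,yT_2\rangle$. First I would check the diffeomorphism claim: if $x\in U_{m_1}$ and $y\in U_{m_2}$ with $x^{\mathsf T}y=\beta$, then $xT_1$ and $yT_2$ still have full rank (since $T_1,T_2$ are invertible), and $(xT_1)^{\mathsf T}(yT_2)=T_1^{\mathsf T}\beta T_2$, so the map sends $S(\beta)$ into $S(T_1^{\mathsf T}\beta T_2)$; its inverse is $\langle x,y\rangle\mapsto\langle xT_1^{-1},yT_2^{-1}\rangle$, so it is a diffeomorphism. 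It also matches up the fibration structure used to define the measures: $S(\beta)_x$ is carried onto $S(T_1^{\mathsf T}\beta T_2)_{xT_1}$.

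Next I would reduce to the case $T_1=I_{m_1}$, i.e.\ prove two ``one-sided'' versions of \eqref{etabetatransfLEMres} and compose them. For the right factor, fix $x\in U_{m_1}$ and consider the affine subspace $S(\beta)'_x=\{y\col x^{\mathsf T}y=\beta\}$ with its Euclidean Lebesgue measure $\eta_{\beta,x}$. The map $y\mapsto yT_2$ sends $S(\beta)'_x$ to $S(\beta T_2)'_x$; in the identification $M_{n,m_2}(\R)\cong(\R^n)^{m_2}$ this map is $z\mapsto z(T_2\otimes I_n)$ in suitable coordinates, with Jacobian determinant $|\det T_2|^n$ on all of $M_{n,m_2}(\R)$. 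Restricted to the subspace $S(\beta)'_x$ — whose tangent space is $(V^\perp)^{m_2}$ with $V=x\R^{m_1}$, an $(n-m_1)m_2$-dimensional space that is a direct sum of $m_2$ copies of $V^\perp$ — the volume scaling factor is $|\det T_2|^{\,n-m_1}$, because each of the $m_2$ copies of $V^\perp$ is acted on trivially and the ``mixing'' among the copies is via $T_2$, contributing $(\det T_2)^{\dim V^\perp}=(\det T_2)^{n-m_1}$. Hence $\eta_{\beta T_2,x}=|\det T_2|^{n-m_1}\,(\cdot\,T_2^{-1})_*\eta_{\beta,x}$, and since the outer factor $\dd(x)^{-m_2}\,dx$ in \eqref{NUbetaDEFnew} is untouched, integrating over $x\in U_{m_1}$ gives the right-factor case of \eqref{etabetatransfLEMres} with exponent $|\det T_2|^{m_1-n}$ and no $T_1$-factor.

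For the left factor (now with $T_2=I_{m_2}$), the map $\langle x,y\rangle\mapsto\langle xT_1,y\rangle$ sends $S(\beta)$ to $S(T_1^{\mathsf T}\beta)$, and I would compute $\eta_{T_1^{\mathsf T}\beta}$ by substituting $x_{\new}=xT_1$ in \eqref{NUbetaDEFnew}. Two things change: the outer Lebesgue measure $dx$ on $M_{n,m_1}(\R)$ scales by $|\det T_1|^{-n}$ (going from $x$ to $x_{\new}$), and the function $\dd(x)$ transforms by $\dd(xT_1)=|\det T_1|\,\dd(x)$ (immediate from \eqref{dddef}), so $\dd(x)^{-m_2}$ contributes a further $|\det T_1|^{m_2}$; meanwhile the inner fibre integral $\int_{S(\beta)_x}(\cdots)\,d\eta_{\beta,x}$ is insensitive to whether we call the base point $x$ or $xT_1$, since $S(\beta)_x=S(T_1^{\mathsf T}\beta)_{xT_1}$ as subsets of $M_{n,m_2}(\R)$ with the same Euclidean measure. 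Combining the two factors $|\det T_1|^{m_2-n}$ and then composing with the right-factor result yields \eqref{etabetatransfLEMres}. I expect the main obstacle to be the bookkeeping for the volume scaling factor of $z\mapsto z\,T_2$ restricted to the subspace $(V^\perp)^{m_2}$ — getting the exponent $n-m_1$ rather than $n$ correct — which is most cleanly handled by choosing an orthonormal basis adapted to $V^\perp$ so that the linear map becomes block-diagonal: $(n-m_1)$ identical blocks each equal to $T_2$ acting on an $m_2$-dimensional coordinate slot, plus trivial action on the complementary directions.
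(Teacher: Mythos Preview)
Your proposal is correct and follows exactly the approach the paper has in mind: the paper's entire proof is the single line ``Immediate from the definition of $\eta_\beta$, \eqref{NUbetaDEFnew}'', and what you have written is precisely the unpacking of that claim. Your two one-sided computations --- the Jacobian $|\det T_2|^{n-m_1}$ for $y\mapsto yT_2$ restricted to $(V^\perp)^{m_2}$, and the combination of $dx\to|\det T_1|^{-n}\,dx'$ with $\dd(x)^{-m_2}\to|\det T_1|^{m_2}\dd(x')^{-m_2}$ for the $T_1$ substitution --- are both correct and yield the stated exponents.
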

\begin{proof}
Immediate from the definition of $\eta_\beta$, \eqref{NUbetaDEFnew}.
\end{proof}

It follows from Proposition~\ref{cbetaexplPROP}
and the formula in \eqref{MAINTHMpf7},
that for any $\beta\in M_{m_1,m_2}(\Z)$ and $\rho\in C_c(S(\beta))$, 
the expression in \eqref{MAINTHMpf5} equals
\begin{align}\label{MAINTHMpf8}
\frac1{\zeta(n)\zeta(n-1)\cdots\zeta(n-m_1+1)}\sum_{A\in\fW_{\beta}}\int_{S(\tA\beta)}\rho(xA,y)\,d\eta_{\tA\beta}(x,y).
\end{align}
Here by Lemma \ref{etabetatransfLEM},
the integral over $S(\tA\beta)$ equals
$(\det A)^{m_2-n}\eta_\beta(\rho)$.
Recalling also \eqref{Wdef},
it follows that \eqref{MAINTHMpf8},
and hence also \eqref{MAINTHMpf5}, equals
$W(\beta)\,\eta_\beta(\rho)$.
We have also noted that the expression in
\eqref{MAINTHMpf4} equals
\eqref{MAINTHMpf5} added over all $\beta\in M_{m_1,m_2}(\Z)$.
Hence we conclude: %
\begin{align}\label{MAINTHMpf9}
&\int_{X_n}\sum_{x\in L^{m_1}\cap U_{m_1}}\sum_{y\in (L^*)^{m_2}\cap U_{m_2}}\rho(x,y)\,d\mu(L)
=\sum_{\beta\in M_{m_1,m_2}(\Z)}W(\beta)\,\eta_\beta(\rho).
\end{align}

Finally, using the formula 
\eqref{MAINTHMpf9}
to evaluate the integral appearing in the first line of
\eqref{MAINTHMpf10},
we finally obtain that the expression in 
\eqref{MAINTHMpf10}
equals the right-hand side of \eqref{MAINTHEOREMabs2res};
in other words the equality in \eqref{MAINTHEOREMabs2res} holds.
We have proved this for any $\rho\in C_c(\scrM)$,
and it is clear from the proof that all the iterated sums and integrals
in \eqref{MAINTHEOREMabs2res} are absolutely convergent for any such $\rho$.
Hence the equality in \eqref{MAINTHEOREMabs2res} can be viewed as
an equality between two regular Borel measures on $\scrM$,
integrated against the function $\rho$;
and it is then clear that the equality extends to the 
class of test functions
$\rho$ appearing in the statement of Theorem \ref{MAINTHEOREMabs2}.
\hfill$\square$ %

\subsection{Proof of the explicit formula for $F_\beta$}
\label{cbetaexplPROPpfsec}
In this section we prove Proposition \ref{cbetaexplPROP}.
As we have shown, this will also complete the proof of Theorem \ref{MAINTHEOREMabs2}.

We start by giving three %
lemmas.
For $L$ any lattice in $\R^n$ (of full rank, but not necessarily of covolume one)
we write $\lambda_i=\lambda_i(L)$ ($i=1,\ldots,n$)
for its successive minima with respect to the unit ball,
i.e.
\begin{align*}
\lambda_i(L):=\min\bigl\{\lambda\in\R_{\geq0}\col L\text{ contains $i$ linearly independent vectors of length $\leq\lambda$}\bigr\}.
\end{align*}
\begin{lem}\label{LATTICECOUNTBOUNDlem1}
For any lattice in $L$ in $\R^n$ and any $R>0$,
\begin{align*}
\#(L\cap\scrB_R^n)\asymp_n\prod_{i=1}^n\Bigl(1+\frac R{\lambda_i(L)}\Bigr).
\end{align*}
\end{lem}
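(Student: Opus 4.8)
The statement to prove is the lattice point counting bound
\[
\#(L\cap\scrB_R^n)\asymp_n\prod_{i=1}^n\Bigl(1+\frac R{\lambda_i(L)}\Bigr).
\]
This is a classical estimate, and my plan is to reduce it to the standard picture afforded by a \emph{directional} or Minkowski-reduced basis.

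\medskip

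\emph{Setup via a reduced basis.} First I would invoke Minkowski's second theorem together with the existence of a basis $\vecb_1,\ldots,\vecb_n$ of $L$ that is ``almost minimal'', in the sense that $|\vecb_i|\asymp_n\lambda_i(L)$ for each $i$ (such a basis exists whenever $n$ is fixed; one can take a Minkowski-reduced basis, or simply extract linearly independent vectors realizing the successive minima and correct them to a basis, paying only an $n$-dependent constant). Writing $L$ in coordinates adapted to this basis, the Gram matrix is comparable to $\diag(\lambda_1^2,\ldots,\lambda_n^2)$ up to factors depending only on $n$; equivalently, after an orthogonal change of coordinates, $L=M\Z^n$ with $M$ upper triangular having diagonal entries of size $\asymp_n\lambda_i$ and off-diagonal entries bounded by the corresponding diagonal entry. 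In particular $\vol(\R^n/L)\asymp_n\lambda_1\cdots\lambda_n$.

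\medskip

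\emph{Upper bound.} For the upper bound I would use the standard covering argument: cover $\scrB_R^n$ by translates of a fundamental parallelepiped $P$ of $L$, noting that each lattice point in $\scrB_R^n$ lies in one such translate, and every translate meeting $\scrB_R^n$ lies inside $\scrB_{R+C_n}^n$ where $C_n=\operatorname{diam}(P)$; since $\operatorname{diam}(P)\asymp_n\lambda_n$ this gives $\#(L\cap\scrB_R^n)\leq \vol(\scrB_{R+C_n\lambda_n}^n)/\vol(\R^n/L)\asymp_n (R+\lambda_n)^n/(\lambda_1\cdots\lambda_n)$. This alone is not quite the claimed product, so I would instead argue coordinate by coordinate using the triangular form: a point $M\vecm\in\scrB_R^n$ forces (peeling off the last coordinate first) $|m_n|\lambda_n\lesssim_n R$, then $|m_{n-1}|\lambda_{n-1}\lesssim_n R+|m_n|\lambda_n\lesssim_n R$, and inductively each $m_i$ ranges over $\lesssim_n 1+R/\lambda_i$ values; multiplying gives the upper bound $\prod_i(1+R/\lambda_i)$.

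\medskip

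\emph{Lower bound.} For the lower bound, the term $1$ in each factor is accounted for by the single point $\vecnull$ (and the trivial bound $\#(L\cap\scrB_R^n)\geq1$), so it suffices to handle the regime where $R\gtrsim_n\lambda_{i_0}$ for some largest index $i_0$ and show $\#(L\cap\scrB_R^n)\gtrsim_n\prod_{i\leq i_0}(R/\lambda_i)$. Here I would take the sublattice $L'$ spanned by $\vecb_1,\ldots,\vecb_{i_0}$, which lives in an $i_0$-dimensional subspace and has covolume $\asymp_n\lambda_1\cdots\lambda_{i_0}$; by the standard lower bound for lattice points in a ball of radius $R$ at least comparable to the lattice's diameter (again a covering/volume argument, valid since $R\gtrsim_n\lambda_{i_0}=\lambda_n(L')$), $\#(L'\cap\scrB_R^n)\gtrsim_n R^{i_0}/(\lambda_1\cdots\lambda_{i_0})=\prod_{i\leq i_0}(R/\lambda_i)$, and $L'\cap\scrB_R^n\subset L\cap\scrB_R^n$.

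\medskip

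\emph{Main obstacle.} The genuinely delicate point is the lower bound in the ``mixed'' regime where $R$ is comparable to some but not all of the $\lambda_i$; one must be careful that the sublattice $L'$ chosen really does have covolume $\asymp_n\lambda_1\cdots\lambda_{i_0}$ and diameter $\lesssim_n\lambda_{i_0}$, which is exactly where the existence of a \emph{simultaneously} near-minimal basis (rather than just near-minimal independent vectors) is used, and this in turn is the step that fails for $n$ growing but is harmless for fixed $n$. Everything else is routine volume comparison, and the constants throughout depend only on $n$ as required.
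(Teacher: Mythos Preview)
Your argument is correct and is the standard reduced-basis proof of this classical estimate. The paper itself does not give a proof at all: it simply cites \cite[Proposition~6]{hGcS91} (Gillet--Soul\'e). So there is no ``paper's approach'' to compare against beyond the citation, and what you have sketched is essentially the argument one finds in such references.

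Two small points of presentation. First, in the upper bound your inductive step ``$|m_{n-1}|\lambda_{n-1}\lesssim_n R+|m_n|\lambda_n\lesssim_n R$'' is correct but slightly oblique; the cleaner way to phrase it (which avoids worrying about whether $|m_n|\lambda_n\lesssim_n R$ when $m_n=0$) is: for each fixed choice of $m_{i+1},\ldots,m_n$, the $i$th coordinate constraint confines $m_i$ to an interval of length $\asymp_n R/\lambda_i$, hence to $\lesssim_n 1+R/\lambda_i$ integers. Second, in the lower bound your definition of $i_0$ via ``$R\gtrsim_n\lambda_{i_0}$'' is a little circular since the implied constant is not yet fixed; just take $i_0:=\max\{i:\lambda_i\le R\}$ (with $i_0=0$ if $\lambda_1>R$), and then the factors with $i>i_0$ contribute $\asymp 1$ to the product while the sublattice argument handles $i\le i_0$. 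Neither issue affects the validity of the proof.
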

\begin{proof}
See \cite[Proposition 6]{hGcS91}.\footnote{Note that there is an obvious misprint in the
statement of this proposition; ``$\lambda_1\cdots\lambda_k/M(K)$'' should read
``$\lambda_1\cdots\lambda_k M(K)$''; similarly, in the last line of the proof,
``$\sim V(K_0)$'' should be corrected to ``$\sim V(K_0)^{-1}$''.}
\end{proof}

\begin{lem}\label{LATTICECOUNTBOUNDlem2}
For any lattice $L$ in $\R^n$ and any $\vecv\in\R^n$ and $0<R_1\leq R_2$,
\begin{align*}
\#(L\cap(\scrB_{R_2}^n+\vecv))\ll_n \Bigl(\frac{R_2}{R_1}\Bigr)^n\cdot\#(L\cap\scrB_{R_1}^n).
\end{align*}
\end{lem}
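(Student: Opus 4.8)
The statement to prove is Lemma~\ref{LATTICECOUNTBOUNDlem2}: for any lattice $L$ in $\R^n$, any $\vecv\in\R^n$, and any $0<R_1\leq R_2$,
\[
\#(L\cap(\scrB_{R_2}^n+\vecv))\ll_n \Bigl(\frac{R_2}{R_1}\Bigr)^n\cdot\#(L\cap\scrB_{R_1}^n).
\]
The plan is to reduce the count over a translated ball to a count over a ball centered at the origin, and then to apply Lemma~\ref{LATTICECOUNTBOUNDlem1} to relate both quantities to the successive minima $\lambda_i=\lambda_i(L)$.

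First I would dispose of the translation. A standard covering argument shows that for any $\vecv$, the shifted ball $\scrB_{R_2}^n+\vecv$ meets at most $O_n(1)$ translates of $L$ in a set that can be handled directly; more precisely, the key inequality is $\#(L\cap(\scrB_{R}^n+\vecv))\ll_n \#(L\cap\scrB_{2R}^n)$, valid for all $\vecv\in\R^n$ and all $R>0$. To see this, note that if $L\cap(\scrB_R^n+\vecv)$ is empty there is nothing to prove, and otherwise we may pick a lattice point $\vecw_0\in L\cap(\scrB_R^n+\vecv)$; then $L\cap(\scrB_R^n+\vecv)\subset \vecw_0 + (L\cap\scrB_{2R}^n)$ by the triangle inequality, since any two points of the shifted ball differ by at most $2R$, and their difference lies in $L$. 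Thus the cardinality on the left is at most $\#(L\cap\scrB_{2R}^n)$. Applying this with $R=R_2$, it suffices to prove $\#(L\cap\scrB_{2R_2}^n)\ll_n (R_2/R_1)^n\,\#(L\cap\scrB_{R_1}^n)$.

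Next I would invoke Lemma~\ref{LATTICECOUNTBOUNDlem1} twice. It gives
\[
\#(L\cap\scrB_{2R_2}^n)\asymp_n\prod_{i=1}^n\Bigl(1+\frac{2R_2}{\lambda_i}\Bigr),
\qquad
\#(L\cap\scrB_{R_1}^n)\asymp_n\prod_{i=1}^n\Bigl(1+\frac{R_1}{\lambda_i}\Bigr).
\]
So the lemma reduces to the elementary numerical inequality
\[
\prod_{i=1}^n\Bigl(1+\frac{2R_2}{\lambda_i}\Bigr)\ll_n \Bigl(\frac{R_2}{R_1}\Bigr)^n\prod_{i=1}^n\Bigl(1+\frac{R_1}{\lambda_i}\Bigr),
\]
which follows factor-by-factor: for each $i$, using $R_1\le R_2$ one checks $1+\frac{2R_2}{\lambda_i}\le \frac{2R_2}{R_1}\bigl(1+\frac{R_1}{\lambda_i}\bigr)$, since this is equivalent to $\frac{R_1}{2R_2}+\frac{R_1}{\lambda_i}\le 1+\frac{R_1}{\lambda_i}$, i.e.\ $\frac{R_1}{2R_2}\le 1$, which holds. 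Multiplying these $n$ inequalities gives the bound with an explicit constant $2^n$ absorbed into $\ll_n$.

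I do not anticipate a genuine obstacle here; the only point requiring a little care is the translation step, where one must make sure the reduction $\#(L\cap(\scrB_{R_2}^n+\vecv))\ll_n \#(L\cap\scrB_{2R_2}^n)$ is stated for an \emph{arbitrary} center $\vecv$ (handled cleanly by the ``anchor a lattice point and subtract'' trick above, with no loss beyond a factor $2$ in the radius). After that, everything is an application of Lemma~\ref{LATTICECOUNTBOUNDlem1} together with the trivial per-coordinate estimate on $1+R/\lambda_i$. It is worth remarking that the same argument in fact yields the slightly stronger two-sided-free conclusion without any hypothesis relating $R_1$ and $R_2$ beyond $R_1\le R_2$, and that the implied constant depends only on $n$, as claimed.
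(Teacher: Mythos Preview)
Your argument is correct. The paper's own ``proof'' of this lemma is simply a reference to \cite[Propositions 4 and 5]{hGcS91}, so you have in fact supplied more detail than the paper does: your translation step (anchor a lattice point and subtract) is essentially the content of Proposition~4 there, and your scaling step via Lemma~\ref{LATTICECOUNTBOUNDlem1} (which is \cite[Proposition~6]{hGcS91}) together with the factor-by-factor inequality reproduces what Proposition~5 gives. So the route is the same in spirit; you have just unpacked the citation.
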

\begin{proof}
See \cite[Propositions 4 and 5]{hGcS91}.
\end{proof}

Let us write $\fV_m=\frac{\pi^{\frac12m}}{\Gamma(\frac12m+1)}$, the volume of the unit ball in $\R^m$.
\begin{lem}\label{R1R2limitLEM1}
For any fixed $\beta\in M_{m_1,m_2}(\Z)$, $L\in X_n$ and $x\in P_{L,m_1}$,
we have
\begin{align}\label{R1R2limitLEM1res}
\lim_{R\to\infty} R^{-(n-m_1)m_2}\#\bigl((L^*)^{m_2}\cap S(\beta)_x\cap(\scrB_{R}^n)^{m_2}\bigr)
=\frac{\fV_{n-m_1}^{m_2}}{\dd(x)^{m_2}}.
\end{align}
\end{lem}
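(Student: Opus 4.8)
The plan is to reduce the asymptotic count to a standard lattice-point-counting estimate for the lattice $(g\transp)^{-1}$ restricted to the affine subspace $S(\beta)_x$. First I would fix $x\in P_{L,m_1}$ and write $V=x\R^{m_1}$; then $S(\beta)_x'$ is an affine translate of $(V^\perp)^{m_2}$ inside $(\R^n)^{m_2}$, and by definition $\eta_{\beta,x}$ is Lebesgue measure on this $m_2(n-m_1)$-dimensional affine space. The set whose cardinality we must estimate is $(L^*)^{m_2}\cap S(\beta)_x\cap(\scrB_R^n)^{m_2}$. Since $x\in P_{L,m_1}$ is a primitive tuple, it extends to a $\Z$-basis of $L$; dually, the condition $x\transp y=\beta$ for $y\in (L^*)^{m_2}$ pins down, in each of the $m_2$ columns of $y$, the components of that column along a fixed complement of $V^\perp$, and lets the remaining components range over a translate of $L^*\cap V^\perp$ (more precisely, the relevant lattice is the orthogonal projection of $L^*$ onto $V^\perp$, which by duality equals $(L\cap V)^*$ computed inside $V$, up to the standard identifications). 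The key point is that for primitive $x$ this affine lattice is nonempty and is a coset of a fixed full-rank lattice $\Lambda_x$ in $V^\perp$ whose covolume I would compute to be $\dd(x)$: indeed $\operatorname{covol}(L\cap V)=\dd(x)/(\text{index})$ and primitivity makes the index $1$, while the covolume of the dual lattice inside $V^\perp$ then works out so that each of the $m_2$ factors contributes $\dd(x)^{-1}$.

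Given that structure, the count becomes $\#\bigl((\text{coset of }\Lambda_x)^{m_2}\cap (\scrB_R^n)^{m_2}\bigr)=\prod_{j=1}^{m_2}\#\bigl((\Lambda_x+c_j)\cap \scrB_R^n\bigr)$ for suitable shift vectors $c_j\in V^\perp$, where the intersection with $\scrB_R^n$ inside the ambient $\R^n$ restricts to the ball of radius $R$ in $V^\perp$ (since $\Lambda_x+c_j\subset V^\perp$ only when $c_j\in V^\perp$; if $c_j\notin V^\perp$ one instead intersects $V^\perp+c_j$ with $\scrB_R^n$, getting a ball of radius $\sqrt{R^2-|c_j^{\perp\perp}|^2}$, which for fixed $c_j$ and $R\to\infty$ still has volume $\sim \fV_{n-m_1}R^{n-m_1}$). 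Then the classical fact that for any fixed full-rank lattice $\Lambda$ of covolume $c$ in a Euclidean space of dimension $d$ and any fixed shift, $\#\bigl((\Lambda+v)\cap \scrB_R\bigr)\sim \fV_d R^d/c$ as $R\to\infty$, applied with $d=n-m_1$, $c=\dd(x)$, gives that each factor is asymptotic to $\fV_{n-m_1}R^{n-m_1}/\dd(x)$. Multiplying the $m_2$ factors and dividing by $R^{(n-m_1)m_2}$ yields exactly $\fV_{n-m_1}^{m_2}/\dd(x)^{m_2}$, as claimed. I would phrase the elementary volume-count lemma either by citing it or by a one-line argument: tile the ball by fundamental domains of $\Lambda$ and control the boundary layer using that the boundary of $\scrB_R$ has $(d-1)$-dimensional measure $O(R^{d-1})=o(R^d)$.

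The main obstacle, and the step I would spend the most care on, is the bookkeeping that identifies the relevant lattice in $V^\perp$ and verifies its covolume equals $\dd(x)$ precisely because $x$ is primitive. Concretely: writing $\pi_{V^\perp}$ for orthogonal projection onto $V^\perp$, one must check (a) that $\pi_{V^\perp}(L^*)$ is a lattice of full rank $n-m_1$ in $V^\perp$, (b) that for $y=(y_1,\ldots,y_{m_2})\in(L^*)^{m_2}$ the constraint $x\transp y=\beta$ is equivalent to $\pi_V(y_j)$ being a fixed vector determined by $\beta$ and the $j$-th column (using primitivity of $x$ to see the fixed vector lies in the lattice $\pi_V(L^*)\cap V$, so the constraint is consistent and the solution set is a full coset rather than a proper sublattice or empty), and (c) that $\operatorname{covol}_{V^\perp}\bigl(\pi_{V^\perp}(L^*)\cap(\text{the relevant sublattice})\bigr)=\dd(x)$. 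For (c) the cleanest route is the duality identity $\pi_{V^\perp}(L^*)=(L\cap V^\perp\text{'s complement})$... more carefully, $\pi_{W}(L^*)=(L\cap W^\perp)^*$ in $W$ for any rational subspace $W$, combined with $\operatorname{covol}(L\cap V)=\dd(x)$ for primitive $x$ and $\operatorname{covol}(\Lambda^*)=\operatorname{covol}(\Lambda)^{-1}$; one then tracks how the $m_2$ columns decouple. Everything else — the reduction of $\scrB_R^n\cap(V^\perp+c)$ to an $(n-m_1)$-ball, and the limit of the normalized lattice count — is routine, and I would invoke Lemmas \ref{LATTICECOUNTBOUNDlem1} and \ref{LATTICECOUNTBOUNDlem2} only if needed to justify uniformity; for the bare pointwise limit in \eqref{R1R2limitLEM1res} they are not strictly necessary.
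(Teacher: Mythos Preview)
Your approach matches the paper's in its main outline: extend the primitive tuple $x$ to a basis of $L$, recognize $(L^*)^{m_2}\cap S(\beta)_x'$ as a single coset of $(L^*\cap V^\perp)^{m_2}$, factor the count as a product over the $m_2$ columns, and use that $L^*\cap V^\perp$ has covolume $\dd(x)$ in $V^\perp$. (A small wording issue: the relevant lattice is the \emph{intersection} $L^*\cap V^\perp$, not the projection $\pi_{V^\perp}(L^*)$; once a single solution $y_0\in(L^*)^{m_2}$ with $x\transp y_0=\beta$ is fixed, the remaining freedom is exactly $y-y_0\in(L^*\cap V^\perp)^{m_2}$. Your duality identities do lead to the right covolume $\dd(x)$, but the identification of the lattice itself should be corrected.)

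There is, however, a genuine gap. The lemma asks for the count over $S(\beta)_x=S(\beta)_x'\cap U_{m_2}$, i.e.\ over those $y$ whose $m_2$ columns are \emph{linearly independent}, whereas your argument tacitly counts over the full affine coset $S(\beta)_x'$. The paper devotes the last third of its proof to closing this gap: it shows that the number of $y\in(L^*)^{m_2}\cap S(\beta)_x'\cap(\scrB_R^n)^{m_2}$ with $y\notin U_{m_2}$ is $O(R^{(n-m_1)m_2-2})$, hence negligible against the main term. The argument fixes an index $\ell$, freezes the other $m_2-1$ columns (at cost $O(R^{(n-m_1)(m_2-1)})$), and then bounds the number of choices for the $\ell$th column lying in the span $U$ of the others by $\#(L^*\cap U\cap\scrB_R^n)\ll R^{\dim U}\leq R^{m_2-1}\leq R^{n-m_1-2}$, invoking Lemma~\ref{LATTICECOUNTBOUNDlem1} and $m_1+m_2<n$. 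Without this step your asymptotic is for the wrong set; you should add this rank-defect estimate (it is short, but not automatic).
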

\begin{proof}
Set $V=x\R^{m_1}\subset\R^n$.
Since the columns of $x$ form a primitive $m_1$-tuple in $L$,
$x$ can be extended to a matrix $g\in G$ with block decomposition $g=(x\hspace{5pt}*)$
such that $L=g\Z^n$.
Set $\beta':=\scmatr{\beta}0\in M_{n,m_2}(\Z)$
and $y:=g^{-\mathsf{T}}\beta'$;
then $y\in (L^*)^{m_2}\cap S(\beta)_x'$.
It follows that $S(\beta)_x'=y+(V^\perp)^{m_2}$,
and so 
\begin{align}\notag
\#\bigl((L^*)^{m_2}\cap S(\beta)_x'\cap(\scrB_{R}^n)^{m_2}\bigr)
&=\#\bigl((y+(L^*\cap V^\perp)^{m_2})\cap(\scrB_{R}^n)^{m_2}\bigr)
\\\label{R1R2limitLEM1pf1}
&=\prod_{j=1}^{m_2} \#\bigl((L^*\cap V^\perp)\cap(\scrB_R^n-\vecy_j)\bigr),
\end{align}
where $\vecy_1,\ldots,\vecy_{m_2}\in L^*$ are the column vectors of $y$.

Next note that $V=gW$ where $W$ is the linear span of the first $m_1$ standard unit vectors in $\R^n$;
hence $L^*\cap V^\perp=g^{-\mathsf{T}}(\Z^n\cap W^\perp)$,
and since the lattice $\Z^n\cap W^\perp$ has covolume one in $W^\perp$,
it follows that the lattice $L^*\cap V^\perp$ 
has covolume $\delta(g^{-\mathsf{T}},W^\perp)=\delta(g,W)$ in $V^\perp$
(the last equality holds by Lemma \ref{JACOBIANauxlem}).
Since $g=(x\hspace{5pt}*)$, this covolume equals $\dd(x)$.
Note also that for each $j$ and for $R$ large,
the ball 
$\scrB_R^n-\vecy_j$ intersects $V^\perp$ in a 
$(n-m_1)$-dimensional ball,
whose radius is $R\,(1-o(1))$ as $R\to\infty$.
Hence we conclude that $\#\bigl((L^*\cap V^\perp)\cap(\scrB_R^n-\vecy_j)\bigr)\sim\fV_{n-m_1}R^{n-m_1}\dd(x)^{-1}$
as $R\to\infty$,
and so
\begin{align}\label{R1R2limitLEM1pf2}
\lim_{R\to\infty} R^{-(n-m_1)m_2}\#\bigl((L^*)^{m_2}\cap S(\beta)'_x\cap(\scrB_{R}^n)^{m_2}\bigr)
=\frac{\fV_{n-m_1}^{m_2}}{\dd(x)^{m_2}}.
\end{align}

Hence the lemma will be proved if we can only prove that
the limits in \eqref{R1R2limitLEM1res} and \eqref{R1R2limitLEM1pf2} are equal.
Note that
$(L^*)^{m_2}\cap S(\beta)'_x\setminus S(\beta)_x=(y+(L^*\cap V^\perp)^{m_2})\setminus U_{m_2}$;
hence for any given $R$, the difference between the cardinalities
appearing in \eqref{R1R2limitLEM1res} and in \eqref{R1R2limitLEM1pf2}
is bounded above by $\sum_{\ell=1}^{m_2}E_\ell(R)$
where $E_\ell(R)$ is the number of matrices
$z\in (y+(L^*\cap V^\perp)^{m_2})\cap(\scrB_{R}^n)^{m_2}$
such that the $\ell$th column of $z$ lies in the ($\R$-)linear span of the other columns.
We will in fact prove that $E_\ell(R)=O(R^{(n-m_1)m_2-2})$ for each $\ell$;
this clearly implies the desired equality of the limits.

Given $\ell$, 
the columns of index $\neq\ell$ of a matrix
$z\in (y+(L^*\cap V^\perp)^{m_2})\cap(\scrB_R^n)^{m_2}$
can be fixed in $O(R^{(n-m_1)(m_2-1)})$ ways,
since the $i$th column must lie in
$(\vecy_i+(L^*\cap V^\perp))\cap\scrB_{R}^n$.
Hence it now suffices to prove that for every linear subspace 
$U$ of $\R^n$ 
which is spanned by vectors in $L^*$
and has dimension $\leq m_2-1$,
and for every $R\geq1$,
there exist at most $O(R^{n-m_1-2})$ vectors
in $U\cap(\vecy_\ell+(L^*\cap V^\perp))\cap\scrB_{R}^n$,
where the implied constant is independent of $U$ and $R$.
In fact we will prove the stronger fact that 
$\#(L^*\cap U \cap\scrB_R^n)\ll R^{n-m_1-2}$.
Note that $L^*\cap U $ is a full rank lattice in the Euclidean space $U$,
and its successive minima are bounded from below by the successive minima of $L^*$,
i.e.\ $\lambda_i(L^*\cap U )\geq\lambda_i(L^*)$ for each $i\leq\dim U$.
Hence by Lemma~\ref{LATTICECOUNTBOUNDlem1},
applied with $U$ in place of $\R^n$,
and using $R\geq1$,
we have 
$\#(L^*\cap U \cap\scrB_R^n)\ll_{L}R^{\dim U}$.
Also $R^{\dim U}\leq R^{m_2-1}\leq R^{n-m_1-2}$, since $m_1+m_2\leq n-1$.
The proof is complete.
\end{proof}

We now give the proof of Proposition \ref{cbetaexplPROP}.
Let $\beta\in M_{m_1,m_2}(\Z)$ be given.
As we have noted,
it follows from Lemmas \ref{GinvfunctionalLEM2},
\ref{GtransitiveonSbetaLEM} and \ref{etabetaGinvLEM} 
that there exists a constant $c_\beta\geq0$ such that
\begin{align}\label{cbetadef2}
F_\beta(\rho)=c_\beta \cdot\eta_\beta(\rho)
\end{align}
for all $\rho\in C_c(S(\beta))$.
Also, as we have noted,
the above equality can be viewed as an equality between two regular Borel measures on
$S(\beta)$;
hence the equality in \eqref{cbetadef2} holds more generally
for any bounded Borel measurable function $\rho$ on $S(\beta)$ of compact support.
In particular, 
given any bounded Borel subset $E\subset U_{m_1}$ and $R>0$,
the formula \eqref{cbetadef2} holds for the following function:
\begin{align}\label{rhoR1R2def}
\rho_{R}(x,y):=I\bigl(x\in E\bigr)
\cdot \dd(x)^{m_2}\cdot I\bigl(y\in(\scrB_{R}^n)^{m_2}\bigr).
\end{align}
We also have
\begin{align*}
\eta_\beta(\rho_{R})
&=\int_{U_{m_1}}\int_{S(\beta)_x}\rho_{R}(x,y)\,d\eta_{\beta,x}(y)\,\frac{dx}{\dd(x)^{m_2}}
\\
&=R^{m_2(n-m_1)}\int_{U_{m_1}}\int_{S(R^{-1}\beta)_x}\rho_{1}(x,y)\,d\eta_{R^{-1}\beta,x}(y)\,\frac{dx}{\dd(x)^{m_2}}
=R^{m_2(n-m_1)}\,\eta_{R^{-1}\beta}(\rho_{1}).
\end{align*}
(We substituted $y=Ry_{\new}$.)
Hence,
recalling also \eqref{MAINTHMpf6}, we have:
\begin{align}\label{R1R2limitLEM2varcons}
c_\beta\cdot\eta_{R^{-1}\beta}(\rho_{1})
=R^{-m_2(n-m_1)}
\int_{X_n}\sum_{x\in P_{L,m_1}}
\sum_{y\in (L^*)^{m_2}\cap S(\beta)_x}
\rho_{R}(x,y)\,d\mu(L)
\end{align}
For $E$ fixed, we will let $R\to\infty$ in the above equality.
The following lemma will be used to see that we may
then change the order of limit and integration.
For any $x\in M_{n,m_1}(\R)$, let us write $x^\perp$
for the orthogonal complement in $\R^n$ of 
$x\R^{m_1}$ (viz., the column span of $x$).
\begin{lem}\label{MAJORANTLEM1}
Define the function $\fM:X_n\to\R_{\geq0}$
by
\begin{align*}
\fM(L)=\sum_{x\in P_{L,m_1}}\sum_{y\in (L^*\cap x^\perp)^{m_2}}\rho_{1}(x,y).
\end{align*}
Then $\int_{X_n}\fM\,d\mu<\infty$,
and for all $L\in X_n$ and $R\geq1$ we have
\begin{align}\label{MAJORANTLEM1res1}
R^{-m_2(n-m_1)}
\sum_{x\in P_{L,m_1}}\sum_{y\in (L^*)^{m_2}\cap S(\beta)_x}
\rho_{R}(x,y)
\ll_{n} \fM(L).
\end{align}
\end{lem}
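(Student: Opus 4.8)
The statement has two parts --- the pointwise bound \eqref{MAJORANTLEM1res1} and the integrability $\int_{X_n}\fM\,d\mu<\infty$ --- and I would prove them separately, treating the pointwise bound first since it is the substantive one. Fix $L\in X_n$ and $R\geq1$. Only the $x\in P_{L,m_1}$ lying in $E$ contribute to either side, so I would fix such an $x$ and set $V=x\R^{m_1}$, so that $x^\perp=V^\perp$ is an $(n-m_1)$-dimensional Euclidean space. The key structural observation is that $S(\beta)_x\subset S(\beta)_x'$ and $S(\beta)_x'$ is a translate of $(x^\perp)^{m_2}\subset M_{n,m_2}(\R)$; hence $(L^*)^{m_2}\cap S(\beta)_x$ is contained in $(L^*)^{m_2}\cap S(\beta)_x'$, which --- when nonempty --- is a single coset $y_0+(L^*\cap x^\perp)^{m_2}$ of the lattice $(L^*\cap x^\perp)^{m_2}$ in $(x^\perp)^{m_2}$. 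Here $y_0$ may be taken to be any element of the intersection, and one exists by extending $x$ to a $\Z$-basis of $L$ and taking $y_0=g^{-\mathsf{T}}\scmatr{\beta}{0}$, exactly as in the proof of Lemma \ref{R1R2limitLEM1}.

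Next I would bound the number of $y$ in this coset all of whose columns have norm $\leq R$. Writing $\vecy_{0,1},\dots,\vecy_{0,m_2}$ for the columns of $y_0$ and splitting each orthogonally as $\vecy_{0,j}=\vecp_j+\vecq_j$ with $\vecp_j\in V$ and $\vecq_j\in x^\perp$, one has $\|\vecy_{0,j}+\vecl\|^2=\|\vecp_j\|^2+\|\vecq_j+\vecl\|^2$ for $\vecl\in L^*\cap x^\perp$, so $\|\vecy_{0,j}+\vecl\|\leq R$ forces $\|\vecq_j+\vecl\|\leq R$; consequently
\begin{align*}
\#\bigl\{y\in(L^*)^{m_2}\cap S(\beta)_x\col y\in(\scrB_R^n)^{m_2}\bigr\}
\leq\prod_{j=1}^{m_2}\#\bigl((L^*\cap x^\perp)\cap(\scrB_R^n-\vecq_j)\bigr).
\end{align*}
Since $L^*\cap x^\perp$ is a full-rank lattice in the $(n-m_1)$-dimensional Euclidean space $x^\perp$, I would apply Lemma \ref{LATTICECOUNTBOUNDlem2} inside $x^\perp$ (with $R_1=1$, $R_2=R\geq1$) to bound each factor by $\ll_n R^{\,n-m_1}\#(L^*\cap x^\perp\cap\scrB_1^n)$. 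Multiplying by $\dd(x)^{m_2}$ and summing over $x\in P_{L,m_1}\cap E$, the resulting $R^{m_2(n-m_1)}$ cancels the normalization $R^{-m_2(n-m_1)}$, leaving the left side of \eqref{MAJORANTLEM1res1} bounded by $\ll_n\sum_{x\in P_{L,m_1}\cap E}\dd(x)^{m_2}\bigl(\#(L^*\cap x^\perp\cap\scrB_1^n)\bigr)^{m_2}$ --- which is exactly $\fM(L)$, upon carrying out the inner sum over $y\in(L^*\cap x^\perp)^{m_2}$ in the definition of $\fM$. I expect the main obstacle to lie precisely here: getting the coset description of $(L^*)^{m_2}\cap S(\beta)_x$ exactly right, and bookkeeping the power of $R$ so that it cancels cleanly against the normalization.

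For the integrability a crude majorant suffices. Since $E$ is bounded, $E\subset(\scrB_C^n)^{m_1}$ for some $C\geq1$ and then $\dd(x)\leq C^{m_1}$ on $E$; dropping the requirement that the dual vectors lie in $x^\perp$, I would obtain $\fM(L)\ll_{C,n}\#(P_{L,m_1}\cap E)\cdot(\#(L^*\cap\scrB_1^n))^{m_2}\leq(\#(L\cap\scrB_C^n))^{m_1}(\#(L^*\cap\scrB_1^n))^{m_2}$. Using $I(\|\vecv\|\leq r)\leq e^{\pi}e^{-\pi\|\vecv\|^2/r^2}$ for each ball count, and then the Poisson summation identity $\sum_{\vecw\in L^*}e^{-\pi\|\vecw\|^2}=\sum_{\vecv\in L}e^{-\pi\|\vecv\|^2}$ (the same trick already used in the proof of Lemma \ref{GinvfunctionalLEM}, valid since $L$ has covolume one and $e^{-\pi\|\cdot\|^2}$ is self-dual), this is $\ll_{C,n}\sum_{v\in L^{m_1+m_2}}\Phi_0(v)$, where $\Phi_0(\vecv_1,\dots,\vecv_{m_1+m_2}):=\prod_{i=1}^{m_1}e^{-\pi\|\vecv_i\|^2/C^2}\prod_{i=m_1+1}^{m_1+m_2}e^{-\pi\|\vecv_i\|^2}$ is a fixed Schwartz function on $(\R^n)^{m_1+m_2}$. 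Finally, since $m_1+m_2<n$, Rogers' formula (Theorem \ref{ROGERSFORMULATHMp} together with the convergence statement of Remark \ref{ROGERSFORMULATHMabsconvrem}) gives $\int_{X_n}\sum_{v\in L^{m_1+m_2}}\Phi_0(v)\,d\mu(L)<\infty$, whence $\int_{X_n}\fM\,d\mu<\infty$. This part is robust: any bound of $\fM$ by a Siegel--Rogers transform of a Schwartz function in fewer than $n$ variables would do.
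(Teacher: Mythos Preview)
Your argument is correct and follows essentially the same route as the paper. For the pointwise bound both you and the paper reduce to a per-column count in the $(n-m_1)$-dimensional lattice $L^*\cap x^\perp$ and invoke Lemma~\ref{LATTICECOUNTBOUNDlem2}; the only cosmetic difference is that you project $\vecy_{0,j}$ orthogonally onto $x^\perp$ before counting, whereas the paper simply notes that $\scrB_R^n-\vecy_j$ meets $x^\perp$ in a ball of radius $\leq R$. For integrability you spell out the Gaussian majorant plus Poisson summation plus Rogers' formula argument, while the paper just cites the already-proved bound \eqref{GinvfunctionalLEMpf1} (which was established by exactly that method in Lemma~\ref{GinvfunctionalLEM}); the content is the same.
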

Note that the function $\fM$ depends on our choice of the fixed %
set $E$.
Note also that the implied constant in \eqref{MAJORANTLEM1res1} depends \textit{only} on $n$;
in particular it is independent of $L$ and $R$.
\begin{proof}
Since $E$ is bounded, there is a constant $B>0$ such that $\dd(x)^{m_2}\leq B$ for all $x\in E$,
and hence $\rho_{1}(x,y)\leq B$ on all $S(\beta)$.
Using this, together 
with the bound in \eqref{GinvfunctionalLEMpf1},
applied with $k_1=m_1$, $k_2=m_2$ and $K$ as 
$\overline{E}\times\bigl(\overline{\scrB_{1}^n}\bigr)^{m_2}$,
it follows that $\int_{X_n}\fM\:d\mu$ is finite.

In order to prove \eqref{MAJORANTLEM1res1} it suffices to prove
\begin{align}\label{MAJORANTLEM1pf1}
R^{-m_2(n-m_1)}\#\bigl((L^*)^{m_2}\cap S(\beta)_x\cap(\scrB_{R}^n)^{m_2}\bigr)
\ll_{n} \#\bigl(L^*\cap x^\perp\cap\scrB_1^n\bigr)^{m_2}
\end{align}
for all $L\in X_n$, $x\in P_{L,m_1}$ and $R\geq1$.
Indeed, \eqref{MAJORANTLEM1res1} follows by multiplying the inequality in
\eqref{MAJORANTLEM1pf1} by $\dd(x)^{m_2}$ and then adding over all 
$x\in P_{L,m_1}\cap E$.

In order to prove \eqref{MAJORANTLEM1pf1},
recall that
$S(\beta)_x\subset S(\beta)_x'$,
and as in the proof of Lemma \ref{R1R2limitLEM1}
there exist vectors $\vecy_1,\ldots,\vecy_{m_2}\in\R^n$
(depending on $x,\beta$)
such that \eqref{R1R2limitLEM1pf1} holds.
Hence it suffices to prove that
\begin{align}\label{MAJORANTLEM1pf2}
R^{-(n-m_1)}\#\bigl(L^*\cap x^\perp\cap(\scrB_R^n-\vecy)\bigr)\ll_n L^*\cap x^\perp\cap\scrB_1^n
\end{align}
for all $L\in X_n$, $x\in P_{L,m_1}$, $R\geq1$ and $\vecy\in\R^n$.

But $L^*\cap x^\perp$ is a (full rank) lattice in the Euclidean space $x^\perp$,
which has dimension $n-m_1$,
and for every $\vecy\in\R^n$, 
$\scrB_R^n-\vecy$ intersects $x^\perp$ in a ball of radius $\leq R$
(or the empty set).
Hence \eqref{MAJORANTLEM1pf2} follows from Lemma \ref{LATTICECOUNTBOUNDlem2},
applied with $x^\perp$ in the place of $\R^n$.
\end{proof}

Now we let
$R\to\infty$ in \eqref{R1R2limitLEM2varcons}.
In this limit we have
\begin{align*}
\eta_{R^{-1}\beta}(\rho_{1})
=\int_{E}\eta_{R^{-1}\beta,x}\bigl(S(R^{-1}\beta)_x\cap(\scrB_1^n)^{m_2}\bigr)\,dx
\to\fV_{n-m_1}^{m_2}\vol(E),
\end{align*}
by the Lebesgue dominated convergence theorem,
since for each fixed $x\in U_{m_1}$,
the measure
$\eta_{R^{-1}\beta,x}\bigl(S(R^{-1}\beta)_x\cap(\scrB_1^n)^{m_2}\bigr)$
tends to $\fV_{n-m_1}^{m_2}$ from below as $R\to\infty$.
On the other hand,
for each fixed $L\in X_n$
we have, by Lemma \ref{R1R2limitLEM1}
(using \eqref{rhoR1R2def} and the fact that 
$P_{L,m_1}\cap E$ is finite, since $E$ is bounded):
\begin{align*}
\lim_{R\to\infty}R^{-m_2(n-m_1)}\sum_{x\in P_{L,m_1}}
\sum_{\substack{y\in (L^*)^{m_2}\cap U_{m_2} %
\\ (x\transp y=\beta)}}
\rho_{R}(x,y)
=\fV_{n-m_1}^{m_2}\#\bigl(P_{L,m_1}\cap E\bigr).
\end{align*}
Hence, by changing order of the limit and the integration over $X_n$ in \eqref{R1R2limitLEM2varcons},
which is justified by the Lebesgue dominated convergence theorem
and Lemma \ref{MAJORANTLEM1},
we obtain:
\begin{align}\label{MAINTHMpf20}
c_\beta\cdot
\fV_{n-m_1}^{m_2}\vol(E)
=\fV_{n-m_1}^{m_2}
\int_{X_n}\#\bigl(P_{L,m_1}\cap E\bigr)\,d\mu(L)
=\frac{\fV_{n-m_1}^{m_2}\vol(E)}{\zeta(n)\cdots\zeta(n-m_1+1)},
\end{align}
where the last equality holds by
Theorem \ref{ROGERSbasicformulaTHM}.
Hence we obtain $c_\beta=\frac1{\zeta(n)\cdots\zeta(n-m_1+1)}$,
and thus, via \eqref{cbetadef2},
Proposition \ref{cbetaexplPROP} is proved.
Hence also Theorem \ref{MAINTHEOREMabs2} is proved.
\hfill$\square$ $\square$ $\square$

\subsection{Symmetries under transposition of $\beta$}
\label{betatranspsymmSEC}

We here prove the symmetry relations for $W(\beta)$ and $\eta_{\beta}$
mentioned in Remark \ref{DUALsymmrem};
see Lemma \ref{etabetasymmLEM} and
Lemma \ref{WsymmLEM}.

We first prove an auxiliary lemma:
\begin{lem}\label{etabetaasymptLEM}
For any fixed $m_1,m_2\in\Z^+$ and $\beta\in M_{m_1,m_2}(\Z)$,
\begin{align}\label{etabetaasymptLEMres}
\eta_\beta\Bigl(S(\beta)\cap\bigl((\scrB_R^n)^{m_1}\times(\scrB_R^n)^{m_2}\bigr)\Bigr)\sim C(n;m_1,m_2)\cdot R^{n(m_1+m_2)-2m_1m_2}\qquad
\text{as }\: R\to\infty,
\end{align}
where
\begin{align}\label{etabetaasymptLEMres2}
C(n;m_1,m_2)=\frac{\prod_{j=n-m_1+1}^n (j\fV_j)}{\prod_{j=n-m_1-m_2+1}^{n-m_2} (j\fV_j)}\, \fV_{n-m_1}^{\:m_2}\fV_{n-m_2}^{\:m_1}.
\end{align}
\end{lem}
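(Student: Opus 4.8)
The statement asks for the precise volume-growth asymptotics of $\eta_\beta$ restricted to a product of two balls. The natural approach is to unwind the definition \eqref{NUbetaDEFnew} of $\eta_\beta$ and exploit scaling in $R$, reducing to two independent "slice" computations: one over the $x$-variable in $M_{n,m_1}(\R)$ and one over the $y$-variable in the affine fibre $S(\beta)_x'$. First I would observe that, since $\beta$ is fixed, the fibre $S(\beta)_x' = \{y : x\transp y = \beta\}$ is a translate of $(x^\perp)^{m_2}$, so for large $R$ the Euclidean measure $\eta_{\beta,x}$ of $S(\beta)_x' \cap (\scrB_R^n)^{m_2}$ behaves like the measure of $(x^\perp \cap \scrB_R^n)^{m_2}$, i.e.\ like $(\fV_{n-m_1} R^{n-m_1})^{m_2}$, up to a factor $(1-o(1))$ coming from the offset by $\vecy_1,\dots,\vecy_{m_2}$ (exactly as in the proof of Lemma \ref{R1R2limitLEM1}). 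Thus
\begin{align*}
\eta_\beta\Bigl(S(\beta)\cap\bigl((\scrB_R^n)^{m_1}\times(\scrB_R^n)^{m_2}\bigr)\Bigr)
= \int_{\scrB_R^n\cap\,\cdots} \eta_{\beta,x}\bigl(S(\beta)_x'\cap(\scrB_R^n)^{m_2}\bigr)\,\frac{dx}{\dd(x)^{m_2}},
\end{align*}
where the outer integral is over those $x\in U_{m_1}$ all of whose columns lie in $\scrB_R^n$.

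The key remaining input is then the asymptotics of $\int_{(\scrB_R^n)^{m_1}\cap U_{m_1}} \dd(x)^{-m_2}\,dx$ as $R\to\infty$, which by the scaling $x = Rx'$ equals $R^{nm_1 - m_1 m_2}\int_{(\scrB_1^n)^{m_1}} \dd(x')^{-m_2}\,dx'$. So the whole expression is asymptotic to $\fV_{n-m_1}^{m_2} R^{(n-m_1)m_2} \cdot R^{nm_1-m_1m_2}\int_{(\scrB_1^n)^{m_1}} \dd(x)^{-m_2}\,dx$, which already gives the claimed power $R^{n(m_1+m_2)-2m_1m_2}$ and reduces the whole problem to evaluating the integral
\begin{align*}
\int_{(\scrB_1^n)^{m_1}} \dd(x)^{-m_2}\,dx.
\end{align*}
(One must check dominated convergence to pass the $o(1)$ from the fibre computation through the outer integral; this is routine since $\eta_{\beta,x}(S(\beta)_x'\cap(\scrB_R^n)^{m_2})\le \fV_{n-m_1}^{m_2}R^{(n-m_1)m_2}$ uniformly, and the singularity of $\dd(x)^{-m_2}$ on $\partial U_{m_1}$ is integrable because $m_2 < n-m_1$.)

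The main obstacle, and the computational heart of the lemma, is therefore evaluating $\int_{(\scrB_1^n)^{m_1}}\dd(x)^{-m_2}\,dx$ and matching the result to the constant \eqref{etabetaasymptLEMres2}. The plan here is to use the classical Siegel/QR-type integral formula: writing $x = k\,r$ in "polar coordinates" where $k$ ranges over an orthonormal $m_1$-frame in $\R^n$ and $r$ over upper-triangular $m_1\times m_1$ matrices (so $\dd(x) = |\det r| = \prod r_{ii}$), the Lebesgue measure $dx$ factors with a Jacobian $\prod_{i=1}^{m_1} r_{ii}^{\,n-i}$ times the measure on the frame bundle. Integrating out the frame contributes the volume of the Stiefel manifold $V_{m_1}(\R^n)$, namely $\prod_{j=n-m_1+1}^{n}(j\fV_j)\big/\prod\text{(nothing)}$—more precisely $\prod_{i=1}^{m_1}(\text{surface area of }S^{n-i})$—and integrating $r$ over the region where all $m_1$ columns lie in the unit ball reduces to a product of one-dimensional Beta-type integrals in the $r_{ii}$ after integrating out the off-diagonal entries. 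Collecting the Gamma-function factors and simplifying, one should land exactly on $C(n;m_1,m_2)$ as displayed; the presence of $\fV_{n-m_1}^{m_2}$ from the fibre step, together with the frame volume and the $r_{ii}$-integrals, is what produces the ratio $\prod_{j=n-m_1+1}^n(j\fV_j)/\prod_{j=n-m_1-m_2+1}^{n-m_2}(j\fV_j)$ and the extra factor $\fV_{n-m_2}^{m_1}$. I would carry out this bookkeeping last, after the structural reduction above is in place, as it is the one genuinely calculation-heavy step.
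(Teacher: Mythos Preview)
Your approach is correct and essentially the same as the paper's: both reduce, via scaling, to the integral $\int_{(\scrB_1^n)^{m_1}}\dd(x)^{-m_2}\,dx$ multiplied by $\fV_{n-m_1}^{m_2}$. The paper's execution is slightly cleaner in two respects: it scales both $y$ and $x$ by $R$ simultaneously (so the integrand becomes $\eta_{R^{-2}\beta,x}\bigl(S(R^{-2}\beta)_x\cap(\scrB_1^n)^{m_2}\bigr)\,\dd(x)^{-m_2}$ over the fixed domain $(\scrB_1^n)^{m_1}$, which increases monotonically to the limit, so Monotone Convergence applies directly), and it evaluates the remaining integral by two applications of \cite[Lemma~5.2]{aSaS2016} rather than by a hands-on QR/Stiefel computation.
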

(Here $\fV_d^{\: m}$ stands for $(\fV_d)^m$, i.e.\ $\fV_d$ raised to $m$.)
\begin{proof}
Using the definition \eqref{NUbetaDEFnew} and substituting $y=R\, y_{\new}$ in the inner integral,
it follows that the left-hand side of \eqref{etabetaasymptLEMres} equals
\begin{align*}
R^{m_2(n-m_1)}
\int_{ %
(\scrB_R^n)^{m_1}}\eta_{R^{-1}\beta,x}\Bigl(S(R^{-1}\beta)_x\cap(\scrB_1^n)^{m_1}\Bigr)\,\frac{dx}{\dd(x)^{m_2}}.
\end{align*}
Substituting here $x=R\, x_{\new}$,
and using the fact that $S(R^{-1}\beta)_{Rx}=S(R^{-2}\beta)_x$
with $\eta_{R^{-1}\beta,Rx}=\eta_{R^{-2}\beta,x}$,
the above expression becomes
\begin{align*}
R^{n(m_1+m_2)-2m_1m_2}
\int_{ %
(\scrB_1^n)^{m_1}}\eta_{R^{-2}\beta,x}\Bigl(S(R^{-2}\beta)_x\cap(\scrB_1^n)^{m_1}\Bigr)\,\frac{dx}{\dd(x)^{m_2}}.
\end{align*}
Here for each fixed $x\in U_{m_1}$, the integrand tends to
$\eta_{0,x}(S(0)_x\cap(\scrB_1^n)^{m_1})\cdot \dd(x)^{-m_2}
=\fV_{n-m_2}^{\: m_1}\dd(x)^{-m_2}$ 
from below as $R\to\infty$.
Hence by the Monotone Convergence Theorem,
we obtain \eqref{etabetaasymptLEMres} with 
$C(n;m_1,m_2)=\fV_{n-m_2}^{\: m_1}\int_{%
(\scrB_1^n)^{m_1}}\,\frac{dx}{\dd(x)^{m_2}}$.
Finally, by applying
\cite[Lemma 5.2]{aSaS2016}
two times,
we obtain \eqref{etabetaasymptLEMres2}.
\end{proof}
\begin{lem}\label{etabetasymmLEM}
Let $m_1,m_2\in\Z^+$ and $\beta\in M_{m_1,m_2}(\Z)$,
and let $J$ be the diffeomorphism
$\langle x,y\rangle\mapsto\langle y,x\rangle$ from $S(\beta)$ onto $S(\beta\transp)$.
Then $\eta_{\beta\transp}=J_*\eta_\beta$.
\end{lem}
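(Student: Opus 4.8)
The plan is to prove that $\eta_{\beta\transp} = J_*\eta_\beta$ by exploiting the uniqueness of $G$-invariant measures, exactly as was done in the proof of Proposition \ref{cbetaexplPROP}. First I would observe that $J:S(\beta)\to S(\beta\transp)$ is $G$-equivariant with respect to a suitable relabeling of the $G$-action. More precisely, the $G$-action \eqref{Gaction} on $\scrM_{n,m_1,m_2}$, when transported by the swap $\langle x,y\rangle\mapsto\langle y,x\rangle$ to $\scrM_{n,m_2,m_1}$, becomes the action $g\langle y,x\rangle = \langle g^{-\mathsf{T}}y, gx\rangle$; but since $g\mapsto g^{-\mathsf{T}}$ is an automorphism of $G=\SL_n(\R)$, composing with this automorphism shows that $J$ intertwines the $G$-action on $S(\beta)$ with the $G$-action on $S(\beta\transp)$. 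Consequently $J_*\eta_\beta$ is a $G$-invariant regular Borel measure on $S(\beta\transp)$. By Lemma \ref{etabetaGinvLEM}, $\eta_{\beta\transp}$ is also $G$-invariant, and by Lemma \ref{GtransitiveonSbetaLEM} together with the standard uniqueness result for invariant measures on homogeneous spaces (as invoked after Lemma \ref{GtransitiveonSbetaLEM}), we conclude that $J_*\eta_\beta = c\cdot\eta_{\beta\transp}$ for some constant $c = c(n;m_1,m_2) > 0$.

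It then remains to pin down the constant $c$, and the natural tool is the asymptotic count from Lemma \ref{etabetaasymptLEM}. Applying $J_*\eta_\beta = c\cdot\eta_{\beta\transp}$ to the set $S(\beta\transp)\cap\bigl((\scrB_R^n)^{m_2}\times(\scrB_R^n)^{m_1}\bigr)$, whose $J$-preimage is $S(\beta)\cap\bigl((\scrB_R^n)^{m_1}\times(\scrB_R^n)^{m_2}\bigr)$, and letting $R\to\infty$, Lemma \ref{etabetaasymptLEM} gives
\begin{align*}
C(n;m_1,m_2) = c\cdot C(n;m_2,m_1).
\end{align*}
So the proof reduces to verifying that the explicit constant $C(n;m_1,m_2)$ from \eqref{etabetaasymptLEMres2} is symmetric in $m_1$ and $m_2$. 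Writing out $C(n;m_1,m_2)$ and $C(n;m_2,m_1)$ and forming the ratio, the factors $\fV_{n-m_1}^{\,m_2}\fV_{n-m_2}^{\,m_1}$ are manifestly symmetric, so one only needs
\begin{align*}
\frac{\prod_{j=n-m_1+1}^n (j\fV_j)}{\prod_{j=n-m_1-m_2+1}^{n-m_2} (j\fV_j)}
= \frac{\prod_{j=n-m_2+1}^n (j\fV_j)}{\prod_{j=n-m_1-m_2+1}^{n-m_1} (j\fV_j)}.
\end{align*}
This is a routine cancellation: cross-multiplying, both sides equal $\prod_{j=n-m_1-m_2+1}^{n}(j\fV_j)$ divided by nothing once one notes that the left side equals $\prod_{j=n-m_1+1}^n(j\fV_j)\big/\prod_{j=n-m_1-m_2+1}^{n-m_2}(j\fV_j)$ and the union of the index sets $\{n-m_1+1,\dots,n\}$ and $\{n-m_1-m_2+1,\dots,n-m_1\}$ equals the union of $\{n-m_2+1,\dots,n\}$ and $\{n-m_1-m_2+1,\dots,n-m_2\}$ (both being $\{n-m_1-m_2+1,\dots,n\}$), with the subtracted sets likewise having equal products. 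Hence $C(n;m_1,m_2) = C(n;m_2,m_1)$, so $c=1$ and $\eta_{\beta\transp} = J_*\eta_\beta$.

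I expect the only genuine subtlety to be the bookkeeping in the first paragraph — namely being careful that $J$ really is $G$-equivariant after twisting by the automorphism $g\mapsto g^{-\mathsf{T}}$, and that the uniqueness-of-invariant-measures argument applies verbatim (it does, since $S(\beta\transp)$ is again a single $G$-orbit by Lemma \ref{GtransitiveonSbetaLEM}, now with $m_1,m_2$ interchanged, and $n > m_1+m_2$ is symmetric). An alternative, more hands-on route avoiding the automorphism twist would be to verify $\eta_{\beta\transp} = J_*\eta_\beta$ directly from the definition \eqref{NUbetaDEFnew} via a Fubini-type symmetry argument, but this seems to require essentially re-deriving a symmetric version of the disintegration formula, so the invariance-plus-uniqueness approach combined with Lemma \ref{etabetaasymptLEM} is cleaner. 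One could also note, as a sanity check, that $c=1$ is forced by applying $J$ twice (since $J^2 = \id$ under the identification $S((\beta\transp)\transp) = S(\beta)$), which already gives $c^2 = 1$ and hence $c=1$ by positivity — making the explicit constant computation strictly unnecessary, though it is reassuring to see it come out consistently.
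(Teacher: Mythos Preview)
Your proposal is correct and follows essentially the same route as the paper: establish that $J_*\eta_\beta$ is $G$-invariant on $S(\beta\transp)$ via the relation $J(gp)=g^{-\mathsf{T}}J(p)$, invoke uniqueness of invariant measures to get $J_*\eta_\beta=c\,\eta_{\beta\transp}$, and then determine $c=1$ by comparing the asymptotics from Lemma~\ref{etabetaasymptLEM} and checking $C(n;m_1,m_2)=C(n;m_2,m_1)$.

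One small caveat: your final ``sanity check'' via $J^2=\id$ is not quite right as stated. Applying the swap twice gives $c(\beta)\cdot c(\beta\transp)=1$, where $c(\beta)$ is the constant for $J:S(\beta)\to S(\beta\transp)$ and $c(\beta\transp)$ the constant in the reverse direction; a priori these need not be equal, so you only get that they are reciprocals, not that each equals $1$. So the explicit symmetry $C(n;m_1,m_2)=C(n;m_2,m_1)$ (which you do verify) really is needed, and the constant computation is not ``strictly unnecessary.'' This does not affect the correctness of your main argument.
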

\begin{proof}
Using the fact that the measure $\eta_{\beta}$ on $S(\beta)$ is $G$-invariant,
by Lemma \ref{etabetaGinvLEM},
together with the relation 
$J(gp)=g^{-\mathsf{T}} J(p)$
($\forall g\in G$, $p\in S(\beta)$),
it follows that $J_*\eta_{\beta}$ is a $G$-invariant measure on $S(\beta\transp)$.
Lemma \ref{etabetaGinvLEM} also implies that 
$\eta_{\beta\transp}$ is a $G$-invariant measure on $S(\beta\transp)$.
Hence as in the discussion below Lemma \ref{GtransitiveonSbetaLEM},
the two measures $J_*\eta_{\beta}$ and $\eta_{\beta\transp}$ 
are constant multiples of each other, %
and to complete the proof of the lemma, it now suffices to prove that
$J_*\eta_{\beta}(E_R)\sim\eta_{\beta\transp}(E_R)$ as $R\to\infty$,
when $E_R:=S(\beta\transp)\cap\bigl((\scrB_R^n)^{m_2}\times(\scrB_R^n)^{m_1}\bigr)$.
However, by Lemma \ref{etabetaasymptLEM} we have
\begin{align*}
J_*\eta_{\beta}(E_R)=\eta_{\beta}(J^{-1}E_R)\sim C(n;m_1,m_2)\cdot R^{n(m_1+m_2)-2m_1m_2}
\end{align*}
and also
\begin{align*}
\eta_{\beta\transp}(E_R)\sim C(n;m_2,m_1)\cdot R^{n(m_1+m_2)-2m_1m_2}
\end{align*}
as $R\to\infty$.
Hence the lemma follows from the fact that
$C(n;m_1,m_2)=C(n;m_2,m_1)$.
\end{proof}

Next we discuss the weight function $W(\beta)$.
Let us first note a basic invariance relation.
\begin{lem}\label{Winvlem}
Let $m_1,m_2\in\Z^+$ and $\beta\in M_{m_1,m_2}(\Z)$.
Then
\begin{align}\label{Winvlemres}
W(\gamma_1\beta\gamma_2)=W(\beta)\qquad\forall\gamma_1\in\GL_{m_1}(\Z),\:\gamma_2\in\GL_{m_2}(\Z).
\end{align}
\end{lem}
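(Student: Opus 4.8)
The plan is to reduce the statement to an elementary, determinant-preserving bijection between the finite sets $\fW_\beta$ and $\fW_{\gamma_1\beta\gamma_2}$. Since the denominator $\zeta(n)\zeta(n-1)\cdots\zeta(n-m_1+1)$ in \eqref{Wdef} does not involve $\beta$, it suffices to show that the numerator $N(\beta):=\sum_{A\in\fW_\beta}(\det A)^{m_2-n}$ satisfies $N(\gamma_1\beta\gamma_2)=N(\beta)$. Invariance under $\gamma_2\in\GL_{m_2}(\Z)$ is immediate: right multiplication by $\gamma_2$ is a bijection of $M_{m_1,m_2}(\Z)$ onto itself, so the defining condition $A^{-\mathsf{T}}\beta\in M_{m_1,m_2}(\Z)$ of $\fW_\beta$ is unchanged when $\beta$ is replaced by $\beta\gamma_2$; hence $\fW_{\beta\gamma_2}=\fW_\beta$. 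The substance of the lemma is therefore the invariance under left multiplication by $\gamma_1\in\GL_{m_1}(\Z)$.

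The key input here is that $\fW_{m_1}$ is a complete set of representatives for the left cosets $\GL_{m_1}(\Z)\backslash\{M\in M_{m_1,m_1}(\Z)\col\det M\neq0\}$ (this is the row Hermite normal form, and may also be extracted from Lemma~\ref{PLmWmbijectionLEM}). Granting this, for $A\in\fW_\beta$ the matrix $A\gamma_1^{\mathsf{T}}$ is integral and nonsingular, so there is a unique $\phi(A)\in\fW_{m_1}$ with $\phi(A)=V\,A\gamma_1^{\mathsf{T}}$ for some $V\in\GL_{m_1}(\Z)$. I would then check three things: (i) $\phi(A)\in\fW_{\gamma_1\beta}$, which follows from $\phi(A)^{-\mathsf{T}}(\gamma_1\beta)=V^{-\mathsf{T}}A^{-\mathsf{T}}\gamma_1^{-1}\gamma_1\beta=V^{-\mathsf{T}}\bigl(A^{-\mathsf{T}}\beta\bigr)\in M_{m_1,m_2}(\Z)$; (ii) $\det\phi(A)=\det A$, since both are positive and $|\det(V A\gamma_1^{\mathsf{T}})|=|\det A|$; and (iii) $\phi$ is a bijection of $\fW_\beta$ onto $\fW_{\gamma_1\beta}$, with inverse $A'\mapsto$ (the $\fW_{m_1}$-representative of the coset $\GL_{m_1}(\Z)\cdot A'\gamma_1^{-\mathsf{T}}$), which maps $\fW_{\gamma_1\beta}$ into $\fW_\beta$ by the same computation with $\gamma_1$ replaced by $\gamma_1^{-1}$ and is visibly a two-sided inverse of $\phi$. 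Combining (ii) and (iii) gives $N(\gamma_1\beta)=\sum_{A\in\fW_\beta}(\det\phi(A))^{m_2-n}=N(\beta)$, which finishes the proof.

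Conceptually, the map $A\mapsto A^{\mathsf{T}}\Z^{m_1}$ identifies $\fW_{m_1}$ with the set of full-rank sublattices of $\Z^{m_1}$, with $\det A$ equal to the index, and under this identification $A\in\fW_\beta$ exactly when the corresponding sublattice contains every column of $\beta$; then $\gamma_1$ acts as an automorphism of $\Z^{m_1}$ that permutes the full-rank sublattices, preserving indices and intertwining the conditions for $\beta$ and for $\gamma_1\beta$, so the invariance of $N$ becomes transparent (and parallels Lemma~\ref{etabetasymmLEM}). I expect the only delicate point in either version to be purely bookkeeping: keeping straight the transposes and the left/right distinction in the Hermite normal form correspondence. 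There is no substantive obstacle beyond this.
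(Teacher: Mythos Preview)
Your proposal is correct and follows essentially the same approach as the paper: both arguments rest on the fact that $\fW_{m_1}$ is a set of coset representatives for $\GL_{m_1}(\Z)\backslash\{M\in M_{m_1,m_1}(\Z):\det M\neq0\}$, and both establish the $\gamma_1$-invariance via the determinant-preserving bijection induced by right multiplication by $\gamma_1^{\mathsf{T}}$ on these cosets (the paper phrases this in coset language as $(\Delta\backslash U)_{\gamma_1\beta\gamma_2}=\{x\gamma_1^{\mathsf{T}}:x\in(\Delta\backslash U)_\beta\}$, while you build the map $\phi$ explicitly on representatives). One small remark: your parenthetical reference to Lemma~\ref{etabetasymmLEM} is off---that lemma concerns the transposition symmetry of $\eta_\beta$, not a $\GL$-invariance---but this does not affect the argument.
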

\begin{proof}
Let $U$ be the set of non-singular matrices in $M_{m_1,m_1}(\Z)$,
let $\Delta=\GL_{m_1}(\Z)\subset U$,
and let $\Delta\bs U=\{\Delta A\col A\in U\}$
be the set of all right $\Delta$-cosets contained in $U$.
Note that the set $\fW_{m_1}$ contains exactly one element in
each coset $x\in\Delta\bs U$.
Furthermore, for each $x\in\Delta\bs U$, 
the relation
$A^{-\mathsf{T}}\beta\in M_{m_1,m_2}(\Z)$ either holds for
\textit{all} or \textit{no} element $A\in x$,
and we denote by $(\Delta\bs U)_\beta$ the subset
$\{x\in\Delta\bs U\col A^{-\mathsf{T}}\beta\in M_{m_1,m_2}(\Z)\:\forall A\in x\}$.
We also write $|\det x|$ for the common value of
the absolute determinant $|\det A|$ for all $A\in x$.
With this notation,
the formula in \eqref{Wdef} may be expressed as %
\begin{align*}
W(\beta)=\frac{\sum_{x\in(\Delta\bs U)_\beta}|\det x|^{m_2-n}}{\prod_{j=n-m_1+1}^n\zeta(j)}.
\end{align*}
Now $\Delta$ acts on $\Delta\bs U$ by right multiplication;
$(\Delta A)\gamma:=\Delta(A\gamma)$ for any $A\in U$ %
and $\gamma\in\Delta$.
One verifies that 
$(\Delta\bs U)_{\gamma_1\beta\gamma_2}=
\{x\gamma_1^{\mathsf{T}} \col x\in(\Delta\bs U)_\beta\}$
for any $\gamma_1\in\Delta$ and $\gamma_2\in\GL_{m_2}(\Z)$.
Thus
\begin{align*}
\sum_{x\in(\Delta\bs U)_{\gamma_1\beta\gamma_2}}|\det x|^{m_2-n}
=\sum_{x\in(\Delta\bs U)_{\beta}}|\det (x\gamma_1^{\mathsf{T}})|^{m_2-n}
=\sum_{x\in(\Delta\bs U)_{\beta}}|\det x|^{m_2-n}.
\end{align*}
Hence $W(\gamma_1\beta\gamma_2)=W(\beta)$.
\end{proof}

\begin{lem}\label{WsymmLEM}
For any $m_1,m_2\in\Z^+$ and $\beta\in M_{m_1,m_2}(\Z)$,
$W(\beta\transp)=W(\beta)$.
\end{lem}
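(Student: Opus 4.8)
The plan is to prove the identity $W(\beta\transp)=W(\beta)$ by combining the symmetry relation for $\eta_\beta$ already established in Lemma \ref{etabetasymmLEM} with the main mean value formula, Theorem \ref{MAINTHEOREMabs2} (or more precisely its consequence \eqref{MAINTHMpf9}), and the symmetry of the left-hand side of that formula under $L\mapsto L^*$ noted in Remark \ref{DUALsymmrem}. The point is that $W(\beta)$ is \emph{defined} as the constant of proportionality between $F_\beta$ and $\eta_\beta$ in a version of the mean value formula, and both $F_\beta$ and $\eta_\beta$ transform in a controlled way under swapping $m_1\leftrightarrow m_2$ and $\beta\leftrightarrow\beta\transp$.

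Concretely, I would argue as follows. Fix $m_1,m_2\in\Z^+$ with $m_1+m_2<n$ and $\beta\in M_{m_1,m_2}(\Z)$ (the case $m_1+m_2\geq n$ being vacuous for the role $W(\beta)$ plays, though one can also check it directly from \eqref{Wdef}). Consider the functional
\begin{align*}
\Phi_\beta(\rho):=\int_{X_n}\sum_{x\in L^{m_1}\cap U_{m_1}}\sum_{\substack{y\in(L^*)^{m_2}\cap U_{m_2}\\(x\transp y=\beta)}}\rho(x,y)\,d\mu(L),
\end{align*}
which by the argument in Section \ref{DIRECTPROOFsec} (culminating in \eqref{MAINTHMpf9}) equals $W(\beta)\,\eta_\beta(\rho)$ for $\rho\in C_c(S(\beta))$. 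Now apply the diffeomorphism $L\mapsto L^*$ in the integral: since $\mu$ is $L\mapsto L^*$-invariant and $(L^*)^*=L$, writing $\widetilde\rho(x,y):=\rho(y,x)$ we get $\Phi_\beta(\rho)=\Phi_{\beta\transp}(\widetilde\rho)$ — indeed the roles of $L$ and $L^*$ are exchanged, the roles of $m_1$ and $m_2$ are exchanged, and the constraint $x\transp y=\beta$ becomes $y\transp x=\beta$, i.e.\ $x\transp y=\beta\transp$ after relabeling. Hence
\begin{align*}
W(\beta)\,\eta_\beta(\rho)=\Phi_\beta(\rho)=\Phi_{\beta\transp}(\widetilde\rho)=W(\beta\transp)\,\eta_{\beta\transp}(\widetilde\rho).
\end{align*}
But $\widetilde\rho=\rho\circ J$ where $J:S(\beta)\to S(\beta\transp)$ is the swap diffeomorphism, so $\eta_{\beta\transp}(\widetilde\rho)=\eta_{\beta\transp}(\rho\circ J)=(J_*\eta_\beta)(\rho\circ J)$... wait — more directly, $\eta_{\beta\transp}(\widetilde\rho)=\int_{S(\beta\transp)}\rho(y,x)\,d\eta_{\beta\transp}(x,y)$, and by Lemma \ref{etabetasymmLEM}, $\eta_{\beta\transp}=J_*\eta_\beta$, so this integral equals $\int_{S(\beta)}\rho(x,y)\,d\eta_\beta(x,y)=\eta_\beta(\rho)$. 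Therefore $W(\beta)\,\eta_\beta(\rho)=W(\beta\transp)\,\eta_\beta(\rho)$ for all $\rho\in C_c(S(\beta))$; choosing $\rho$ with $\eta_\beta(\rho)\neq0$ (possible since $\eta_\beta$ is a nonzero measure) gives $W(\beta)=W(\beta\transp)$.

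The main thing to be careful about is the bookkeeping in the step $\Phi_\beta(\rho)=\Phi_{\beta\transp}(\widetilde\rho)$: one must check that under $L\mapsto L^*$ the primitivity/independence conditions and the index sets match up correctly after interchanging $m_1$ and $m_2$, so that the functional genuinely becomes $\Phi_{\beta\transp}$ evaluated at $\widetilde\rho$ and not something else. This is entirely parallel to the $G$-invariance computation in the proof of Lemma \ref{GinvfunctionalLEM}, using $g^{-\mathsf{T}}L^* = (gL)^*$ with $g$ ranging so that $L$ ranges over $X_n$; here instead we just use the single involution $L\mapsto L^*$. Once that identification is in place, everything else is a direct application of Lemma \ref{etabetasymmLEM} and the fact that $\eta_\beta$ is a nonzero Borel measure. (Alternatively, and perhaps more cleanly, one can prove the lemma purely combinatorially from the definition \eqref{Wdef} by exhibiting, via Smith normal form, a bijection between $\fW_\beta$ and $\fW_{\beta\transp}$ preserving $|\det A|$ — but the measure-theoretic route above is shorter given the machinery already developed.)
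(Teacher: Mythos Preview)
Your argument is correct and takes a genuinely different route from the paper. The paper proves Lemma \ref{WsymmLEM} purely combinatorially: using Lemma \ref{Winvlem} and Smith normal form it reduces to $\beta=(D\:0)$ with $D$ diagonal (assuming $m_1\leq m_2$), then shows $W(\beta\transp)=W(D')$ with $D'=\scmatr{0}{D}\in M_{m_2,m_1}(\Z)$, and computes the sum over $\fW_{D'}$ directly by block-decomposing $A=\smatr{A_1}{A_{12}}{0}{A_2}$, observing that $A\in\fW_{D'}$ iff $A_2\in\fW_D$, and summing over $A_1,A_{12}$ to produce the extra factor $\prod_{j=n-m_2+1}^{n-m_1}\zeta(j)$ needed to match the two zeta products. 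Your approach instead leverages the already-established identity $\Phi_\beta=W(\beta)\eta_\beta$ (from \eqref{MAINTHMpf7}--\eqref{MAINTHMpf8} and Proposition \ref{cbetaexplPROP}) together with the $L\mapsto L^*$ symmetry of $\mu$ and Lemma \ref{etabetasymmLEM}; this is logically non-circular since those results are proved without reference to Lemma \ref{WsymmLEM}, and it is indeed shorter given the machinery in place. The paper's route has the advantage of being self-contained from the definition \eqref{Wdef} and giving a direct combinatorial explanation; your route makes the symmetry conceptually transparent as a consequence of the duality invariance of the mean value formula.

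One small correction to your final parenthetical: the combinatorial alternative is \emph{not} simply a $|\det A|$-preserving bijection $\fW_\beta\leftrightarrow\fW_{\beta\transp}$, since the exponents in the two sums are $m_2-n$ and $m_1-n$ respectively and the normalizing zeta products differ; the paper's actual computation shows how the discrepancy is absorbed by the free sum over the $A_1$-block, which is more delicate than a bijection.
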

\begin{proof}
Replacing $\beta$ by $\beta\transp$ if necessary,
we may assume that $m_1\leq m_2$.
By the Smith Normal Form Theorem,
there exist
$\gamma_1\in\GL_{m_1}(\Z)$ and $\gamma_2\in\GL_{m_2}(\Z)$
such that, in block matrix notation,
$\gamma_1\beta\gamma_2=\bigl(D \hspace{5pt} 0\bigr)$ for some diagonal matrix $D\in M_{m_1,m_1}(\Z)$.
If $m_1=m_2$ then
$\gamma_1\beta\gamma_2=D=D\transp=\gamma_2^{\mathsf{T}}\beta\transp\gamma_1^{\mathsf{T}}$,
and so
$W(\beta\transp)=W(\beta)$
by Lemma \ref{Winvlem}.

Hence from now on we assume $m_1<m_2$.
Transposing the relation
$\gamma_1\beta\gamma_2=\bigl(D \hspace{5pt} 0\bigr)$
and multiplying from the left by an appropriate permutation matrix,
it follows that there exists 
$\gamma_3\in\GL_{m_2}(\Z)$ such that
$\gamma_3\beta\transp\gamma_1^{\mathsf{T}}=\cmatr{0}{D}=:D'$
in $M_{m_2,m_1}(\Z)$,
so that $W(\beta\transp)=W(D')$ by %
Lemma \ref{Winvlem}.
Now for any 
$A=\matr{A_1}{A_{12}}{0}{A_2}\in\fW_{m_2}$
(with blocks $A_1,A_{12},A_2$ of sizes $(m_2-m_1)\times(m_2-m_1)$,
$(m_2-m_1)\times m_1$ and $m_1\times m_1$, respectively),
one verifies that $A\in\fW_{D'}$ if and only if $A_2\in\fW_D$;
furthermore, for any given $A_2\in\fW_D$
there are exactly
$(\det A_2)^{m_2-m_1}$ possibilities for %
$A_{12}$ which, together with 
$A_1$ freely chosen in $\fW_{m_2-m_1}$,
makes $A\in\fW_{D'}$.
Hence
\begin{align*}
\sum_{A\in\fW_{D'}}(\det A)^{m_1-n}
=\sum_{A_2\in\fW_D}(\det A_2)^{m_2-m_1}\sum_{A_1\in\fW_{m_2-m_1}}\bigl(\det A_1\,\det A_2\bigr)^{m_1-n}
\hspace{50pt}
\\
=\sum_{A_2\in\fW_D}(\det A_2)^{m_2-n} \prod_{j=n-m_2+1}^{n-m_1}\zeta(j).
\end{align*}
Using this, and the fact that $\fW_{(D\:0)}=\fW_D$,
we get
\begin{align*}
W(\beta\transp)
=W(D')
=\frac{\sum_{A\in\fW_{D'}}(\det A)^{m_1-n}}{\prod_{j=n-m_2+1}^n\zeta(j)}
=\frac{\sum_{A\in\fW_D}(\det A)^{m_2-n}}{\prod_{j=n-m_1+1}^n\zeta(j)}
=W\bigl((D\hspace{5pt}0)\bigr)
=W(\beta).
\end{align*}
\end{proof}

\section{Outline of an alternative proof %
using Poisson summation}
\label{proof1sec}

In this section we give an outline of an alternative proof of our main result,
Theorem \ref{MAINTHEOREMabs2}.

The main idea of the proof is to apply the Poisson summation formula
in both %
sides of \eqref{MAINTHEOREMabs2res}.
The manipulations which we will describe can be justified for any function
$\rho:(\R^n)^{k_1}\times(\R^n)^{k_2}\to\C$
such that $\rho$ and all its derivatives up to a sufficiently high order are of 
rapid polynomial decay
(cf.\ Remark \ref{ROGERSFORMULATHMabsconvrem}).
Once the equality in \eqref{MAINTHEOREMabs2res}
has been proved for this family of test functions,
it follows that we have a corresponding equality of regular Borel measures on 
$(\R^n)^{k_1}\times(\R^n)^{k_2}$
(cf.\ Lemma \ref{GinvfunctionalLEM}),
and in particular \eqref{MAINTHEOREMabs2res} also holds for the class of
functions appearing in the statement of Theorem \ref{MAINTHEOREMabs2}.

In the left-hand side of \eqref{MAINTHEOREMabs2res}, 
we apply Poisson summation
to the sums over $L^*$,
to obtain
\begin{align}\label{MTHMNEWpf100}
\int_{X_n}\sum_{\vecv_1,\ldots,\vecv_{k}\in L}\trho(\vecv_1,\ldots,\vecv_{k})\,d\mu(L),
\end{align}
with
\begin{align}\label{trhoDEF}
\trho(x,v):=\int_{M_{n,k_2}(\R)}\rho(x,y)e(-\Tr(v\transp y))\,dy.
\end{align}
The integral in \eqref{MTHMNEWpf100} may be evaluated using Rogers' mean value formula,
Theorem \ref{ROGERSFORMULATHMp}.
The main work of the proof now consists in applying
Poisson summation to the sum over $\beta$ 
in the right-hand side of \eqref{MAINTHEOREMabs2res},
and seeking to match the resulting expressions.

Let us fix an arbitrary choice of $m_1,B_1,m_2,B_2$ appearing in 
\eqref{MAINTHEOREMabs2res}.
In the following we will assume $m_1>0$ and $0<m_2<k_2$,
but with appropriate interpretations, the argument which we present
can be seen to also cover the remaining cases
(cf.\ also Remark \ref{emptymatricesREM}).
Using the definitions of $\eta_\beta$ and $W(\beta)$
(cf.\ \eqref{Wdef} and \eqref{NUbetaDEFnew}),
and substituting
$\beta=A\transp\gamma$ 
(with $A\in\fW$ and $\gamma\in M_{m_1,m_2}(\Z)$),
we find that
the sum over $\beta$ in \eqref{MAINTHEOREMabs2res} can be rewritten as
\begin{align}\label{MTHMNEWpf1}
\sum_{A\in\fW}\sum_{\gamma\in M_{m_1,m_2}(\Z)}\frac{(\det A)^{m_2-n}}{\zeta(n)\zeta(n-1)\cdots\zeta(n-m_1+1)}
\int_{U_{m_1}} F_x(A\transp\gamma)\,\frac{dx}{\dd(x)^{m_2}},
\end{align}
where, for any $x\in U_{m_1}$,
the function $F_x$ is given by
\begin{align}\label{MTHMNEWpf1help}
F_x:M_{m_1,m_2}(\R)\to\C;
\qquad
F_x(\beta)=\int_{S(\beta)_x}\rho\bigl(xB_1^{\mathsf{T}},yB_2^{\mathsf{T}}\bigr)\,d\eta_{\beta,x}(y).
\end{align}
In \eqref{MTHMNEWpf1} we move the sum over $\gamma$ inside the integral,
and apply the Poisson summation formula to
the resulting inner sum,
$\sum_{\gamma %
}F_x(A\transp\gamma)$.
For this we need to compute the Fourier transform of $F_x$,
\begin{align*}
\hF_x(\xi):=\int_{M_{m_1,m_2}(\R)}F_x(\beta)e(-\Tr(\xi\transp  \beta))\,d\beta
\qquad
(\xi\in M_{m_1,m_2}(\R)).
\end{align*}
Noticing that $S(\beta)_x'=x(x\transp x)^{-1}\beta+(x^\perp)^{m_2}$
we may rewrite the definition of $F_x$ in \eqref{MTHMNEWpf1help}
as an integral over $y'\in x^\perp$,
with the integrand being $\rho(xB_1^{\mathsf{T}},(x(x\transp x)^{-1}\beta+y')B_2^{\mathsf{T}})$.
One verifies that the map taking
$\langle \beta,y'\rangle$ to $(x(x\transp x)^{-1}\beta+y')B_2\transp$
is a linear bijection from 
$M_{m_1,m_2}(\R)\oplus (x^\perp)^{m_2}$
onto the subspace
$(V_{B_2}^n)\transp$ of $M_{n,k_2}(\R)$,
and we thus obtain
\begin{align*}
\hF_x(\xi):=\dd(x)^{m_2}\dd(B_2)^{-n}
\int_{(V_{B_2}^n)\transp}
\rho\bigl(xB_1^{\mathsf{T}},w\bigr)\,e\bigl(-\Tr\bigl(\xi\transp  x\transp wB_2(B_2^{\mathsf{T}}B_2)^{-1}\bigr)\bigr)\,dw.
\end{align*}
Here note that 
$\Tr\bigl(\xi\transp  x\transp wB_2(B_2^{\mathsf{T}}B_2)^{-1}\bigr)
=\Tr\bigl(\bigl(x\xi(B_2^{\mathsf{T}}B_2)^{-1}B_2^{\mathsf{T}}\bigr)\transp w\bigr)$;
hence the last integral is the Fourier transform of the
function $\rho\bigl(xB_1^{\mathsf{T}},\cdot\,\bigr)$ restricted to $(V_{B_2}^n)\transp$,
evaluated at the point $x\xi(B_2^{\mathsf{T}}B_2)^{-1}B_2^{\mathsf{T}}$.
In view of \cite[Thm.\ 4.42 and the ensuing remark]{gF95a},
this can be re-expressed as 
an integral over a translate of the orthogonal complement of 
$(V_{B_2}^n)\transp$ in $M_{n,k_2}(\R)$,
viz., $((V_{B_2}^\perp)^n)\transp$, of the 'full' Fourier transform $\trho$ (cf.\ \eqref{trhoDEF}):
\begin{align}\label{hFxintermsoftrho}
\hF_x(\xi):=\dd(x)^{m_2}\dd(B_2)^{-n}
\int_{((V_{B_2}^\perp)^n)\transp}
\trho\bigl(xB_1^{\mathsf{T}},x\xi(B_2^{\mathsf{T}}B_2)^{-1}B_2^{\mathsf{T}}+u\bigr)\,du.
\end{align}
Hence, by the Poisson summation formula,
we conclude that \eqref{MTHMNEWpf1} equals:
\begin{align}\label{MTHMNEWpf2}
\sum_{A\in\fW}\frac{(\det A)^{-n}}{\zeta(n)\zeta(n-1)\cdots\zeta(n-m_1+1)}
\,\dd(B_2)^{-n}
\hspace{180pt}
\\\notag
\times\,\int_{M_{n,m_1}(\R)}
\sum_{\xi\in M_{m_1,m_2}(\Z)}
\int_{((V_{B_2}^\perp)^n)\transp}
\trho\Bigl(xB_1^{\mathsf{T}}, xA^{-1} \,\xi(B_2^{\mathsf{T}}B_2)^{-1}B_2^{\mathsf{T}}+u\Bigr)\,du\,dx.
\end{align}

Next, in \eqref{MTHMNEWpf2}, we substitute $\xi:=A \xi'$,
and then move the summation over $\xi'$ 
to the leftmost position.
Then $\xi'$ runs through $M_{m_1,m_2}(\Q)$,
and for each given   %
$\xi'\in M_{m_1,m_2}(\Q)$,
$A$ runs through all $A\in\fW$ satisfying $A\xi'\in M_{m_1,m_2}(\Z)$.
For any $\xi\in M_{m_1,m_2}(\Q)$ we write $q_\xi\in\Z^+$ for the least common denominator of the entries of $\xi$,
that is, the smallest positive integer $q$ such that $q\xi\in M_{m_1,m_2}(\Z)$.
Then the last relation,
$A\xi'\in M_{m_1,m_2}(\Z)$,
is equivalent with
$A\, q_{\xi'}\,\xi'\equiv 0\mod q_{\xi'}$.
We will need the following lemma, applied with $\theta:=q_{\xi'}\,\xi'\mod q_{\xi'}$.

\begin{lem}\label{LINALGlem}
Let $q$ be a positive integer and let $\theta\in M_{m_1,m_2}(\Z/q\Z)$.
Then
\begin{align}\label{LINALGCONJ3}
\sum_{\substack{A\in\fW\\ A\theta\equiv0\mod q}}
\frac{(\det A)^{-n}}{\zeta(n)\zeta(n-1)\cdots\zeta(n-m_1+1)}
=\Bigl(\frac{N}{q^{m_1}}\Bigr)^{n},
\end{align}
where
\begin{align}\label{LINALGCONJ4}
N=\#\bigl\{\veca\in(\Z/q\Z)^{m_1}\col\theta\transp\veca=\bn\text{ in $(\Z/q\Z)^{m_2}$}\bigr\}.
\end{align}
\end{lem}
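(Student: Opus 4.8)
The plan is to reduce the sum over $\fW$ to a purely $p$-adic / finite combinatorial count, and to do so via a Dirichlet-series (Euler product) argument combined with the local structure of Hermite normal forms. First I would drop the denominator $\zeta(n)\cdots\zeta(n-m_1+1)$ to one side and note that, by the standard Hermite normal form identity, $\fW_{m_1}$ is a set of coset representatives for $\GL_{m_1}(\Z)\backslash\{A\in M_{m_1}(\Z)\col\det A\neq0\}$; hence the claim is equivalent to
\begin{align*}
\sum_{\substack{\Delta A\in\Delta\backslash U\\ A\theta\equiv0\:(q)}}|\det A|^{-n}=\Bigl(\frac N{q^{m_1}}\Bigr)^n\prod_{j=n-m_1+1}^n\zeta(j),
\end{align*}
where $\Delta=\GL_{m_1}(\Z)$ and $U$ is the set of nonsingular integer $m_1\times m_1$ matrices, with $N$ as in \eqref{LINALGCONJ4}; note the condition $A\theta\equiv0\:(q)$ is well defined on $\Delta\backslash U$ since left multiplication by $\Delta$ preserves it. When $q=1$ the condition is vacuous and $N=1$, so this is precisely the classical evaluation $\sum_{\Delta A}|\det A|^{-n}=\prod_{j=n-m_1+1}^n\zeta(j)$ (the subgroup-counting zeta function of $\Z^{m_1}$), which I would invoke or re-derive.

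Next I would set up the Dirichlet series $Z(s):=\sum_{\Delta A\in\Delta\backslash U,\,A\theta\equiv0\:(q)}|\det A|^{-s}$ and show it factors as an Euler product over primes $p$, with the factor at $p$ depending only on $\theta\bmod p^{v_p(q)}$; this is the Chinese Remainder Theorem applied to the lattice $A\Z^{m_1}\subset\Z^{m_1}$ together with multiplicativity of $\det$. For a prime $p\nmid q$ the local factor is the usual $\prod_{j=1}^{m_1}(1-p^{j-1-s})^{-1}$. For $p\mid q$ I would parametrize the local sublattices $A\Z_p^{m_1}\subset\Z_p^{m_1}$ of index $p^e$ and, for each, count how many satisfy $A\theta\equiv0\bmod q_p$ where $q_p=p^{v_p(q)}$. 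The key observation is that $A\theta\equiv0\bmod q_p$ says exactly that the columns of $\theta$, read mod $q_p$, lie in the image of $A$ acting on $(\Z/q_p\Z)^{m_1}$; dually, the rows — equivalently the solution count $N_p:=\#\{\veca\in(\Z/q_p\Z)^{m_1}\col\theta\transp\veca\equiv0\}$ — is what enters, and $N=\prod_{p\mid q}N_p$. Summing the local series and matching the pole at $s=n$ (or better, evaluating identically as a product of shifted zeta factors times an elementary rational function of $p$) should give the local factor $\Bigl(\frac{N_p}{q_p^{m_1}}\Bigr)^n\prod_{j=n-m_1+1}^n(1-p^{-j})^{-1}$, and taking the product over all $p$ yields \eqref{LINALGCONJ3} at $s=n$.

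The main obstacle is the local computation for $p\mid q$: one must evaluate $\sum_{e\geq0}p^{-es}\cdot\#\{\text{index-}p^e\text{ sublattices }M\subseteq\Z_p^{m_1}\text{ with }\overline\theta\in M/q_p M\text{-compatible}\}$ and recognize the answer as the advertised closed form. I would handle this by choosing, via the Smith normal form of $\theta$ over $\Z_p$, coordinates in which $\theta=\diag(p^{a_1},\dots)$ padded with zeros, so that $N_p=q_p^{m_1}\prod_i p^{-\min(a_i,v_p(q))}$ becomes explicit; the sublattice count then separates coordinate-by-coordinate into geometric-type sums over Hermite forms, each of which telescopes. An alternative, perhaps cleaner, route that I would try first is to avoid Dirichlet series entirely: directly partition $\fW_{m_1}$ by the sublattice $A^{-1}\Z^{m_1}\supseteq\Z^{m_1}$ it determines (via $A^{-\mathsf T}$), relate the weight $(\det A)^{-n}$ to the covolume, and identify $\sum(\det A)^{-n}$ over the constrained set as an integral/count that Theorem \ref{ROGERSbasicformulaTHM} or the measure $\eta_\beta$ already evaluates — but the Euler-product argument is the safe fallback and I expect it to go through with only the finite-$p$ bookkeeping above as real work.
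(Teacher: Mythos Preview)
Your approach is viable but takes a genuinely different and heavier route than the paper's. The paper gives a completely elementary argument that never leaves $\Z/q\Z$: it defines the ``staircase'' subgroups
\[
I_j=\bigl\{a_j\in\Z/q\Z\col\exists a_{j+1},\ldots,a_{m_1}\text{ with }(0,\ldots,0,a_j,a_{j+1},\ldots,a_{m_1})\in S\bigr\},
\]
where $S$ is the solution set in \eqref{LINALGCONJ4}; each $I_j=d_jR$ for some $d_j\mid q$, and a short counting argument gives $N=q^{m_1}/\prod_j d_j$. On the other side, the upper-triangular shape of $A\in\fW_{m_1}$ means row $j$ of $A$ has the form $(0,\ldots,0,a_{jj},\ldots,a_{jm_1})$, so $A\theta\equiv0$ forces $a_{jj}\in d_j\Z$; writing $a_{jj}=d_jc_j$ and counting the admissible off-diagonal entries row by row (using the same staircase structure) gives exactly $\prod_{j\geq2}c_j^{j-1}$ choices for fixed diagonal, whence the sum over $\fW$ collapses to $\bigl(\prod_j d_j\bigr)^{-n}$ after summing the $c_j$. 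No Euler product, no Smith normal form, no Dirichlet series.

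Your plan---reinterpret the sum as a constrained subgroup zeta function, factor over primes, and evaluate the $p$-local factor after diagonalizing $\theta$ by Smith normal form over $\Z_p$---should also go through, and has the conceptual advantage of making the multiplicativity in $q$ transparent. The cost is that the local bookkeeping (the ``separates coordinate-by-coordinate'' step) still requires essentially the same row-by-row Hermite form analysis the paper does globally, just repeated at each prime; so you are not really saving work, only reorganizing it. The paper's argument is shorter because the staircase filtration $I_1\supset I_2\supset\cdots$ is exactly tailored to the upper-triangular structure of $\fW$, and works uniformly over $\Z/q\Z$ without any prime decomposition.
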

\begin{proof}
Let $R$ be the ring $\Z/q\Z$.
The lemma follows from a study of the solution set
\begin{align*}
S:=\bigl\{\veca\in R^{m_1}\col\veca\theta=\bn\text{ in }R^{m_2}\bigr\}.
\end{align*}
(Note that the equation $\veca\theta=\bn$ is equivalent with the equation $\theta\transp\veca=\bn$
appearing in \eqref{LINALGCONJ4}, up to switching between row and column vectors;
$\veca\leftrightarrow\veca\transp$.)
For $j\in\{1,\ldots,m_1\}$, set
\begin{align*}
I_j:=\bigl\{a_j\in R\col\exists a_{j+1},\ldots,a_{m_1}\in R\text{ such that }
(0\,\cdots\,0\,a_j\,a_{j+1}\,\cdots\,a_{m_1})\in S\bigr\}.
\end{align*}
Then $I_j$ is a subgroup of $\langle R,+\rangle$;
hence there is some $d_j\mid q$ such that
$I_j=d_jR$.
For any $d\mid q$ let us write $A_d:=\{0,1,\ldots,d-1\}\subset R$.
It is now straightforward to verify that
\begin{align}\label{LINALGCONJpf10}
\left\{
\text{\parbox{390pt}{for any $j\in\{1,\ldots,m_1\}$,
any $b_j\in d_jR$ and any $b_{j+1},\ldots,b_{m_1}\in R$,
there is a \textit{unique} solution
$\veca\in S$ in the 'box'
$\{0\}^{j-1}\times\{b_j\}\times (b_{j+1}+A_{d_{j+1}})\times\cdots\times(b_{m_1}+A_{d_{m_1}})$.}}\right.
\end{align}

In particular, noticing that $R^{m_1-1}$ can be partitioned into 
$\prod_{j=2}^{m_1}(q/d_j)$ boxes of the form $(b_2+A_{d_2})\times\cdots\times(b_{m_1}+A_{d_{m_1}})$,
it follows from \eqref{LINALGCONJpf10} applied with $j=1$ that
\begin{align}\label{LINALGCONJpf1}
N=\#S=\#(d_jR)\cdot\prod_{j=2}^{m_1}(q/d_j)=\frac{q^{m_1}}{\prod_{j=1}^{m_1}d_j}.
\end{align}

Next we consider the sum in \eqref{LINALGCONJ3}. %
Note that the equation  $A\theta\equiv0\mod q$ means that for each $j$,
row number $j$ of $A$ satisfies the equation appearing in the definition of $I_j$.
It follows that for any matrix $A$ appearing in the sum in 
\eqref{LINALGCONJ3},
the diagonal entries of $A$ must be
$d_1c_1,d_2c_2,\ldots,d_{m_1}c_{m_1}$ for some positive integers $c_1,\ldots,c_{m_1}$,
and having fixed this diagonal, 
it follows via \eqref{LINALGCONJpf10}
that for each $j$, the $j$th row of $A$ can be choosen
in $\prod_{i=j+1}^{m_1}c_j$ ways,
and so the total number of matrices $A$ with this diagonal is
$\prod_{j=1}^{m_1}\prod_{i=j+1}^{m_1}c_j
=\prod_{j=2}^{m_1}c_j^{j-1}$.
Hence the left-hand side of \eqref{LINALGCONJ3} equals
\begin{align*}
\sum_{c_1=1}^{\infty}\cdots \sum_{c_{m_1}=1}^{\infty}\biggl(\prod_{j=2}^{m_1}c_j^{j-1}\biggr)
\times\frac{\prod_{j=1}^{m_1}(c_jd_j)^{-n}}{\zeta(n)\zeta(n-1)\cdots\zeta(n-m_1+1)}
=\Bigl(\prod_{j=1}^{m_1}d_j\Bigr)^{-n}.
\end{align*}
Combining this with \eqref{LINALGCONJpf1} we obtain \eqref{LINALGCONJ3}.
\end{proof}
By Lemma \ref{LINALGlem}
and the discussion preceding it,
\eqref{MTHMNEWpf2} equals:   %
\begin{align}\label{MTHMNEWpf3}
\sum_{\xi\in M_{m_1,m_2}(\Q)}\biggl(\frac{N_{\xi}}{q_{\xi}^{m_1}}\biggr)^n
\dd(B_2)^{-n}\,
\int_{M_{n,m_1}(\R)}
\int_{((V_{B_2}^\perp)^n)\transp}
\trho\Bigl(xB_1^{\mathsf{T}}, x\xi(B_2^{\mathsf{T}}B_2)^{-1}B_2^{\mathsf{T}}+u\Bigr)\,du\,dx,
\end{align}
where 
\begin{align}\label{Nxidef}
N_\xi:=\#\{\veca\in(\Z/q_{\xi}\Z)^{m_1}\col q_{\xi}\xi\transp\veca\equiv\bn\mod q_{\xi}\}.
\end{align}

Set $m_2':=k_2-m_2$.
Recall that we are assuming $0<m_2<k_2$; thus $0<m_2'<k_2$.
Using the fact that the map $V\mapsto V^\perp$
gives a bijection from the family of $m_2$-dimensional rational subspaces of $\R^{k_2}$
onto the family of $m_2'$-dimensional rational subspaces of $\R^{k_2}$,
it follows that
for every $B_2\in A_{k_2,m_2}$ there exists a unique
matrix $\tB_2\in A_{k_2,m_2'}$ satisfying $V_{B_2}^\perp=V_{\tB_2}$,
and the map $B_2\mapsto\tB_2$ is a bijection from 
$A_{k_2,m_2}$ onto $A_{k_2,m_2'}$.
With this notation,
the map $y\mapsto y{\tB_2}\transp$ is a linear bijection from $M_{n,m_2'}(\R)$
onto $((V_{B_2}^\perp)^n)\transp$,
which scales volume by a factor $\dd(\tB_2)^n$,
and by \cite[Sec.\ 2 (Corollary)]{wS68}
we have
$\dd(\tB_2)=\dd(B_2)$.
Hence \eqref{MTHMNEWpf3} equals
\begin{align}\label{MTHMNEWpf3rep}
\sum_{\xi\in M_{m_1,m_2}(\Q)}\biggl(\frac{N_{\xi}}{q_{\xi}^{m_1}}\biggr)^n
\int_{M_{n,m_1}(\R)}
\int_{M_{n,m_2'}(\R)}
\trho\Bigl(xB_1^{\mathsf{T}}, x\xi(B_2^{\mathsf{T}}B_2)^{-1}B_2^{\mathsf{T}}+y{\tB_2}\transp\Bigr)\,dy\,dx,
\end{align}
Set $k:=k_1+k_2$,
and view $\trho$ as a function on $M_{n,k}(\R)$
by identifying any pair $\langle u,v\rangle$ 
in $M_{n,k_1}(\R)\times M_{n,k_2}(\R)$ with the 
block matrix $\bigl(u\: v\bigr)$ in $M_{n,k}(\R)$.
We also set
\begin{align}\label{alphaintermsofxi}
\alpha:=(B_2^{\mathsf{T}} B_2)^{-1}\xi\transp\in M_{m_2,m_1}(\Q).
\end{align}
Using this notation, \eqref{MTHMNEWpf3rep}
equals
\begin{align}\label{MTHMNEWpf4}
\sum_{\xi\in M_{m_1,m_2}(\Q)}\biggl(\frac{N_{\xi}}{q_{\xi}^{m_1}}\biggr)^n
\int_{M_{n,m_1+m_2'}(\R)}\trho\left(z
{\matr{B_1}0{B_2\alpha}{\tB_2}}^{\!\mathsf{T}\:}
\right)\,dz.
\end{align}

To sum up, we have prove that
the sum over $\beta$ in \eqref{MAINTHEOREMabs2res} 
equals \eqref{MTHMNEWpf4}.
Next, as in \eqref{MAINTHEOREMabs2res},
this expression should be added over all
$B_1\in A_{k_1,m_1}$ and $B_2\in A_{k_2,m_2}$.
The following lemma will allow us
to rearrange in a convenient way the triple sum over $B_1,B_2,\xi$ 
which then appears.
Let us set $P_1:=\bigl(I_{k_1}\: 0\bigr)\in M_{k_1,k}(\Z)$,
so that $P_1(v_1,\ldots,v_k)\transp=(v_1,\ldots,v_{k_1})\transp$ for all $\vecv=(v_1,\ldots,v_k)\transp\in\R^k$.

\begin{lem}\label{B1B2alphabijLEM}
Given any $B_1\in A_{k_1,m_1}$, $B_2\in A_{k_2,m_2}$
and $\alpha\in M_{m_2,m_1}(\Q)$,
there exist unique matrices $B_3\in A_{k,m_1+m_2'}$ and 
$J\in\GL_{m_1+m_2'}(\R)$
satisfying
\begin{align}\label{B1B2alphabijLEMres}
\matr{B_1}0{B_2\alpha}{\tB_2}=B_3J,
\end{align}
and the map $\langle B_1,B_2,\alpha\rangle\mapsto B_3$ 
defined by this relation
is a bijection from
$A_{k_1,m_1}\times A_{k_2,m_2}\times M_{m_2,m_1}(\Q)$
onto the set of all $B_3\in A_{k,m_1+m_2'}$ satisfying 
$\dim P_1V_{B_3} %
=m_1$.
Furthermore, when the above relation holds, we also have
\begin{align}\label{B1B2alphabijLEMres2}
|\det J|=N_\xi/q_{\xi}^{m_1}
\qquad\text{with }\:
\xi=%
\alpha\transp B_2\transp B_2.
\end{align}
\end{lem}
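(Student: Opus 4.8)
Throughout write $r:=m_1+m_2'$ and $M:=\matr{B_1}0{B_2\alpha}{\tB_2}\in M_{k,r}(\R)$, with column span $V_M:=M\R^r$. The plan is to reduce the whole statement to the elementary geometry of the rational subspace $V_M$. First I would prove existence and uniqueness of $B_3$ and $J$. The matrix $M$ has full column rank $r$: its last $m_2'$ columns span $\{\bn\}\times V_{\tB_2}$, while $P_1$ carries its first $m_1$ columns onto the $m_1$-dimensional space $V_{B_1}$, so the two blocks of columns are linearly independent. Clearing the denominators of $\alpha$ exhibits a spanning set of $V_M$ consisting of rational vectors, so $V_M$ is a rational $r$-dimensional subspace of $\R^k$; by the bijection $B\mapsto V_B$ from $A_{k,r}$ onto the rational $r$-subspaces of $\R^k$ there is a unique $B_3\in A_{k,r}$ with $V_{B_3}=V_M$, and then each column of $M$ lies in $V_{B_3}$ and hence equals $B_3$ times a unique vector, which defines the unique $J$ with $M=B_3J$; $J\in\GL_r(\R)$ because $M$ has rank $r$. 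Uniqueness of the pair is immediate: $B_3J=B_3'J'$ forces $V_{B_3}=V_M=V_{B_3'}$, so $B_3=B_3'$, and then $J=J'$ by left-cancellation.

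Next I would identify the image and prove bijectivity. Since $P_1M=(B_1\ 0)$, we get $P_1V_{B_3}=P_1V_M=V_{B_1}$, which has dimension $m_1$, so the image lies in $\{B_3\in A_{k,r}:\dim P_1V_{B_3}=m_1\}$. For the converse I would construct the inverse map: given such a $B_3$, set $V:=V_{B_3}$; then $P_1V$ is a rational $m_1$-subspace of $\R^{k_1}$, hence $P_1V=V_{B_1}$ for a unique $B_1\in A_{k_1,m_1}$; $V\cap\ker P_1$ is a rational subspace of dimension $r-m_1=m_2'$ lying inside $\{\bn\}\times\R^{k_2}$, hence of the form $\{\bn\}\times V_{\tB_2}$ for a unique $\tB_2\in A_{k_2,m_2'}$, which determines $B_2\in A_{k_2,m_2}$ via $V_{B_2}=V_{\tB_2}^{\perp}$; and $V$ is the $\R$-span of $\{\bn\}\times V_{\tB_2}$ together with the graph of a unique linear map $\phi:V_{B_1}\to V_{B_2}$ (rational, since $V$ is), whose matrix in the column bases of $B_1$ and $B_2$ is the sought $\alpha\in M_{m_2,m_1}(\Q)$. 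A direct check — recomputing $P_1V_M$, $V_M\cap\ker P_1$ and the associated linear map for the explicit $M$ — shows that this construction and $\langle B_1,B_2,\alpha\rangle\mapsto B_3$ are mutually inverse, which gives the bijection.

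Finally, for the formula $|\det J|=N_\xi/q_\xi^{m_1}$: from $M=B_3J$ and the defining property $B_3\Z^r=V_{B_3}\cap\Z^k$ of $A_{k,r}$, the quantity $|\det J|$ equals the ratio of covolumes $\vol\bigl(V_M/M\Z^r\bigr)/\vol\bigl(V_M/(V_M\cap\Z^k)\bigr)$ inside $V_M$. Both lattices $M\Z^r$ and $V_M\cap\Z^k$ meet $\ker P_1$ in exactly $K_\Z:=\{\bn\}\times\tB_2\Z^{m_2'}$, so by multiplicativity of covolume in the exact sequences $0\to K_\Z\to\Lambda\to P_1\Lambda\to0$ the ratio collapses to $[\,B_1\Z^{m_1}:W_1'\,]^{-1}$, where $W_1':=P_1(V_M\cap\Z^k)\subseteq P_1(M\Z^r)=B_1\Z^{m_1}$. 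A short computation then shows that a vector $\cmatr{B_1\vecc}{B_2\alpha\vecc+\tB_2\vecd}$ of $V_M$ lies in $\Z^k$ iff $\vecc\in\Z^{m_1}$ and $B_2\alpha\vecc\in V_{\tB_2}+\Z^{k_2}$, and the latter is equivalent to $B_2\alpha\vecc$ lying in the orthogonal projection of $\Z^{k_2}$ onto $V_{B_2}=V_{\tB_2}^{\perp}$, i.e.\ (in the basis $B_2$) to $(B_2\transp B_2)\alpha\vecc\in B_2\transp\Z^{k_2}$. Here I would invoke that $B_2$ is primitive, so all its elementary divisors equal $1$ and $B_2\transp\Z^{k_2}=\Z^{m_2}$; together with $(B_2\transp B_2)\alpha=\xi\transp$ from \eqref{alphaintermsofxi} this gives $W_1'=B_1\{\vecc\in\Z^{m_1}:\xi\transp\vecc\in\Z^{m_2}\}$. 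Since $\{\vecc\in\Z^{m_1}:\xi\transp\vecc\in\Z^{m_2}\}=\{\vecc\in\Z^{m_1}:q_\xi\xi\transp\vecc\equiv\bn\mod q_\xi\}$ contains $q_\xi\Z^{m_1}$ and has image of cardinality $N_\xi$ in $(\Z/q_\xi\Z)^{m_1}$ by \eqref{Nxidef}, its index in $\Z^{m_1}$ is $q_\xi^{m_1}/N_\xi$; hence $|\det J|=N_\xi/q_\xi^{m_1}$, and $\xi=\alpha\transp B_2\transp B_2$ follows by transposing \eqref{alphaintermsofxi} and using that $B_2\transp B_2$ is symmetric. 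I expect this last paragraph to be the main obstacle — pinning down the integral lattice $V_{B_3}\cap\Z^k$ inside $V_M$ and extracting the index $q_\xi^{m_1}/N_\xi$, for which the collapse $B_2\transp\Z^{k_2}=\Z^{m_2}$ afforded by primitivity of $B_2$ is the crucial point.
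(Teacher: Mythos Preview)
Your proof is correct and follows essentially the same approach as the paper's: both establish existence and uniqueness of $B_3,J$ via the rational subspace $V_M$, construct the inverse map from the triple $(P_1V,\:V\cap\ker P_1,\:\text{the induced linear map }V_{B_1}\to V_{B_2})$, and compute $|\det J|$ by identifying the set $\{\vecc\in\Z^{m_1}:\xi\transp\vecc\in\Z^{m_2}\}$ using the key observation $B_2\transp\Z^{k_2}=\Z^{m_2}$. The only difference is packaging: the paper computes $J^{-1}\Z^{m_1+m_2'}$ directly in $\R^{m_1+m_2'}$ and reads off its covolume, whereas you compare the lattices $M\Z^r$ and $V_M\cap\Z^k$ inside $V_M$ via exact sequences --- but this is the same computation transported along $B_3$.
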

(Note that the 
formula for $\xi$ in \eqref{B1B2alphabijLEMres2} is equivalent with
\eqref{alphaintermsofxi}.)
\begin{proof}[Proof (outline).]
If $D$ is the matrix in the left-hand side of 
\eqref{B1B2alphabijLEMres},
then $D\R^{m_1+m_2'}$ is a 
rational linear subspace of $\R^k$ of dimension $m_1+m_2'$,
and thus there exists a unique matrix 
$B_3\in A_{k,m_1+m_2'}$
satisfying $D\R^{m_1+m_2'}=V_{B_3}$,
and then also the unique existence of $J$ follows.
Next, when \eqref{B1B2alphabijLEMres} holds, we also have
$P_1B_3=P_1DJ^{-1}=\bigl(B_1\:\:0\bigr)J^{-1}$
and hence
$P_1V_{B_3}=V_{B_1}$;
in particular $\dim P_1V_{B_3}=m_1$.

To complete the proof of the bijectivity statement,
it now suffices to prove that if $V$ is an arbitrary 
rational $(m_1+m_2')$-dimensional linear subspaces $V\subset\R^k$ satisfying $\dim P_1V=m_1$,
then there exists unique 
triple $\langle B_1,B_2,\alpha\rangle$
in $A_{k_1,m_1}\times A_{k_2,m_2}\times M_{m_2,m_1}(\Q)$
such that $D\R^{m_1+m_2'}=V$,
where $D=D(B_1,B_2,\alpha)$ is the matrix in the left-hand side of \eqref{B1B2alphabijLEMres}.
Given $V$, we set
$K:=\ker P_1\big|_{V}$;
then there exists a uniquely determined matrix $C\in M_{k_2,k_1}(\Q)$
such that $W:=K^\perp\cap V=\left\{\cmatr{\vecu}{C\vecu}\col\vecu\in P_1V\right\}$.
On the other hand, for any 
$D=D(B_1,B_2,\alpha)$ as above we have
\begin{align}\label{B1B2alphabijLEMpf2}
D\R^{m_1+m_2'}=\left\{\cmatr{B_1\vecx}{B_2\alpha\vecx}\col\vecx\in\R^{m_1}\right\}
\oplus\left\{\cmatr{\bn}{\tB_2\vecy}\col\vecy\in\R^{m_2'}\right\},
\end{align}
and this is in fact an orthogonal direct sum,
and the second term is equal to $\ker P_1\big|_{D\R^{m_1+m_2'}}$.
Hence we conclude that $D\R^{m_1+m_2'}=V$
holds if and only if the two terms in 
\eqref{B1B2alphabijLEMpf2} are equal to $W$ and $K$, respectively,
and this, in turn, holds if and only if
$V_{B_1}=P_1V$, $V_{\tB_2}$ equals the image of $K\subset\{\bn\}\oplus\R^{k_2}$ in $\R^{k_2}$,
and $B_2\alpha=CB_1$.
The first two of these relations are satisfied for unique choices of
$B_1\in A_{k_1,m_1}$ and $B_2\in A_{k_2,m_2}$, respectively,
and then the third relation holds for a unique choice of 
$\alpha\in M_{m_2,m_1}(\Q)$.

It remains to prove \eqref{B1B2alphabijLEMres2}.
When \eqref{B1B2alphabijLEMres} holds,
one verifies by a direct computation that 
\begin{align}\label{B1B2alphabijdetpf1}
J^{-1}\Z^{m_1+m_2'}
=\left\{\cmatr{\vecx}{\vecy}\col \vecx\in\Z^{m_1},\vecy\in\R^{m_2'},\:
B_2\alpha \vecx+\tB_2\vecy\in\Z^{k_2}\right\}.
\end{align}
Let $L$ be the image of $J^{-1}\Z^{m_1+m_2'}$ under the projection
onto the first $m_1$ coordinates.
Using 
$\tB_2\R^{m_2'}=V_{B_2}^\perp=\{\vecw\in \R^{k_2}\col B_2^{\mathsf{T}}\vecw=\bn\}$,
it follows that $L$ equals the set of those $\vecx\in\Z^{m_1}$
for which there exists some $\vecu\in\Z^{k_2}$
satisfying $B_2^{\mathsf{T}} (\vecu-B_2\alpha\vecx)=0$.
In other words,
$L$ is the set of $\vecx\in\Z^{m_1}$
satisfying $\xi\transp\vecx\in B_2^{\mathsf{T}}\Z^{k_2}$, 
with $\xi$ as in \eqref{B1B2alphabijLEMres2}.
Next, using the fact that $B_2$ can be extended to a matrix $B_2'\in\GL_{k_2}(\Z)$
with block decomposition $B_2'=(B_2\hspace{5pt}*)$,
it follows that $B_2^{\mathsf{T}}\Z^{k_2}=\Z^{m_2}$.
Hence $\xi\transp\vecx\in B_2^{\mathsf{T}}\Z^{k_2}$ is equivalent with 
$\xi\transp\vecx\in\Z^{k_2}$,
which in turn is equivalent with 
$q_{\xi}\xi\transp\vecx\equiv\bn\mod q_{\xi}$.
It follows that $L$ is a full-dimensional sub-lattice of $\Z^{m_1}$
of index $q_{\xi}^{m_1}/N_\xi$
(cf.\ \eqref{Nxidef}).
Furthermore, given any $\vecx\in\Z^{m_1}$ and $\vecy_0\in\R^{m_2'}$ such that
$B_2\alpha\vecx+\tB_2\vecy_0\in\Z^{k_2}$,
we have
$\{\vecy\in\R^{m_2'}\col B_2\alpha\vecx+\tB_2\vecy\in\Z^{k_2}\}
=\vecy_0+\Z^{m_2'}$.
Using these facts, it follows that 
\begin{align}\label{detJinvcomp}
\bigl|\det J^{-1}\bigr|=\vol(\R^{m_1+m_2'}/J^{-1}\Z^{m_1+m_2'})
=\vol(\R^{m_1}/L)=q_{\xi}^{m_1}/N_\xi,
\end{align}
and the formula \eqref{B1B2alphabijLEMres2} is proved.
\end{proof}

Recall that we are considering a fixed choice of
$m_1>0$ and $0<m_2<k_2$.
Using Lemma~\ref{B1B2alphabijLEM},
and noticing that when \eqref{B1B2alphabijLEMres} holds then the integral in
\eqref{MTHMNEWpf4} equals
$\int\trho(zJ\transp B_3^{\mathsf{T}})\,dz
=\bigl(N_\xi/q_{\xi}^{m_1}\bigr)^{-n}\int\trho(wB_3^{\mathsf{T}})\,dw$,
it follows that the sum 
over all $B_1\in A_{k_1,m_1}$ and $B_2\in A_{k_2,m_2}$
of the expression in 
\eqref{MTHMNEWpf4} 
equals
\begin{align}\label{MTHMNEWpf1000} 
\sum_{\substack{B\in A_{k,m_1+m_2'}\\(\dim P_1V_B=m_1)}}
\int_{M_{n,m_1+m_2'}(\R)}\trho\bigl(wB^{\mathsf{T}}\bigr)\,dw
\end{align}
By appropriate modifications of the previous argument,
the identity just stated can be shown to hold also in the remaining cases
when $m_1=0$ and/or $m_2\in\{0,k_2\}$.
(In the special case $m_1=0$, $m_2=k_2$,
the expression in \eqref{MTHMNEWpf1000}  should be interpreted to mean $\trho(0)$.)
Finally, adding the expression in \eqref{MTHMNEWpf1000} over
all combinations of
$m_1\in\{0,\ldots,k_1\}$
and $m_2\in\{0,\ldots,k_2\}$
we arrive at the conclusion that the right-hand side of 
\eqref{MAINTHEOREMabs2res} equals
\begin{align*}
\sum_{m=1}^k\sum_{B\in A_{k,m}}\int_{M_{n,m}(\R)}\trho(xB\transp)\,dx
+\trho(0).
\end{align*}
By Rogers' formula, Theorem \ref{ROGERSFORMULATHMp},
this equals \eqref{MTHMNEWpf100},
i.e.\ the left-hand side of \eqref{MAINTHEOREMabs2res},
and Theorem \ref{MAINTHEOREMabs2} is proved.
\hfill$\square$ $\square$ $\square$

\section{Application to the joint limit distribution of 
$\{\mathcal V_j(L)\}_{j=1}^{\infty}$
and $\{\mathcal V_j(L^*)\}_{j=1}^{\infty}$}
\label{applicationSEC}

\subsection{Joint moments of counting functions $N_j(L)$ and $\tN_j(L^*)$}\label{JMsec}
Given any numbers
$0<V_1\leq V_2\leq\cdots\leq V_{k_1}$
and $0<W_1\leq W_2\leq\cdots\leq W_{k_2}$,
for any $L\in X_n$ we denote by $N_j(L)$ the number of non-zero lattice points of $L$
in the open $n$-ball of volume $V_j$ centered at the origin,
and we denote by $\tN_j(L^*)$ the number of non-zero lattice points of $L^*$
in the $n$-ball of volume $W_j$ centered at the origin.
The main step in the proof of Theorem \ref{JOINTPOISSONTHEOREM}
is the following theorem concerning the joint moments of
the counting functions $N_j(L)$ and $\tN_j(L^*)$,
for $L$ random in $(X_n,\mu)$.

For any $k\in\Z_{\geq0}$ we denote by
$\scrP(k)$ the set of partitions of the set $\{1,\ldots,k\}$.
Thus, for example, $\scrP(1)=\{\{\{1\}\}\}$
and $\scrP(2)=\{\{\{1\},\{2\}\},\{\{1,2\}\}\}$,
while we agree that $\scrP(0)=\{\emptyset\}$.
For any non-empty subset $B\subset\Z^+$ we write $\mathsf{m}_B:=\min_{j\in B}j$.
\begin{thm}\label{MOMENTTHM}
Let $k_1,k_2\in\Z_{\geq0}$ and fix numbers
$0<V_1\leq V_2\leq\cdots\leq V_{k_1}$
and $0<W_1\leq W_2\leq\cdots\leq W_{k_2}$.
Let $L$ be a random in $X_n$ with respect to $\mu$.
Then
\begin{align}\label{MOMENTTHMres}
\EE\biggl(\prod_{j=1}^{k_1}N_j(L)\prod_{j=1}^{k_2}\tN_j(L^*)\biggr)
\to \biggl(\sum_{P\in\scrP(k_1)}2^{k_1-\#P}\prod_{B\in P}V_{\mathsf{m}_B}\biggr)
\biggl(\sum_{P\in\scrP(k_2)}2^{k_2-\#P}\prod_{B\in P}W_{\mathsf{m}_B}\biggr)
\end{align}
as $n\to\infty$.
\end{thm}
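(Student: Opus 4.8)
The plan is to evaluate the left--hand side of \eqref{MOMENTTHMres} using Theorem~\ref{MAINTHEOREMabs2} and then to analyse the resulting finite--type sum as $n\to\infty$. Write $\phi_V(\vecu):=I\bigl(0<\fV_n|\vecu|^n<V\bigr)$ for $\vecu\in\R^n$ and $V>0$; this is a bounded Borel function with bounded support that vanishes at $\bn$, and $N_j(L)=\sum_{\vecv\in L}\phi_{V_j}(\vecv)$, $\tN_j(L^*)=\sum_{\vecw\in L^*}\phi_{W_j}(\vecw)$. With $\rho(\vecv_1,\dots,\vecv_{k_1},\vecw_1,\dots,\vecw_{k_2}):=\prod_{i=1}^{k_1}\phi_{V_i}(\vecv_i)\prod_{j=1}^{k_2}\phi_{W_j}(\vecw_j)$ the left--hand side of \eqref{MOMENTTHMres} is $\int_{X_n}\sum_{x\in L^{k_1}}\sum_{y\in(L^*)^{k_2}}\rho(x,y)\,d\mu(L)$, which for $n>k_1+k_2$ equals the right--hand side of \eqref{MAINTHEOREMabs2res}. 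Since $\rho$ vanishes whenever one of its arguments is $\bn$, all terms involving an empty $B_1$ or $B_2$ drop out (cf.\ Remark~\ref{emptymatricesREM}), leaving $\sum_{m_1=1}^{k_1}\sum_{B_1\in A_{k_1,m_1}}\sum_{m_2=1}^{k_2}\sum_{B_2\in A_{k_2,m_2}}T_n(B_1,B_2)$, where $T_n(B_1,B_2):=\sum_{\beta\in M_{m_1,m_2}(\Z)}W(\beta)\int_{S(\beta)}\rho\bigl(xB_1^{\mathsf T},yB_2^{\mathsf T}\bigr)\,d\eta_\beta(x,y)$. (For $k_1=0$ or $k_2=0$ the theorem is the single--lattice statement of \cite{aS2011o} --- applied to $L$, resp.\ $L^*$, using that $L\mapsto L^*$ preserves $\mu$; see Remark~\ref{DUALsymmrem} --- or is trivial, so I assume $k_1,k_2\geq1$.)

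Next I would set up a dichotomy on the matrices $B\in M_{k,m}(\Z)^*$. Call $B$ \emph{basic} if each of its rows is $\pm$ a standard basis vector of $\R^m$, normalised so that among the rows equal to $\pm$ a fixed basis vector the one of least index carries sign $+$. Basic matrices modulo right multiplication by signed permutation matrices correspond bijectively to pairs $(P,\varepsilon)$, where $P$ is a partition of $\{1,\dots,k\}$ into $m=\#P$ blocks and $\varepsilon$ records the sign of each non--minimal element of each block relative to the minimal one; thus there are exactly $2^{k-\#P}$ such orbits over a fixed partition $P$. As the right--hand side of \eqref{MAINTHEOREMabs2res} is independent of the choice of $A_{k_1,m_1},A_{k_2,m_2}$, I may take these sets to contain a basic representative from each orbit that contains one. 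The first step is then to show that the combined contribution of all pairs $(B_1,B_2)$ in which $B_1$ or $B_2$ is not basic tends to $0$ as $n\to\infty$. Using $S(\beta)_x'=x(x^{\mathsf T}x)^{-1}\beta+(x^\perp)^{m_2}$, the inner integral over $S(\beta)_x$ becomes a product of Euclidean ball--volumes in $x^\perp\cong\R^{\,n-m_1}$, which turns $T_n(B_1,B_2)$ into a Rogers--type integral in $n$ variables (for the $x$--columns, carrying the factors built from $B_1$) times a family of Rogers--type integrals in $n-m_1$ variables (for the $y$--columns, built from $B_2$). A row of $B_1$ or $B_2$ not equal to $\pm$ a basis vector imposes an extra linear relation among the integration vectors, costing a volume factor exponentially small in $n$; summing the ensuing geometric series over $\beta$ and over $B_1,B_2$ then shows the non--basic contribution is $o(1)$. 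The $\beta$--summation --- which does not appear in \cite{aS2011o} --- causes no trouble here: $W(\beta)$ is bounded uniformly in $\beta$, in fact $\sup_\beta|W(\beta)-1|\to0$ by the trivial bounds $\prod_{j=n-m_1+1}^{n}\zeta(j)^{-1}\le W(\beta)\le\prod_{j=n-m_1+1}^{n}\zeta(j)^{-1}\prod_{j=n-m_1-m_2+1}^{n-m_2}\zeta(j)$, and only $O\bigl(n^{m_1m_2}\bigr)$ values of $\beta$ give a non--zero integrand.

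The heart of the argument is then to evaluate $\lim_{n\to\infty}T_n(B_1,B_2)$ for basic $B_1$ (underlying partition $P$ of $\{1,\dots,k_1\}$, $m_1=\#P$) and basic $B_2$ (partition $Q$ of $\{1,\dots,k_2\}$, $m_2=\#Q$). Set $\widehat V_l:=V_{\mathsf{m}_{P_l}}$, $\widehat W_{l'}:=W_{\mathsf{m}_{Q_{l'}}}$ and $R_{l'}:=(\widehat W_{l'}/\fV_n)^{1/n}$. As $\phi_V(\vecu)$ depends only on $|\vecu|$, $\rho\bigl(xB_1^{\mathsf T},yB_2^{\mathsf T}\bigr)=\prod_{l}\phi_{\widehat V_l}(\vecx_l)\prod_{l'}\phi_{\widehat W_{l'}}(\vecy_{l'})$ with $\vecx_l,\vecy_{l'}$ the columns of $x,y$. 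Writing $\beta=(b_1\ \cdots\ b_{m_2})$ in terms of its columns $b_{l'}\in\Z^{m_1}$ and splitting each $\vecy_{l'}$ into its component $x(x^{\mathsf T}x)^{-1}b_{l'}$ in the column span of $x$ and its component in $x^\perp$ (so $|\vecy_{l'}|^2$ is the sum of the two squared lengths), carrying out the inner integral of \eqref{NUbetaDEFnew} gives the exact identity
\begin{align*}
T_n(B_1,B_2)=\fV_{n-m_1}^{m_2}\int_{U_{m_1}}\prod_{l=1}^{m_1}\phi_{\widehat V_l}(\vecx_l)\biggl(\sum_{\beta\in M_{m_1,m_2}(\Z)}W(\beta)\prod_{l'=1}^{m_2}\bigl(R_{l'}^{2}-b_{l'}^{\mathsf T}(x^{\mathsf T}x)^{-1}b_{l'}\bigr)_+^{(n-m_1)/2}\biggr)\frac{dx}{\dd(x)^{m_2}}.
\end{align*}
I would then replace $W(\beta)$ by $1$ (an error of $1+o(1)$), use $(1-s)_+^{(n-m_1)/2}=(1+o(1))\exp\bigl(-\tfrac{n-m_1}{2}s\bigr)$ on the range $s=O(\log n/n)$ carrying the mass, and recognise the $\beta$--sum as a Riemann sum of mesh $\to 0$ for $\prod_{l'}R_{l'}^{\,n-m_1}\int_{M_{m_1,m_2}(\R)}\exp\bigl(-\tfrac{n-m_1}{2}\sum_{l'}R_{l'}^{-2}c_{l'}^{\mathsf T}(x^{\mathsf T}x)^{-1}c_{l'}\bigr)\,dc$, $c_{l'}$ the columns of $c$. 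This Gaussian integral equals $(2\pi)^{m_1m_2/2}(n-m_1)^{-m_1m_2/2}\dd(x)^{m_2}\prod_{l'}R_{l'}^{\,m_1}$, whose factor $\dd(x)^{m_2}$ cancels the weight $\dd(x)^{-m_2}$; since $\int_{U_{m_1}}\prod_l\phi_{\widehat V_l}(\vecx_l)\,dx=\prod_l\widehat V_l$ and $\prod_{l'}R_{l'}^{\,n}=\fV_n^{-m_2}\prod_{l'}\widehat W_{l'}$, this yields $T_n(B_1,B_2)=(1+o(1))\,\fV_{n-m_1}^{m_2}\fV_n^{-m_2}(2\pi)^{m_1m_2/2}(n-m_1)^{-m_1m_2/2}\prod_l\widehat V_l\prod_{l'}\widehat W_{l'}$, and Stirling's formula (which gives $\fV_{n-m_1}/\fV_n=\pi^{-m_1/2}\Gamma(\tfrac n2+1)/\Gamma(\tfrac{n-m_1}{2}+1)\sim\bigl(\tfrac{n-m_1}{2\pi}\bigr)^{m_1/2}$) makes the remaining $n$--dependent factor tend to $1$. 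Hence $\lim_n T_n(B_1,B_2)=\prod_l V_{\mathsf{m}_{P_l}}\prod_{l'}W_{\mathsf{m}_{Q_{l'}}}$ --- the value one gets by treating $L$ and $L^*$ as independent, since by \eqref{MAINTHMpf9}, Lemma~\ref{PLmWmbijectionLEM} and Theorem~\ref{ROGERSbasicformulaTHM} one has $T_n(B_1,B_2)=\int_{X_n}\bigl(\sum_{x\in L^{m_1}\cap U_{m_1}}\prod_l\phi_{\widehat V_l}(\vecx_l)\bigr)\bigl(\sum_{y\in(L^*)^{m_2}\cap U_{m_2}}\prod_{l'}\phi_{\widehat W_{l'}}(\vecy_{l'})\bigr)d\mu(L)$ with the two Siegel--type integrals of mean $\prod_l\widehat V_l$ and $\prod_{l'}\widehat W_{l'}$.

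Finally, summing $\prod_l V_{\mathsf{m}_{P_l}}\prod_{l'}W_{\mathsf{m}_{Q_{l'}}}$ over the $2^{k_1-\#P}$ basic $B_1$ with underlying partition $P$ and the $2^{k_2-\#Q}$ basic $B_2$ with underlying partition $Q$, and then over $P\in\scrP(k_1)$, $Q\in\scrP(k_2)$, gives precisely the right--hand side of \eqref{MOMENTTHMres}. I expect the main obstacle to be making the Riemann--sum/Gaussian step of the previous paragraph rigorous \emph{uniformly in $x$}: near the boundary of $U_{m_1}$ the Gram matrix $x^{\mathsf T}x$ degenerates, the pointwise asymptotics of the $\beta$--sum break down, and $\dd(x)^{-m_2}$ is unbounded, so one must truncate $x$ to a well--conditioned region and bound the remainder --- using that $\int_{U_{m_1}}\prod_l\phi_{\widehat V_l}(\vecx_l)\,\dd(x)^{-m_2}\,dx$ is finite and of controlled size, via the scaling argument and \cite[Lemma~5.2]{aSaS2016} already used for Lemma~\ref{etabetaasymptLEM} --- or else propagate an $x$--dependent error throughout; and the non--basic estimate, though parallel to that in \cite{aS2011o}, needs similar extra care because of the $\beta$--summation.
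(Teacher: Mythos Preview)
Your proposal is correct and follows the same overall architecture as the paper: apply Theorem~\ref{MAINTHEOREMabs2}, split into ``basic'' versus ``non-basic'' $(B_1,B_2)$, show the non-basic part is $o(1)$ by reducing to the bounds of \cite{aS2011o} (with the extra $\beta$-sum handled via the uniform bound on $W(\beta)$ and the observation that only polynomially many $\beta$ contribute), and evaluate the basic part explicitly. Your identification of the uniformity-in-$x$ issue as the main technical obstacle is exactly right.

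The one genuine methodological difference is in the main-term evaluation. You propose to approximate $(R_{l'}^2-s)_+^{(n-m_1)/2}$ by a Gaussian and then treat the $\beta$-sum as a Riemann sum for the resulting Gaussian integral, recovering the answer via Stirling. The paper instead compares the $\beta$-sum \emph{directly} to the integral $\int_{\R^{m_1}}f_j(\|b_x\vecbeta\|)\,d\vecbeta$, which it evaluates \emph{exactly} as $\dd(x)\,W_{\nu_{j,2}}$ (the volume of an $n$-ball, sliced); see \eqref{MAINTERM7}. This collapses your two approximations (function-to-Gaussian, sum-to-integral) into one, and the error is controlled by a single derivative bound on $f_j$ (Lemma~\ref{MAINTERMLEM1}), giving an explicit error $\ll W_{\nu_{j,2}}\dd(x)\ell_x$ in terms of the parameter $\ell_x=\sup_{\vecu\in[-\frac12,\frac12]^{m_1}}\|b_x\vecu\|$. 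The region $\ell_x>c$ where this fails is then handled by a crude bound (Lemma~\ref{MAINTERMLEM2}) together with the geometric fact that $\ell_x>c$ forces $\dd(x)\ll c^{-1}n^{(m_1-1)/2}$ (Lemma~\ref{MAINTERMLEM3}), which makes the $\dd(x)^{n-m_1-m_2}$ factor coming from \cite[Lemma~5.2]{aSaS2016} exponentially small. Your Gaussian route would work too, but the paper's direct route is cleaner precisely because it avoids introducing a second approximation whose error must also be made uniform in $x$.
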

\begin{remark}\label{k1ork2zeroREM}
In \eqref{MOMENTTHMres}, it should be noted that
an empty product equals $1$ by convention;
hence (using also $\scrP(0)=\{\emptyset\}$),
for $k_2=0$, %
\eqref{MOMENTTHMres} says
\begin{align}\label{MOMENTTHMresk2eq0}
\EE\biggl(\prod_{j=1}^{k_1}N_j(L)\biggr)
\to \sum_{P\in\scrP(k_1)}2^{k_1-\#P}\prod_{B\in P}V_{\mathsf{m}_B}.
\end{align}
This limit relation is already known from
\cite[Thm.\ 3, eq.\ (10) and Lemma 3]{aS2011o}.
Of course, by the $L\leftrightarrow L^*$ symmetry (see Remark \ref{DUALsymmrem}),
also the case $k_1=0$ is covered by the same reference.
\end{remark}

We now embark on the proof of Theorem \ref{MOMENTTHM}.
Because of Remark \ref{k1ork2zeroREM}, we may
from start assume that 
both $k_1$ and $k_2$ are positive.
In the following we will work with an arbitrary, fixed, 
dimension $n>k_1+k_2$.
Let $\rho_j$, $1\leq j\leq k_1$,
be the characteristic function of the open $n$-ball of volume $V_j$
centered at the origin, with the origin removed.
Then
\begin{align*}
N_j(L)=\sum_{\vecv\in L}\rho_j(\vecv).
\end{align*}
Similarly let $\trho_j$, $1\leq j\leq k_2$,
be the characteristic function of the open $n$-ball of volume $W_j$
centered at the origin, with the origin removed.
Then
\begin{align*}
\tN_j(L)=\sum_{\vecv\in L^*}\trho_j(\vecv).
\end{align*}
Also define $\rho:(\R^n)^{k_1}\times(\R^n)^{k_2}\to\R_{\geq0}$ through
\begin{align}\label{rhoCHOICE}
\rho(\vecv_1,\ldots,\vecv_{k_1},\vecw_1,\ldots,\vecw_{k_2})
:=\prod_{j=1}^{k_1}\rho_j(\vecv_j)\prod_{j=1}^{k_2}\trho_j(\vecw_j).
\end{align}
Then by construction,
\begin{align*}
\EE\biggl(\prod_{j=1}^{k_1}N_j(L)\prod_{j=1}^{k_2}\tN_j(L^*)\biggr)
=\int_{X_n}
\sum_{\vecv_1,\ldots,\vecv_{k_1}\in L}\sum_{\vecw_1,\ldots,\vecw_{k_2}\in L^*}
\rho(\vecv_1,\ldots,\vecv_{k_1},\vecw_1,\ldots,\vecw_{k_2})\,d\mu(L).
\end{align*}
By Theorem \ref{MAINTHEOREMabs2}, this equals
\begin{align}\label{MAINTHEOREMabs2resREP}
\sum_{m_1=1}^{k_1}\sum_{B_1\in A_{k_1,m_1}}\sum_{m_2=1}^{k_2}\sum_{B_2\in A_{k_2,m_2}}
\sum_{\beta\in M_{m_1,m_2}\!(\Z)} W(\beta)
\int_{S(\beta)}\rho\bigl(xB_1^{\mathsf{T}},yB_2^{\mathsf{T}}\bigr)\,d\eta_\beta(x,y).
\end{align}
Indeed, note that all the terms in the last line of \eqref{MAINTHEOREMabs2res}
vanish, since by definition we have $\rho_j(\bn)=0$ and $\trho_j(\bn)=0$ for all $j$.

\subsection{The main contribution}
\label{MAINTERMsec}

We will start by considering the sums over $B_1$ and $B_2$ in
\eqref{MAINTHEOREMabs2resREP}
restricted to certain subsets of $A_{k_1,m_1}$
and $A_{k_2,m_2}$, respectively;
it will turn out that these restricted sums give the main term contribution
as $n\to\infty$.

For any $k\geq m$, we let $\scrM_{k,m}$ be the set of all $k\times m$ matrices
which have all entries in $\{-1,0,1\}$,
exactly one non-zero entry in each row, and at least one non-zero entry in each column.
One verifies that $\scrM_{k,m}\subset M_{k,m}(\Z)^*$.
Let $\scrM_{k,m}'$ be the subset of those matrices 
$B=(b_{ij})\in\scrM_{k,m}$ which satisfy the following condition:
There exist some $1=\nu_1<\nu_2<\cdots<\nu_m\leq k$
such that for each $j\in\{1,\ldots,m\}$ we have
$b_{\nu_j,j}=1$ and $b_{ij}=0$ for all $i<\nu_j$.
Clearly the indices $\nu_1,\ldots,\nu_m$ are uniquely determined
for every $B\in\scrM_{k,m}'$, 
and we will denote these by $\nu_1(B),\ldots,\nu_m(B)$.

Let $\fS_m\subset\GL_m(\Z)$ be the group of $m\times m$ signed permutation matrices,
i.e.\ matrices which have all entries in $\{-1,0,1\}$ and exactly one non-zero entry in each row
and in each column.
One verifies that $\fS_m$ acts freely on $\scrM_{k,m}$
by multiplication from the right;
in particular each orbit for this action has size $\#\fS_m=2^mm!$.
Furthermore, whenever the relation
$B_1\gamma=B_2$ holds for some
$B_1,B_2\in\scrM_{k,m}$ and $\gamma\in\GL_m(\Z)$,
$\gamma$ must in fact belong to $\fS_m$.
Using these observations, one easily verifies that
$\scrM_{k,m}'$ contains exactly one representative from every
$\GL_m(\Z)$-orbit in $M_{k,m}(\Z)^*$ 
which intersects $\scrM_{k,m}$.
Hence we may, without loss of generality, from now on assume that 
the set of representatives $A_{k,m}$ has been chosen in such a way that
\begin{align*}
\scrM_{k,m}'\subset A_{k,m}.
\end{align*}

Let us now consider the contribution 
in \eqref{MAINTHEOREMabs2resREP}
from a fixed choice of
$m_1\in\{1,\ldots,k_1\}$,
$B_1\in\scrM_{k_1,m_1}'$,
$m_2\in\{1,\ldots,k_2\}$,
$B_2\in\scrM_{k_2,m_2}'$.
That is, we consider the following sum:
\begin{align}\label{MAINTERM1pre}
\sum_{\beta\in M_{m_1,m_2}\!(\Z)} W(\beta)
\int_{S(\beta)}\rho\bigl(xB_1^{\mathsf{T}},yB_2^{\mathsf{T}}\bigr)\,d\eta_\beta(x,y).
\end{align}
It is immediate from the definition in \eqref{Wdef} that,
for any $\beta\in M_{m_1,m_2}(\Z)$,
\begin{align}\label{Wbetatrivbounds}
\frac1{\zeta(n)\zeta(n-1)\cdots\zeta(n-m_1+1)}\leq W(\beta)\leq 
\frac{\zeta(n-m_2)\cdots\zeta(n-m_2-m_1+1)}{\zeta(n)\zeta(n-1)\cdots\zeta(n-m_1+1)},
\end{align}
with the second relation being an equality when $\beta=0$.
In particular we have
\begin{align}\label{Wbetatrivbounds2}
W(\beta)=1+O_m(2^{-n}),
\end{align}
where 
\begin{align*}
m:=m_1+m_2.
\end{align*}
Since $\rho\geq0$, it follows that \eqref{MAINTERM1pre}
equals
\begin{align}\label{MAINTERM1}
\bigl(1+O_m(2^{-n})\bigr)\sum_{\beta\in M_{m_1,m_2}\!(\Z)}
\int_{S(\beta)}\rho\bigl(xB_1^{\mathsf{T}},yB_2^{\mathsf{T}}\bigr)\,d\eta_\beta(x,y).
\end{align}
Let us here write $\rho$ as
\begin{align}\label{rhoCHOICEvar}
\rho(\vecv_1,\ldots,\vecv_{k_1},\vecw_1,\ldots,\vecw_{k_2})
=\rho'(\vecv_1,\ldots,\vecv_{k_1})\trho'(\vecw_1,\ldots,\vecw_{k_2})
\end{align}
with
\begin{align}\label{rhoptrhopDEF}
\rho'(\vecv_1,\ldots,\vecv_{k_1})
:=\prod_{j=1}^{k_1}\rho_j(\vecv_j)
\quad\text{and}\quad
\trho'(\vecw_1,\ldots,\vecw_{k_2})
:=\prod_{j=1}^{k_2}\trho_j(\vecw_j)
\end{align}
(cf.\ \eqref{rhoCHOICE}).
Then \eqref{MAINTERM1} can be expressed as follows, 
using also \eqref{NUbetaDEFnew}:
\begin{align}\notag
\bigl(1+O_m(2^{-n})\bigr)\sum_{\beta\in M_{m_1,m_2}\!(\Z)}
\int_{U_{m_1}}\int_{S(\beta)_x}\rho'\bigl(xB_1^{\mathsf{T}}\bigr)
\trho'\bigl(yB_2^{\mathsf{T}}\bigr)\,d\eta_{\beta,x}(y)\,\frac{dx}{\dd(x)^{m_2}}
\hspace{70pt}
\\\label{MAINTERM2}
=\bigl(1+O_m(2^{-n})\bigr)\int_{U_{m_1}}\rho'\bigl(xB_1^{\mathsf{T}}\bigr)
\sum_{\beta\in M_{m_1,m_2}\!(\Z)}
\int_{S(\beta)_x}
\trho'\bigl(yB_2^{\mathsf{T}}\bigr)\,d\eta_{\beta,x}(y)\,\frac{dx}{\dd(x)^{m_2}}.
\end{align}

Let the column vectors of $x$ be $\vecx_1,\ldots,\vecx_{m_1}$,
and let the column vectors of $y$
be $\vecy_1,\ldots,\vecy_{m_2}$.
Set $\nu_{j,1}:=\nu_j(B_1)$ and $\nu_{j,2}:=\nu_j(B_2)$.
It then follows from our definitions,
and in particular the assumption that
$V_1\leq\cdots\leq V_{k_1}$
and $W_1\leq\cdots\leq W_{k_2}$,
that
\begin{align}\label{MAINTERM3}
\rho'\bigl(xB_1^{\mathsf{T}}\bigr)=\prod_{j=1}^{m_1}\rho_{\nu_{j,1}}\bigl(\vecx_j\bigr)
\qquad
\text{and}
\qquad
\trho'\bigl(yB_2^{\mathsf{T}}\bigr)=\prod_{j=1}^{m_2}\trho_{\nu_{j,2}}\bigl(\vecy_j\bigr).
\end{align}
For $x\in U_{m_1}$ 
we set 
\begin{align}\label{bxDEF}
b_x:=x(x\transp x)^{-1}\in U_{m_1}.
\end{align}
Then for any $y\in M_{n,m_2}(\R)$
we note that $y\in S(\beta)_x'$
holds if and only if
$x\transp y=\beta=x\transp b_x\beta$,
viz.,
if and only if $y-b_x\beta\in(x^\perp)^{m_2}$,
where we recall that $x^\perp$ denotes the orthogonal complement in $\R^n$ of 
the column span of $x$.
In other words, $y\in S(\beta)_x'$ holds if and only if
$\vecy_j\in b_x\vecbeta_j+x^\perp$ 
for all $j\in\{1,\ldots,m_2\}$,
where $\vecbeta_1,\ldots,\vecbeta_{m_2}$ are the column vectors of $\beta$.
Hence
\begin{align}\label{MAINTERM6}
\int_{S(\beta)_x}
\trho'\bigl(yB_2^{\mathsf{T}}\bigr)\,d\eta_{\beta,x}(y)
=\prod_{j=1}^{m_2}\int_{b_x\vecbeta_j+x^\perp}\trho_{\nu_{j,2}}\bigl(\vecy_j\bigr)\,d\vecy_j,
\end{align}
where $d\vecy_j$ denotes the $(n-m_1)$-dimensional Lebesgue measure on the affine subspace
$b_x\vecbeta_j+x^\perp$.
But $\trho_{\nu_{j,2}}$ is the characteristic function of the open $n$-ball of radius
$R_j:=(W_{\nu_{j,2}}/\fV_n)^{1/n}$
centered at the origin, with the origin removed.
Hence the expression in \eqref{MAINTERM6}
equals $\prod_{j=1}^{m_2}f_j\bigl(\|b_x\vecbeta_j\|\bigr)$
where $f_j:\R_{\geq0}\to\R_{\geq0}$ is given by
\begin{align*}
f_j(r)=\begin{cases}
\fV_{n-m_1}\cdot(R_j^2-r^2)^{(n-m_1)/2}&\text{if }\: r\leq R_j
\\
0&\text{if }\: r\geq R_j.
\end{cases}
\end{align*}
It follows that, for every $x\in U_{m_1}$,
\begin{align}\label{MAINTERM5}
\sum_{\beta\in M_{m_1,m_2}\!(\Z)}
\int_{S(\beta)_x}
\trho'\bigl(yB_2^{\mathsf{T}}\bigr)\,d\eta_{\beta,x}(y)
=
\sum_{\beta\in M_{m_1,m_2}\!(\Z)}
\prod_{j=1}^{m_2}f_j\bigl(\|b_x\vecbeta_j\|\bigr)
=\prod_{j=1}^{m_2}\sum_{\vecbeta_j\in\Z^{m_1}}f_j\bigl(\|b_x\vecbeta_j\|\bigr).
\end{align}
In the last product, we will approximate each sum
$\sum_{\vecbeta_j\in\Z^{m_1}}f_j\bigl(\|b_x\vecbeta_j\|\bigr)$
by the corresponding \textit{integral},
\begin{align}\label{MAINTERM7}
\int_{\vecbeta\in\R^{m_1}}f_j\bigl(\|b_x\vecbeta\|\bigr)\,d\vecbeta
=\dd(b_x)^{-1}\int_{\vecz\in\R^{m_1}}f_j\bigl(\|\vecz\|\bigr)\,d\vecz
=\dd(b_x)^{-1}\fV_nR_j^n
=\dd(x)W_{\nu_{j,2}},
\end{align}
where in the last equality we used the fact that 
$\dd(b_x)^{-1}=\dd(x)$.
The following lemma gives a bound on the error in this approximation.
From now on we will write
$C:=\bigl[-\tfrac12,\tfrac12\bigr]^{m_1}$;
a closed unit cube in $\R^{m_1}$.
We also define:
\begin{align*}
\ell_x:=\sup_{\vecu\in C}\|b_x\vecu\|.
\end{align*}
\begin{lem}\label{MAINTERMLEM1}
Fix $\ve>0$ and $j\in\{1,\ldots,m_2\}$.
Assume that $n$ is so large that $n\geq m_1+3$
and $\ve^n\leq W_{\nu_{j,2}}\leq\ve^{-n}$.
Then for every $x\in U_{m_1}$ satisfying $\ell_x\leq1$, we have
\begin{align}\label{MAINTERMLEM1res}
\biggl|\sum_{\vecbeta\in\Z^{m_1}}
f_j\bigl(\|b_x\vecbeta\|\bigr)
-\dd(x)W_{\nu_{j,2}}\biggr|
\ll_{m_1,\ve}
W_{\nu_{j,2}}\dd(x)\ell_x.
\end{align}
\end{lem}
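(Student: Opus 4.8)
The plan is to turn the weighted lattice sum on the left of \eqref{MAINTERMLEM1res} into a one‑parameter family of honest lattice‑point counts in balls, and then estimate each count by its volume with an error of boundary (codimension‑one) type. Recall from the computation leading to \eqref{MAINTERM6}--\eqref{MAINTERM7} that $f_j(\|b_x\vecbeta\|)=\vol_{n-m_1}\bigl((b_x\vecbeta+x^\perp)\cap\scrB_{R_j}^n\bigr)=\int_{x^\perp}\chi_{\scrB_{R_j}^n}(b_x\vecbeta+\vecw)\,d\vecw$, where $\int_{x^\perp}$ is taken against the $(n-m_1)$‑dimensional Lebesgue measure on $x^\perp$. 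Write $V:=x\R^{m_1}$, so that $b_x\R^{m_1}=V$, $V\perp x^\perp$, and $\Lambda:=b_x\Z^{m_1}$ is a full lattice in the Euclidean space $V$ with covolume $\dd(b_x)=\dd(x)^{-1}$; for $\vecw\in x^\perp$ with $\|\vecw\|<R_j$ set $\rho(\vecw):=(R_j^{\,2}-\|\vecw\|^2)^{1/2}$. Since $b_x\vecbeta\perp\vecw$, the condition $\|b_x\vecbeta+\vecw\|<R_j$ is the same as $\|b_x\vecbeta\|<\rho(\vecw)$, so by Tonelli (and the fact that there is no contribution when $\|\vecw\|\geq R_j$)
\begin{align*}
\sum_{\vecbeta\in\Z^{m_1}}f_j(\|b_x\vecbeta\|)=\int_{\vecw\in x^\perp,\,\|\vecw\|<R_j}\#\bigl(\Lambda\cap\scrB^V_{\rho(\vecw)}\bigr)\,d\vecw,
\end{align*}
where $\scrB^V_\rho$ denotes the open $\rho$‑ball about $\bn$ in $V$. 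By the same manipulation, without the integer constraint, $\dd(x)W_{\nu_{j,2}}=\int_{\R^{m_1}}f_j(\|b_x\vecbeta\|)\,d\vecbeta=\int_{\vecw\in x^\perp,\,\|\vecw\|<R_j}\dd(x)\,\vol_V\bigl(\scrB^V_{\rho(\vecw)}\bigr)\,d\vecw$ (this is \eqref{MAINTERM7} after interchange). Hence it suffices to bound $\bigl|\#(\Lambda\cap\scrB^V_\rho)-\dd(x)\vol_V(\scrB^V_\rho)\bigr|$ uniformly in $\rho>0$ and integrate over $\vecw$.

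For the lattice‑point discrepancy I would use the elementary tiling estimate. The parallelotope $\mathcal F:=b_xC$, with $C=[-\tfrac12,\tfrac12]^{m_1}$, is a fundamental domain for $\Lambda$ in $V$, with $\vol_V(\mathcal F)=\dd(x)^{-1}$ and with $\|\vecf\|\leq\ell_x$ for all $\vecf\in\mathcal F$. Comparing $\#(\Lambda\cap\scrB^V_\rho)\cdot\vol_V(\mathcal F)=\sum_{\vecl\in\Lambda}\vol_V(\vecl+\mathcal F)\,\chi(\vecl\in\scrB^V_\rho)$ with $\vol_V(\scrB^V_\rho)=\sum_{\vecl\in\Lambda}\vol_V\bigl((\vecl+\mathcal F)\cap\scrB^V_\rho\bigr)$, a summand of the difference is nonzero only if $\vecl+\mathcal F$ contains points both inside and outside $\scrB^V_\rho$, which (as $\|\vecf\|\leq\ell_x$ on $\mathcal F$) forces $\bigl|\,\|\vecl\|-\rho\,\bigr|\leq\ell_x$; such a summand is at most $\vol_V(\mathcal F)$ in absolute value, and the pairwise disjoint cells $\vecl+\mathcal F$ of these $\vecl$ all lie in the annulus $\{\vecv\in V\col|\,\|\vecv\|-\rho\,|<2\ell_x\}$. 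This yields
\begin{align*}
\bigl|\#(\Lambda\cap\scrB^V_\rho)-\dd(x)\vol_V(\scrB^V_\rho)\bigr|
&\leq\dd(x)\,\fV_{m_1}\bigl((\rho+2\ell_x)^{m_1}-(\rho-2\ell_x)_+^{m_1}\bigr)\\
&\ll_{m_1}\dd(x)\,\ell_x\bigl(\rho^{m_1-1}+\ell_x^{m_1-1}\bigr),
\end{align*}
the last step by the mean value theorem when $\rho\geq2\ell_x$ and trivially otherwise.

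Integrating this over $\vecw$ with $\rho=\rho(\vecw)$, and using $\ell_x\leq1$, the left‑hand side of \eqref{MAINTERMLEM1res} is $\ll_{m_1}\dd(x)\,\ell_x\bigl(\int_{\|\vecw\|<R_j}\rho(\vecw)^{m_1-1}\,d\vecw+\fV_{n-m_1}R_j^{\,n-m_1}\bigr)$. Passing to polar coordinates on $x^\perp\cong\R^{n-m_1}$ turns $\int_{\|\vecw\|<R_j}\rho(\vecw)^{s}\,d\vecw$ into a Beta integral; comparing the cases $s=m_1-1$ and $s=m_1$ and recalling $\int_{\|\vecw\|<R_j}\rho(\vecw)^{m_1}\,d\vecw=\fV_nR_j^{\,n}/\fV_{m_1}=W_{\nu_{j,2}}/\fV_{m_1}$ (which is \eqref{MAINTERM7} in disguise), one obtains, via Stirling's formula in the forms $\Gamma(\tfrac n2+1)/\Gamma(\tfrac{n+1}{2})\asymp n^{1/2}$ and $\fV_{n-m_1}/\fV_n\asymp_{m_1}n^{m_1/2}$,
\begin{align*}
\int_{\|\vecw\|<R_j}\rho(\vecw)^{m_1-1}\,d\vecw&\ll_{m_1}\frac{n^{1/2}}{R_j}\,W_{\nu_{j,2}},\\
\fV_{n-m_1}R_j^{\,n-m_1}&\ll_{m_1}\Bigl(\frac{n^{1/2}}{R_j}\Bigr)^{m_1}W_{\nu_{j,2}}.
\end{align*}
Finally, $R_j=(W_{\nu_{j,2}}/\fV_n)^{1/n}\geq\ve\,\fV_n^{-1/n}$, and $\fV_n^{-1/n}=\Gamma(\tfrac n2+1)^{1/n}/\sqrt\pi\geq\sqrt{n/(2\pi e)}$ since $\Gamma(\tfrac n2+1)\geq\bigl(\tfrac{n}{2e}\bigr)^{n/2}$; hence $n^{1/2}/R_j\leq\sqrt{2\pi e}/\ve\ll_\ve1$, and substituting this above gives the asserted bound $\ll_{m_1,\ve}W_{\nu_{j,2}}\dd(x)\ell_x$.

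I expect the last step to be the crux. A naive pointwise modulus‑of‑continuity bound for $f_j$ would be hopelessly lossy, so it is essential to integrate along $x^\perp$ \emph{before} estimating, so that the weighted sum becomes a genuine lattice‑point count in a ball whose error is of surface type. Granting that, the argument reduces to beating the factor $n^{1/2}$ (which arises both from the ``surface'' Beta integral and from $\fV_{n-m_1}/\fV_n$) against $R_j$, and this works precisely because the normalization $R_j^{\,n}=W_{\nu_{j,2}}/\fV_n$ together with the lower bound $W_{\nu_{j,2}}\geq\ve^n$ forces $R_j\gg_\ve n^{1/2}$ (only this lower bound, not the upper bound $W_{\nu_{j,2}}\leq\ve^{-n}$, is needed here). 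The hypothesis $n\geq m_1+3$ serves only to keep the Beta‑function arguments away from degenerate values, equivalently to ensure that $f_j$ is $C^1$ and the integrals involved converge.
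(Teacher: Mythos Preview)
Your proof is correct and takes a genuinely different route from the paper's. The paper works directly with the one-variable profile $f_j$: it tessellates $\R^{m_1}$ by translates of $C$, applies the mean value theorem to bound $|f_j(\|b_x\vecbeta\|)-f_j(\|b_x(\vecbeta+\vecu)\|)|$ (this is where $n\geq m_1+3$ enters, to make $f_j\in C^1$), passes to a sum of local suprema $F_j(r)=\sup_{|s-r|\leq\ell_x}|f_j'(s)|$, converts that sum via summation by parts against the counting function $A(r)=\#\{\vecbeta:\|b_x\vecbeta\|\leq r\}$, and then carries out a fairly delicate analysis using the unimodality of $|f_j'|$ and an explicit Beta-integral computation. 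You instead unfold $f_j$ back to its origin as the volume of a slice of $\scrB_{R_j}^n$, so that the weighted lattice sum becomes an integral over $x^\perp$ of honest lattice-point counts $\#(\Lambda\cap\scrB^V_{\rho(\vecw)})$, to which the elementary Gauss-type tiling estimate applies directly. This is cleaner: it avoids the integration by parts and the monotonicity analysis of $|f_j'|$, and does not actually use the differentiability of $f_j$ (so your closing remark about $n\geq m_1+3$ is slightly misplaced for your own argument---you only need $n>m_1$). Conversely, the paper's approach stays entirely within the $m_1$-dimensional picture and would adapt to weight functions not arising as section-volumes. Your aside that a ``naive modulus-of-continuity bound would be hopelessly lossy'' undersells the paper's method, which is precisely such a bound, executed carefully enough (via the summation by parts and the explicit shape of $|f_j'|$) to succeed; your geometric reformulation simply makes that care unnecessary. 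Your observation that only the lower bound $W_{\nu_{j,2}}\geq\ve^n$ is used is correct, and in fact applies equally to the paper's argument.
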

\begin{proof}
We split the integral in \eqref{MAINTERM7} by tesselating $\R^{m_1}$ 
by translates of the unit cube $C$;
this gives:
\begin{align}\label{MAINTERMLEM1pf1}
\dd(x)W_{\nu_{j,2}}
=\sum_{\vecbeta\in\Z^{m_1}}\int_{\vecbeta+C}f_j\bigl(\|b_x\vecu\|\bigr)\,d\vecu.
\end{align}
Using this equality, and the triangle inequality,
it follows that the left-hand side of \eqref{MAINTERMLEM1res} is
bounded above by
\begin{align*}
\sum_{\vecbeta\in\Z^{m_1}}\int_{C}\Bigl|f_j\bigl(\|b_x\vecbeta\|\bigr)
-f_j\bigl(\|b_x(\vecbeta+\vecu)\|\bigr)\Bigr|\,d\vecu.
\end{align*}
Since $n\geq m_1+3$, the function $f_j(r)$ is $C^1$,
and we have
\begin{align}\label{MAINTERMLEM1pf2}
\bigl|f_j(a)-f_j(b)\bigr|\leq (b-a)\sup_{s\in[a,b]}|f_j'(s)|
\qquad\text{for all $0\leq a\leq b$.}
\end{align}
Hence, recalling the definition of $\ell_x$,
it follows that the integrand in \eqref{MAINTERMLEM1pf1} is everywhere
bounded above by $\ell_x\cdot F_j\bigl(\|b_x\vecbeta\|\bigr)$,
where
\begin{align*}
F_j(r):=\sup\Bigl\{|f_j'(s)|\col s\in\bigl[\max(0,r-\ell_x),r+\ell_x\bigr]\Bigr\}.
\end{align*}
Hence \eqref{MAINTERMLEM1pf2} 
(and thus also the left-hand side of \eqref{MAINTERMLEM1res})
is bounded above by
\begin{align}\label{MAINTERMLEM1pf3} 
\ell_x\sum_{\vecbeta\in\Z^{m_1}}F_j\bigl(\|b_x\vecbeta\|\bigr)
=-\ell_x\int_0^\infty F_j'(r)A(r)\,dr
\end{align}
where
\begin{align*}
A(r):=\#\{\vecbeta\in\Z^{m_1}\col \|b_x\vecbeta\|\leq r\}.
\end{align*}
(The equality in \eqref{MAINTERMLEM1pf3}
holds by integration by parts;
note that $F_j(r)=0$ for all $r\geq R_j+\ell_x$,
so that the range of integration can just as well be taken to be $[0,R_j+\ell_x]$.)

We compute
\begin{align*}
\bigl|f_j'(s)\bigr|=\fV_{n-m_1}(n-m_1)\bigl(R_j^2-s^2\bigr)^{\frac{n-m_1}2-1}s
\qquad(0\leq s\leq R_j).
\end{align*}
and this function is verified to be increasing for $0\leq s\leq\kappa R_j$ %
and decreasing for $\kappa R_j\leq s\leq R_j$, %
where
\begin{align*}
\kappa:=(n-m_1-1)^{-\frac12}.
\end{align*}
(Note that $0<\kappa\leq 2^{-\frac12}$.)
Hence the function $F_j(r)$ is increasing for
$0\leq r\leq\kappa R_j+\ell_x$
(in fact it is constant on the interval
$\max(0,\kappa R_j-\ell_x)\leq r\leq\kappa R_j+\ell_x$),
and decreasing for $r\geq\kappa R_j+\ell_x$.
Note also that for $r\geq\kappa R_j+\ell_x$ we have
$F_j(r)=|f_j'(r-\ell_x)|$,
and thus
for all $r\in \bigl[\kappa R_j,R_j\bigr)$ we have:
\begin{align*}
F_j'(r+\ell_x)=\frac d{dr}|f_j'(r)|
=\fV_{n-m_1}(n-m_1)\bigl(R_j^2-r^2\bigr)^{\frac{n-m_1}2-2}
\Bigl(R_j^2-(n-m_1-1)r^2\Bigr)
\end{align*}
It follows that \eqref{MAINTERMLEM1pf3} 
(and thus also the left-hand side of \eqref{MAINTERMLEM1res})
is bounded above by
\begin{align}\notag
-\ell_x\int_{\kappa R_j+\ell_x}^{R_j+\ell_x} F_j'(r)A(r)\,dr
\hspace{280pt}
\\\label{MAINTERMLEM1pf4}
=\fV_{n-m_1}(n-m_1)\ell_x\int_{\kappa R_j}^{R_j}
\bigl(R_j^2-r^2\bigr)^{\frac{n-m_1}2-2}
\Bigl((n-m_1-1)r^2-R_j^2\Bigr)\,A(r+\ell_x)\,dr.
\end{align}

In order to bound $A(r)$, note that for every 
$\vecbeta\in\Z^{m_1}$ with $\|b_x\vecbeta\|\leq r$,
the parallelogram $b_x(\vecbeta+C)$ is contained in
$\scrB_{r+\ell_x}^n\cap V_x$ (recall that $V_x=x\R^{m_1}=\Span_{\R}\{\vecx_1,\ldots,\vecx_{m_1}\}$). 
Furthermore these
parallelograms are pairwise disjoint,
and each of them has volume $\dd(b_x)=\dd(x)^{-1}$,
whereas the ball $\scrB_{r+\ell_x}^n\cap V_x$ has volume $\fV_{m_1}(r+\ell_x)^{m_1}$.
(Here ``volume'' refers to the $m_1$-dimensional Lebesgue measure on $V_x$.)
Hence:
\begin{align*}
A(r)\leq \fV_{m_1}\,\dd(x)\,(r+\ell_x)^{m_1},\qquad\forall r\geq0.
\end{align*}
Using this bound in \eqref{MAINTERMLEM1pf4},
we conclude that the left-hand side of \eqref{MAINTERMLEM1res}
is bounded above by
\begin{align}\notag
\fV_{n-m_1}\fV_{m_1}(n-m_1)\dd(x)\ell_x\int_{\kappa R_j}^{R_j}
\bigl(R_j^2-r^2\bigr)^{\frac{n-m_1}2-2}
\Bigl((n-m_1-1)r^2-R_j^2\Bigr)\,(r+2\ell_x)^{m_1}\,dr.
\\\label{MAINTERMLEM1pf5}
\ll_{m_1}\fV_{n-m_1}n^2\dd(x)\ell_x\int_{\kappa R_j}^{R_j}
\bigl(R_j^2-r^2\bigr)^{\frac{n-m_1}2-2}
r^2\,(r+2\ell_x)^{m_1}\,dr.  %
\end{align}
Here the integral is
\begin{align}\notag
\ll_{m_1}
\sum_{a=0}^{m_1}\ell_x^{m_1-a}\int_0^{R_j}
\bigl(R_j^2-r^2\bigr)^{\frac{n-m_1}2-2}r^{2+a}\,dr
=\frac12
\sum_{a=0}^{m_1}\ell_x^{m_1-a}R_j^{n-m_1-1+a}\frac{\Gamma\bigl(\frac{n-m_1}2-1\bigr)\Gamma\bigl(\frac{a+3}2\bigr)}
{\Gamma\bigl(\frac{n-m_1+a+1}2\bigr)}
\\\label{MAINTERMLEM1pf6}
\ll_{m_1}\sum_{a=0}^{m_1}\ell_x^{m_1-a}R_j^{n-m_1-1+a} n^{-\frac{a+3}2}
\ll_{m_1,\ve} R_j^n\cdot n^{-\frac{m_1}2-2},
\end{align}
where in the last step we 
used $\ell_x\leq1$,
and we also used
$R_j=(W_{\nu_{j,2}}/\fV_n)^{1/n}$
and $\ve^n\leq W_{\nu_{j,2}}\leq\ve^{-n}$
to see that $R_j\asymp_{\ve}\sqrt n$.
Combining \eqref{MAINTERMLEM1pf5} and \eqref{MAINTERMLEM1pf6},
and using $\fV_{n-m_1}\asymp_{m_1}\fV_n n^{m_1/2}$
and $\fV_n R_j^n=W_{\nu_{j,2}}$,
we obtain the first bound in \eqref{MAINTERMLEM1res}.
\end{proof}
We complement Lemma \ref{MAINTERMLEM1} with a cruder bound which is valid regardless of the size of $\ell_x$.
\begin{lem}\label{MAINTERMLEM2}
Fix $\ve>0$ and $j\in\{1,\ldots,m_2\}$.
Assume that $n$ is so large that $n\geq m_1+3$
and $\ve^n\leq V_{k_1},W_{\nu_{j,2}}\leq\ve^{-n}$.
Then for every $x\in U_{m_1}$
with $\rho'(xB_1^{\mathsf{T}})=1$, we have
\begin{align}\label{MAINTERMLEM2res}
\sum_{\vecbeta\in\Z^{m_1}}
f_j\bigl(\|b_x\vecbeta\|\bigr)
\ll_{m_1,\ve} W_{\nu_{j,2}}\, n^{m_1}
\quad\text{and}\quad
\dd(x)W_{\nu_{j,2}}
\ll_{m_1,\ve}
W_{\nu_{j,2}}\, n^{m_1}.
\end{align}
\end{lem}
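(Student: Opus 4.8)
The plan is to reduce both bounds in \eqref{MAINTERMLEM2res} to two elementary facts about the matrix $x$. First I would note that the hypothesis $\rho'(xB_1^{\mathsf{T}})=1$, via \eqref{MAINTERM3}, forces the $i$th column $\vecx_i$ of $x$ to lie in the open $n$-ball of volume $V_{\nu_{i,1}}\le V_{k_1}$ centered at the origin, so that $\|\vecx_i\|<R_0:=(V_{k_1}/\fV_n)^{1/n}$; and, exactly as in the proof of Lemma~\ref{MAINTERMLEM1} (using $\ve^n\le V_{k_1},W_{\nu_{j,2}}\le\ve^{-n}$ and Stirling's formula), $R_0\asymp_{\ve}\sqrt n$ and $R_j\asymp_{m_1,\ve}\sqrt n$. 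The second bound in \eqref{MAINTERMLEM2res} is then immediate from Hadamard's inequality, $\dd(x)\le\prod_{i=1}^{m_1}\|\vecx_i\|<R_0^{m_1}\ll_{m_1,\ve}n^{m_1/2}\le n^{m_1}$, after multiplying by $W_{\nu_{j,2}}$ (one in fact gets exponent $m_1/2$).

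For the first bound I would proceed as follows. Setting $G:=x^{\mathsf{T}}x$, every entry of $G$ has absolute value at most $R_0^2\ll_{\ve}n$, whence $\|G\|_{\mathrm{op}}\ll_{m_1,\ve}n$, and therefore $\|b_x\vecbeta\|^2=\vecbeta^{\mathsf{T}}G^{-1}\vecbeta\ge\|G\|_{\mathrm{op}}^{-1}\|\vecbeta\|^2\gg_{m_1,\ve}n^{-1}$ for every $\vecbeta\in\Z^{m_1}\setminus\{\bn\}$. Since $b_x$ is injective, this says that the full-rank lattice $\Lambda:=b_x\Z^{m_1}$ inside the $m_1$-dimensional Euclidean space $x\R^{m_1}$ has $\lambda_i(\Lambda)\ge\lambda_1(\Lambda)\gg_{m_1,\ve}n^{-1/2}$ for all $i$, so Lemma~\ref{LATTICECOUNTBOUNDlem1} (applied with $x\R^{m_1}$ in place of $\R^n$) gives
\begin{align*}
\#\bigl\{\vecbeta\in\Z^{m_1}\col\|b_x\vecbeta\|\le R_j\bigr\}=\#\bigl(\Lambda\cap\scrB_{R_j}^n\bigr)\asymp_{m_1}\prod_{i=1}^{m_1}\Bigl(1+\frac{R_j}{\lambda_i(\Lambda)}\Bigr)\ll_{m_1,\ve}n^{m_1}.
\end{align*}
Using $0\le f_j(r)\le f_j(0)=\fV_{n-m_1}R_j^{n-m_1}$ together with the fact that $f_j$ vanishes outside $\scrB_{R_j}^n$, I would then bound $\sum_{\vecbeta\in\Z^{m_1}}f_j(\|b_x\vecbeta\|)\le\fV_{n-m_1}R_j^{n-m_1}\cdot\#(\Lambda\cap\scrB_{R_j}^n)$, and finish by writing $\fV_{n-m_1}R_j^{n-m_1}=W_{\nu_{j,2}}\cdot\fV_{n-m_1}/(\fV_nR_j^{m_1})\asymp_{m_1,\ve}W_{\nu_{j,2}}$, where I invoke $\fV_{n-m_1}\asymp_{m_1}\fV_nn^{m_1/2}$ (as already used in the proof of Lemma~\ref{MAINTERMLEM1}) and $R_j^{m_1}\asymp_{m_1,\ve}n^{m_1/2}$.

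There is no serious obstacle here; the one point that must be handled with care is \emph{uniformity in $x$}, in particular the regime where the columns of $x$ are nearly linearly dependent, so that $\dd(x)$, and hence the covolume $\dd(x)^{-1}$ of $\Lambda$, is large. This is exactly why the argument goes through the crude bound $\lambda_i(\Lambda)\ge\lambda_1(\Lambda)$ controlled by $\|G\|_{\mathrm{op}}$, rather than trying to compare the lattice sum directly with the integral \eqref{MAINTERM7} — the latter route, used for the sharper estimate in Lemma~\ref{MAINTERMLEM1}, is only available under the extra hypothesis $\ell_x\le1$.
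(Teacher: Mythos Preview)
Your proof is correct and follows essentially the same route as the paper: both handle $\dd(x)\ll n^{m_1/2}$ via Hadamard, and for the sum both bound $f_j$ by its maximum $\fV_{n-m_1}R_j^{n-m_1}$, establish $\lambda_1(b_x\Z^{m_1})\gg_{m_1,\ve} n^{-1/2}$ from $\|\vecx_i\|\ll\sqrt n$, and then count lattice points in $\scrB_{R_j}$. The only cosmetic difference is that you invoke Lemma~\ref{LATTICECOUNTBOUNDlem1} for this count, whereas the paper writes out the equivalent packing argument by hand.
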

\begin{proof}
It follows from 
$\rho'(xB_1^{\mathsf{T}})=1$
that %
$\|\vecx_j\|<(V_{k_1}/\fV_n)^{1/n}\ll_{\ve} \sqrt n$ for each $j$.
Hence $\dd(x)\ll_{m_1,\ve} n^{m_1}$,
and thus the second bound in \eqref{MAINTERMLEM2res} holds.

In order to prove the first bound
in \eqref{MAINTERMLEM2res},
we start by noticing
\begin{align}\label{MAINTERMLEM2pf1}
\sum_{\vecbeta\in\Z^{m_1}}
f_j\bigl(\|b_x\vecbeta\|\bigr)
\leq \fV_{n-m_1}R_j^{n-m_1}\cdot
\#\bigl\{\vecbeta\in\Z^{m_1}\col \|b_x\vecbeta\|<R_j\bigr\},
\end{align}
since $f_j(r)\leq\fV_{n-m_1}R_j^{n-m_1}$
for all $r\geq0$.
Now note that for all $\vecbeta\in\Z^{m_1}\setminus\{\bn\}$
we have $\|x\transp b_x\vecbeta\|=\|\vecbeta\|\geq1$,
and combining this with the fact that 
$\|\vecx_j\|\ll_{\ve}\sqrt n$ for each $j\in\{1,\ldots,m_1\}$,
it follows that $\|b_x\vecbeta\|\gg_{m_1,\ve} n^{-\frac12}$.
It follows that the distance between any two vectors 
$b_x\vecbeta$ and $b_x\vecbeta'$ for $\vecbeta\neq\vecbeta'\in\Z^{m_1}$
is $\gg_{m_1,\ve} n^{-\frac12}$,
and hence there exists a number $r\gg_{m_1,\ve} n^{-\frac12}$
such that the balls $\scrB_r^n+b_x\vecbeta$ are pairwise disjoint as $\vecbeta$ runs through $\Z^{m_1}$.
We may also require that $r\leq n^{-\frac12}$
(indeed otherwise replace $r$ by $n^{-\frac12}$).
Then it follows that for every 
$\vecbeta\in\Z^{m_1}$ with $\|b_x\vecbeta\|<R_j$,
the ball $\scrB_r^n+b_x\vecbeta$
is contained in $\scrB_{R_j+n^{-1/2}}^n$.
Intersecting these balls with $V_x$ and considering the volume,
it follows that
\begin{align}\label{MAINTERMLEM2pf2}
\#\bigl\{\vecbeta\in\Z^{m_1}\col \|b_x\vecbeta\|<R_j\bigr\}
\leq\frac{\fV_{m_1}(R_j+n^{-1/2})^{m_1}}{\fV_{m_1}r^{m_1}}
\ll_{m_1,\ve} R_j^{m_1}r^{-m_1}
\ll_{m_1,\ve} R_j^{m_1}n^{\frac{m_1}2}.
\end{align}
(Here we used $\ve^n\leq W_{\nu_{j,2}}\leq\ve^{-n}$
to conclude that $n^{-1/2}\leq n^{1/2}\ll_{\ve} R_j$.)
Combining \eqref{MAINTERMLEM2pf1}
and \eqref{MAINTERMLEM2pf2},
and using $\fV_{n-m_1}\asymp_{m_1}\fV_n n^{\frac{m_1}2}$,
we obtain the first bound in \eqref{MAINTERMLEM2res}.
\end{proof}

We will now apply 
Lemma \ref{MAINTERMLEM1}
and 
Lemma \ref{MAINTERMLEM2}
to estimate the expression in \eqref{MAINTERM5}.
Let us pick $\ve=\frac12$ in both the lemmas;
thus in order for the lemmas to apply,
from now on we assume that $n$ is so large that
$n\geq m_1+3$ and so that all the $V_i$s and all the $W_i$s
lie in the interval $[2^{-n},2^n]$.
Recall that we also assume $n>k_1+k_2\geq m_1+m_2$.
Using Lemma \ref{MAINTERMLEM1},
we conclude that for 
every $x\in U_{m_1}$ satisfying $\ell_x\leq1$,
the expression in \eqref{MAINTERM5} equals
\begin{align*}
\biggl(\prod_{j=1}^{m_2}W_{\nu_{j,2}}\biggr)\dd(x)^{m_2}\Bigl(1+O_{m}(\ell_x)\Bigr).
\end{align*}

Using Lemma \ref{MAINTERMLEM2},
we conclude that for 
every $x\in U_{m_1}$ satisfying $\rho'(xB_1^{\mathsf{T}})=1$,
the expression in \eqref{MAINTERM5} equals
\begin{align}\label{MAINTERM5a}
\biggl(\prod_{j=1}^{m_2}W_{\nu_{j,2}}\biggr)\dd(x)^{m_2}
\prod_{j=1}^{m_2}\Bigl(1+O_{m_1}\Bigl(\frac{n^{m_1}}{\dd(x)}\Bigr)\Bigr)
=\biggl(\prod_{j=1}^{m_2}W_{\nu_{j,2}}\biggr)\dd(x)^{m_2}
\Bigl(1+O_{m}\Bigl(\frac{n^{m_1m_2}}{\dd(x)^{m_2}}\Bigr)\Bigr).
\end{align}

We will use the first of these estimates when $\ell_x\leq c$ and the second when $\ell_x>c$,
where $c$ is a parameter in the interval $(0,1]$
which we will choose later.
Let $\chi_c$ be the characteristic function of $\ell_x\leq c$
and let $\tchi_c=1-\chi_c$ be the characteristic function of $\ell_x>c$.
Then we conclude that 
\eqref{MAINTERM2} (and thus also \eqref{MAINTERM1pre})
equals
\begin{align}\notag
\bigl(1+O_m(2^{-n})\bigr)\biggl(\prod_{j=1}^{m_2}W_{\nu_{j,2}}\biggr)
\Biggl(\int_{U_{m_1}}\rho'(xB_1^{\mathsf{T}})\,dx
+O_{m}\biggl(\int_{U_{m_1}}\rho'(xB_1^{\mathsf{T}})\,\chi_c(x)\,\ell_x\,dx
\hspace{40pt}
\\\label{MAINTERM8}
+n^{m_1m_2}\int_{U_{m_1}}\rho'(xB_1^{\mathsf{T}})\,\tchi_c(x)\,\frac{dx}{\dd(x)^{m_2}}\biggr)\Biggr)
\end{align}
Here we have, by \eqref{MAINTERM3},
\begin{align*}
\int_{U_{m_1}}\rho'(xB_1^{\mathsf{T}})\,dx=\prod_{j=1}^{m_1}V_{\nu_{j,1}}.
\end{align*}
Furthermore, the first error term is
\begin{align*}
\int_{U_{m_1}}\rho'(xB_1^{\mathsf{T}})\,\chi_c(x)\,\ell_x\,dx
\leq c\int_{U_{m_1}}\rho'(xB_1^{\mathsf{T}})\,dx
=c\prod_{j=1}^{m_1}V_{\nu_{j,1}}.
\end{align*}

In order to bound the second error term in \eqref{MAINTERM8},
we apply
\cite[Lemma 5.2]{aSaS2016};
combined with \eqref{MAINTERM3} this gives that
\begin{align}\notag
\int_{U_{m_1}}&\rho'(xB_1^{\mathsf{T}})\,\tchi_c(x)\,\frac{dx}{\dd(x)^{m_2}}
\\\label{MAINTERM9}
&=
\frac{\prod_{j=n-m_1+1}^{n}(j\fV_j)}{\prod_{j=1}^{m_1}(j\fV_j)}
\int_{U_{m_1}\cap(\R^{m_1}\times\{\bn\})^{m_1}}\rho'(xB_1^{\mathsf{T}})\,\tchi_c(x)\,\dd(x)^{n-m_1-m_2}\,dx,
\end{align}
where in the second line, 
$\R^{m_1}\times\{\bn\}$ denotes the subspace
$\{\vecx\in\R^n\col x_{m_1+1}=\cdots=x_n=0\}$ of $\R^n$,
and $dx$ denotes the $m_1^2$-dimensional Lebesgue measure on
$(\R^{m_1}\times\{\bn\})^{m_1}$
(whereas in the first line it is the $m_1n$-dimensional Lebesgue measure on $(\R^n)^{m_1}$).
We will now use the following lemma:
\begin{lem}\label{MAINTERMLEM3}
For any $x\in U_{m_1}$ with
$\rho'(xB_1^{\mathsf{T}})\tchi_c(x)=1$,
we have
$\dd(x)\ll_{m_1} c^{-1}n^{\frac{m_1-1}2}$.
\end{lem}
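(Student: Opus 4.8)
The plan is to exploit the duality between the columns of $x$ and those of $b_x=x(x\transp x)^{-1}$, together with the elementary ``base times height'' formula for the parallelotope volume $\dd(x)$.

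First I would unpack the two hypotheses. Writing $\vecx_1,\dots,\vecx_{m_1}$ for the columns of $x$, the condition $\rho'(xB_1^{\transp})=1$ together with \eqref{MAINTERM3} forces each $\vecx_j$ to lie in the open $n$-ball of volume $V_{\nu_{j,1}}$ centred at the origin, so that $\|\vecx_j\|<r_n:=(V_{k_1}/\fV_n)^{1/n}$ for every $j$; since we are working under the standing assumption that all the $V_i$ lie in $[2^{-n},2^n]$, Stirling's formula (which gives $\fV_n^{1/n}\asymp n^{-1/2}$) yields $r_n\ll\sqrt n$ with an \emph{absolute} implied constant. The condition $\tchi_c(x)=1$ says simply that $\ell_x>c$.

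Next I would convert $\ell_x>c$ into a lower bound on a single dual basis vector. Writing $\vecb_1,\dots,\vecb_{m_1}$ for the columns of $b_x$, the triangle inequality gives $\ell_x=\sup_{\vecu\in C}\bigl\|\sum_j u_j\vecb_j\bigr\|\le\tfrac12\sum_{j}\|\vecb_j\|\le\tfrac{m_1}2\max_j\|\vecb_j\|$, so if $i_0$ is an index with $\|\vecb_{i_0}\|=\max_j\|\vecb_j\|$ then $\|\vecb_{i_0}\|>2c/m_1$. Now the identity $x\transp b_x=I_{m_1}$ says $\vecx_i\cdot\vecb_j=\delta_{ij}$, and $\vecb_{i_0}$, being a column of $x(x\transp x)^{-1}$, lies in $V_x=x\R^{m_1}$. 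Hence $\vecb_{i_0}/\|\vecb_{i_0}\|$ is a unit normal, inside $V_x$, to the hyperplane $H:=\Span\{\vecx_j:j\ne i_0\}\subset V_x$, and the distance from $\vecx_{i_0}$ to $H$ equals $|\vecx_{i_0}\cdot\vecb_{i_0}|/\|\vecb_{i_0}\|=1/\|\vecb_{i_0}\|$. Writing $x^{(i_0)}$ for the $n\times(m_1-1)$ matrix obtained from $x$ by deleting column $i_0$, the base-times-height formula for the $m_1$-dimensional parallelotope volume together with Hadamard's inequality for the $(m_1-1)$-dimensional base then gives
\begin{align*}
\dd(x)=\dd\bigl(x^{(i_0)}\bigr)\cdot\frac1{\|\vecb_{i_0}\|}
\le\Bigl(\prod_{j\ne i_0}\|\vecx_j\|\Bigr)\cdot\frac1{\|\vecb_{i_0}\|}
<r_n^{\,m_1-1}\cdot\frac{m_1}{2c},
\end{align*}
and, combining this with $r_n\ll\sqrt n$, one obtains $\dd(x)\ll_{m_1}c^{-1}n^{(m_1-1)/2}$.

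The only point requiring a little thought is spotting the right reduction: one wants to ``spend'' the lower bound $\ell_x>c$ on exactly one height of the $x$-parallelotope, and this works because $\ell_x$ is comparable (with an $m_1$-dependent constant) to the largest dual basis vector $\|\vecb_{i_0}\|$, whose reciprocal is precisely the height of $\vecx_{i_0}$ over the span of the remaining columns. Once that observation is in place, the rest is Hadamard's inequality plus the crude bound $\|\vecx_j\|\ll\sqrt n$ coming from $\rho'(xB_1^{\transp})=1$, and no further input is needed; as a sanity check, for $m_1=1$ the argument reduces to $\dd(x)=\|\vecx_1\|<\tfrac12 c^{-1}$, matching the claimed exponent $n^{0}$.
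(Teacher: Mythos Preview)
Your proof is correct and follows essentially the same route as the paper's: both use $\ell_x>c$ to find a column $\vecb_{i_0}$ of $b_x$ with $\|\vecb_{i_0}\|\gg_{m_1}c$, observe via $x\transp b_x=I_{m_1}$ that $1/\|\vecb_{i_0}\|$ is the height of $\vecx_{i_0}$ over the span of the remaining columns, and then combine the base-times-height formula with Hadamard's inequality and the bound $\|\vecx_j\|\ll\sqrt n$ from $\rho'(xB_1^{\transp})=1$. The only differences are cosmetic (your notation $\vecb_j$ versus the paper's $b_x\vece_j$, and your explicit invocation of Hadamard).
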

\begin{proof}
Assume that 
$x\in U_{m_1}$ and
$\rho'(xB_1^{\mathsf{T}})\tchi_c(x)=1$.
Then
\begin{align*}
c<\ell_x\leq\frac12\sum_{j=1}^{m_1}\|b_x\vece_j\|,
\end{align*}
and hence there exists some $j\in\{1,\ldots,m_1\}$
for which $\|b_x\vece_j\|>(2/m_1)c$.
We have $xb_x\vece_j=\vece_j$,
which implies that
$\vecx_j\cdot b_x\vece_j=1$
while $b_x\vece_j$ is
orthogonal to the subspace
\begin{align*}
V_{x,j}:=\Span_{\R}\{\vecx_i\col i\in\{1,\ldots,m_1\}\setminus\{j\}\}.
\end{align*}
It follows that the orthogonal projection of $\vecx_j$ onto $V_{x,j}^\perp$
has length
\begin{align*}
\vecx_j\cdot\frac{b_x\vece_j}{\|b_x\vece_j\|}=\|b_x\vece_j\|^{-1}.
\end{align*}
Hence $\dd(x)=\|b_x\vece_j\|^{-1}\dd'$ where $\dd'$ is the $(m_1-1)$-dimensional volume of the
parallelotope in $\R^n$ spanned by the vectors $\vecx_1,\ldots,\vecx_{m_1}$ except $\vecx_j$.
Clearly $\dd'\leq\prod_{\substack{i=1\\(i\neq j)}}^{m_1}\|\vecx_i\|$,
and $\rho'(xB_1^{\mathsf{T}})\tchi_c(x)=1$
implies that $\|\vecx_i\|<(V_{k_1}/\fV_n)^{1/n}
\leq(2^n/\fV_n)^{1/n}\ll n^{1/2}$ for each $i$.
Hence:
\begin{align*}
\dd(x)\ll_{m_1}\|b_x\vece_j\|^{-1}n^{\frac{m_1-1}2}
\ll_{m_1}c^{-1}n^{\frac{m_1-1}2}.
\end{align*}
\end{proof}
Let $K=K(m_1)$ be the implied constant in the bound in Lemma \ref{MAINTERMLEM3}.
It follows that the expression in 
\eqref{MAINTERM9} is
\begin{align*}
&\leq \frac{\prod_{j=n-m_1+1}^{n}(j\fV_j)}{\prod_{j=1}^{m_1}(j\fV_j)}
\int_{U_{m_1}\cap(\R^{m_1}\times\{\bn\})^{m_1}}\rho'(xB_1^{\mathsf{T}})
\cdot\bigl(Kc^{-1}n^{\frac{m_1-1}2}\bigr)^{n-m_1-m_2}\,dx
\\
&=\frac{\prod_{j=n-m_1+1}^{n}(j\fV_j)}{\prod_{j=1}^{m_1}(j\fV_j)}
\bigl(Kc^{-1}n^{\frac{m_1-1}2}\bigr)^{n-m_1-m_2}
\prod_{j=1}^{m_1}\biggl(\fV_{m_1}\Bigl(\frac{V_{\nu_{j,1}}}{\fV_n}\Bigr)^{m_1/n}\biggr)
\\
&\ll_{m} 
\Biggl(\prod_{j=0}^{m_1-1}\biggl(\Bigl(\frac{2\pi e}n\Bigr)^{\frac{n}2} n^{\frac{j+1}2}\biggr)\Biggr)
K^nc^{m-n}n^{\frac{m_1-1}2(n-m)}
\prod_{j=1}^{m_1} n^{m_1/2}
\\
&=\biggl(\frac{(2\pi e)^{m_1/2}K}{c\sqrt n}\biggr)^{n} n^{\frac{m_1^2}4+\frac{m_1}4-\frac{m_1m_2}2+\frac m2} c^m.
\end{align*}
Using this bound,
together with the fact that
$\prod_{j=1}^{m_1}V_{\nu_{j,1}}^{-1}\leq 2^{m_1n}$,
we conclude that
\eqref{MAINTERM8} (and thus also \eqref{MAINTERM1pre})
equals
\begin{align}\notag
\bigl(1+O_m(2^{-n})\bigr)
\biggl(\prod_{j=1}^{m_1}V_{\nu_{j,1}}\biggr)
\biggl(\prod_{j=1}^{m_2}W_{\nu_{j,2}}\biggr)
\Biggl(1+O_m\biggl(c+
\biggl(\frac{2^{m_1}(2\pi e)^{m_1/2}K}{c\sqrt n}\biggr)^{n} n^{\frac{m_1^2}4+\frac{m_1}4+\frac{m_1m_2}2+\frac m2} c^m
\biggr)\Biggr).
\end{align}
Choosing now $c:=2^{m_1+2}(2\pi e)^{m_1/2}Kn^{-\frac12}$, 
this becomes
\begin{align}\label{MAINTERM10}
\biggl(\prod_{j=1}^{m_1}V_{\nu_{j,1}}\biggr)
\biggl(\prod_{j=1}^{m_2}W_{\nu_{j,2}}\biggr)
\Bigl(1+O_m\bigl(n^{-\frac12}\bigr)\Bigr).
\end{align}

To sum up,
we have proved that the contribution 
to \eqref{MAINTHEOREMabs2resREP}
from any fixed choice of
$m_1\in\{1,\ldots,k_1\}$,
$B_1\in\scrM_{k_1,m_1}'$,
$m_2\in\{1,\ldots,k_2\}$,
$B_2\in\scrM_{k_2,m_2}'$
equals \eqref{MAINTERM10}
whenever $n$ is sufficiently large.
Recall here that 
$\nu_{j,1}:=\nu_j(B_1)$ and $\nu_{j,2}:=\nu_j(B_2)$.
Hence, since each set $\scrM_{k,m}'$ is finite,
it follows that 
\begin{align*}
\sum_{m_1=1}^{k_1}\sum_{B_1\in \scrM'_{k_1,m_1}}\sum_{m_2=1}^{k_2}\sum_{B_2\in \scrM'_{k_2,m_2}}
\sum_{\beta\in M_{m_1,m_2}\!(\Z)} W(\beta)
\int_{S(\beta)}\rho\bigl(xB_1^{\mathsf{T}},yB_2^{\mathsf{T}}\bigr)\,d\eta_\beta(x,y)
\hspace{50pt}
\\
\to\biggl(\sum_{m_1=1}^{k_1}\sum_{B_1\in \scrM'_{k_1,m_1}}
\prod_{j=1}^{m_1}V_{\nu_j(B_1)}\biggr)
\cdot\biggl(
\sum_{m_2=1}^{k_2}\sum_{B_2\in \scrM'_{k_2,m_2}}
\prod_{j=1}^{m_2}W_{\nu_j(B_2)}\biggr),
\qquad\text{as }\: n\to\infty.
\end{align*}

Finally we note that for any $k\in\Z^+$,
the matrices in %
$\sqcup_{m=1}^k\scrM_{k,m}'$
are exactly the transposes of the matrices considered in
\cite[Thm.\ 3]{aS2011o},
with the ``$\nu$'' being the same as ours.
Hence by \cite[Lemma 3]{aS2011o},
for any given $1=\nu_1<\cdots<\nu_m\leq k$,
the number of matrices $B\in\scrM_{k,m}'$ having
$\nu_j(B)=\nu_j$ for all $j$ equals 
$2^{k-m}$ times\footnote{This factor $2^{k-m}$ comes from the number of ways to choose the \textit{signs} of the matrix entries.}
the number of partitions 
$P=\{B_1,\ldots,B_{\#P}\}$ 
in $\scrP(k)$ satisfying $\#P=m$ and $\{\mathsf{m}_{B_1},\ldots,\mathsf{m}_{B_m}\}=\{\nu_1,\ldots,\nu_m\}$.
Therefore the previous limit relation can be re-expressed as
\begin{align}\notag
\sum_{m_1=1}^{k_1}\sum_{B_1\in \scrM'_{k_1,m_1}}\sum_{m_2=1}^{k_2}\sum_{B_2\in \scrM'_{k_2,m_2}}
\sum_{\beta\in M_{m_1,m_2}\!(\Z)} W(\beta)
\int_{S(\beta)}\rho\bigl(xB_1^{\mathsf{T}},yB_2^{\mathsf{T}}\bigr)\,d\eta_\beta(x,y)
\hspace{50pt}
\\\label{MAINTERM11}
\to
\biggl(\sum_{P\in\scrP(k_1)}2^{k_1-\#P}\prod_{B\in P}V_{\mathsf{m}_B}\biggr)
\biggl(\sum_{P\in\scrP(k_2)}2^{k_2-\#P}\prod_{B\in P}W_{\mathsf{m}_B}\biggr),
\qquad\text{as }\: n\to\infty.
\end{align}

\subsection{Bounding the remaining terms}

In view of \eqref{MAINTERM11},
in order to complete the proof of Theorem \ref{MOMENTTHM},
it remains to prove that the total contribution from
all terms in \eqref{MAINTHEOREMabs2resREP}
with $B_1\notin\scrM_{k_1,m_1}'$ or $B_2\notin\scrM_{k_2,m_2}'$
tends to zero as $n\to\infty$.

To start with, we will prove a bound on the contribution
in \eqref{MAINTHEOREMabs2resREP}
from any given
$m_1\in\{1,\ldots,k_1\}$,
$B_1\in A_{k_1,m_1}$,
$m_2\in\{1,\ldots,k_2\}$,
$B_2\in A_{k_2,m_2}$,
that is, on the expression
\begin{align}\label{MAINTERM12}
\sum_{\beta\in M_{m_1,m_2}\!(\Z)} W(\beta)
\int_{S(\beta)}\rho\bigl(xB_1^{\mathsf{T}},yB_2^{\mathsf{T}}\bigr)\,d\eta_\beta(x,y).
\end{align}
Throughout our discussion, we will assume that $n$ is so large that
\begin{align}\label{MAINTERM13}
\max(V_{k_1},W_{k_2})\leq 2^n.
\end{align}
Recall that we write $m:=m_1+m_2$.
Let us now also set $k:=k_1+k_2$.
\begin{lem}\label{MAINTERMLEM4}
The number of $\beta\in M_{m_1,m_2}\!(\Z)$ for which
$\int_{S(\beta)}\rho\bigl(xB_1^{\mathsf{T}},yB_2^{\mathsf{T}}\bigr)\,d\eta_\beta(x,y)>0$,
is $\ll_k n^{k_1k_2}$.
\end{lem}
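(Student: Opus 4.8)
The plan is to exploit the very explicit form of $\rho$ given in \eqref{rhoCHOICE}: each $\rho_j$ and $\trho_j$ is the characteristic function of a punctured ball, so $\rho$ is $\{0,1\}$-valued, and hence $\int_{S(\beta)}\rho\bigl(xB_1^{\mathsf{T}},yB_2^{\mathsf{T}}\bigr)\,d\eta_\beta(x,y)>0$ forces the existence of a pair $\langle x,y\rangle\in S(\beta)$ with $\rho\bigl(xB_1^{\mathsf{T}},yB_2^{\mathsf{T}}\bigr)=1$. Fixing such a pair and writing $\vecv_1,\ldots,\vecv_{k_1}$ for the columns of $xB_1^{\mathsf{T}}$ and $\vecw_1,\ldots,\vecw_{k_2}$ for the columns of $yB_2^{\mathsf{T}}$, this means $\|\vecv_i\|<R_i:=(V_i/\fV_n)^{1/n}$ for all $i$ and $\|\vecw_j\|<\widetilde{R}_j:=(W_j/\fV_n)^{1/n}$ for all $j$. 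First I would record that $\fV_n^{1/n}\asymp n^{-1/2}$ by Stirling's formula, so that under the standing assumption \eqref{MAINTERM13} we have $R_i,\widetilde{R}_j\ll\sqrt n$ for $n$ large, with an absolute implied constant.

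The key step is to pass from $\beta$ to the \emph{integer} matrix $\gamma:=B_1\beta B_2^{\mathsf{T}}\in M_{k_1,k_2}(\Z)$. Since $x\transp y=\beta$ on $S(\beta)$, we have $\gamma=(xB_1^{\mathsf{T}})\transp(yB_2^{\mathsf{T}})$, so the $(i,j)$ entry of $\gamma$ equals $\vecv_i\cdot\vecw_j$ and hence $|\gamma_{ij}|\leq\|\vecv_i\|\,\|\vecw_j\|<R_i\widetilde{R}_j\ll n$. Thus $\gamma$ is an integer $k_1\times k_2$ matrix all of whose entries lie in an interval of length $O(n)$, so it takes at most $O(n)^{k_1k_2}\ll_k n^{k_1k_2}$ values. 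Finally, since $B_1\in A_{k_1,m_1}\subset M_{k_1,m_1}(\Z)^*$ and $B_2\in A_{k_2,m_2}\subset M_{k_2,m_2}(\Z)^*$ have full column ranks $m_1$ and $m_2$, the linear map $\beta\mapsto B_1\beta B_2^{\mathsf{T}}$ on $M_{m_1,m_2}(\R)$ is injective (explicitly, $\beta=(B_1\transp B_1)^{-1}B_1\transp\,\gamma\,B_2(B_2\transp B_2)^{-1}$), so $\beta$ is determined by $\gamma$. Combining the last two observations yields the asserted bound $\ll_k n^{k_1k_2}$ on the number of admissible $\beta$.

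I do not expect a genuine obstacle here; the only point that requires a moment's care is that one should \emph{not} try to bound the entries $\beta_{ij}=\vecx_i\cdot\vecy_j$ of $\beta$ directly, since the columns $\vecx_i$ of $x$ and $\vecy_j$ of $y$ need not be short when $B_1$ or $B_2$ has large entries, and so no estimate uniform over $A_{k_1,m_1}\times A_{k_2,m_2}$ would result. Replacing $\beta$ by $\gamma=B_1\beta B_2^{\mathsf{T}}$, whose entries are inner products of the genuinely short vectors $\vecv_i,\vecw_j$ making up $xB_1^{\mathsf{T}}$ and $yB_2^{\mathsf{T}}$, is exactly what makes the argument go through, and it also explains why the exponent is $k_1k_2$ rather than $m_1m_2$.
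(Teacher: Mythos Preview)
Your proposal is correct and follows essentially the same argument as the paper: both pass from $\beta$ to the integer matrix $B_1\beta B_2^{\mathsf{T}}$, bound its entries by $\ll n$ via Cauchy--Schwarz using the support conditions on $\rho$ and the standing assumption \eqref{MAINTERM13}, and then invoke injectivity of $\beta\mapsto B_1\beta B_2^{\mathsf{T}}$ from the full column rank of $B_1,B_2$. Your added remark on why bounding the entries of $\beta$ directly would not work is a helpful clarification but does not change the route.
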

\begin{proof}
If $\beta\in M_{m_1,m_2}\!(\Z)$ satisfies
$\int_{S(\beta)}\rho(xB_1^{\mathsf{T}},yB_2^{\mathsf{T}})\,d\eta_\beta(x,y)>0$
then there exists some $\langle x,y\rangle\in S(\beta)$
for which $\rho(xB_1^{\mathsf{T}},yB_2^{\mathsf{T}})>0$.
By the definition of $\rho$,
and using \eqref{MAINTERM13},
this implies that all the column vectors of
$xB_1^{\mathsf{T}}$ and of $yB_2^{\mathsf{T}}$
have lengths %
$\leq(2^n/\fV_n)^{1/n}\ll\sqrt n$.
But $\langle x,y\rangle\in S(\beta)$ means that
$x\transp y=\beta$, and so
$B_1\beta B_2^{\mathsf{T}}=B_1x\transp y B_2^{\mathsf{T}}
=(xB_1^{\mathsf{T}})\transp(y B_2^{\mathsf{T}})$.
This means that every matrix entry
of $B_1\beta B_2^{\mathsf{T}}$
is equals the scalar product of a column vector of 
$xB_1^{\mathsf{T}}$ and a column vector of $yB_2^{\mathsf{T}}$.
Hence, by the Cauchy--Schwarz inequality,
all the matrix entries of $B_1\beta B_2^{\mathsf{T}}$ are $\ll n$.
It follows that the number of possibilities for the matrix
$B_1\beta B_2^{\mathsf{T}}\in M_{k_1,k_2}(\Z)$ is $\ll_k n^{k_1k_2}$.
The proof of the lemma is now concluded by noticing that 
the map $\beta\mapsto B_1\beta B_2^{\mathsf{T}}$ is injective,
since $\dd(B_1)>0$ and $\dd(B_2)>0$.
\end{proof}
\begin{lem}\label{MAINTERMLEM5}
For every $\beta\in M_{m_1,m_2}(\R)$, we have
\begin{align}\label{MAINTERMLEM5res}
&\int_{S(\beta)}\rho\bigl(xB_1^{\mathsf{T}},yB_2^{\mathsf{T}}\bigr)\,d\eta_\beta(x,y)
\leq \int_{S(0)}\rho\bigl(xB_1^{\mathsf{T}},yB_2^{\mathsf{T}}\bigr)\,d\eta_0(x,y)
\\\notag
&=\frac{\prod_{j=n-m_1+1}^n (j\fV_j)}{\prod_{j=n-m_1-m_2+1}^{n-m_2} (j\fV_j)}
\int_{(\R^{n-m_2}\times\{\bn\})^{m_1}}\rho'(xB_1^{\mathsf{T}})\,dx
\int_{(\R^{n-m_1}\times\{\bn\})^{m_2}}\trho'(xB_2^{\mathsf{T}})\,dx,
\end{align}
where in the last two integrals, 
$\rho'$ and $\trho'$ are as in \eqref{rhoptrhopDEF},
and for any $0<\ell<n$, 
$\R^\ell\times\{\bn\}$
denotes the subspace
$\{\vecx\in\R^n\col x_{\ell+1}=\cdots=x_n=0\}$ of $\R^n$,
and ``$dx$'' stands for the natural Lebesgue measure in each case.
\end{lem}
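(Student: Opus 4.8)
Recall that $\rho$ here is the product \eqref{rhoCHOICEvar}--\eqref{rhoptrhopDEF} of indicator functions of balls; the plan is to split \eqref{MAINTERMLEM5res} into an \emph{inequality}, proved by a convexity (Brunn-type) slicing argument, and an \emph{equality} for $\eta_0$, proved via the disintegration formula of \cite[Lemma 5.2]{aSaS2016}. We may assume no row of $B_1$ or of $B_2$ vanishes, since otherwise $\rho'(\,\cdot\,B_1^{\mathsf T})$ or $\trho'(\,\cdot\,B_2^{\mathsf T})$ vanishes identically and both sides of \eqref{MAINTERMLEM5res} are $0$. Write $\mathcal I_1$, $\mathcal I_2$ for the two integrals in the last line of \eqref{MAINTERMLEM5res}; each is finite since $\rho',\trho'$ are indicators of bounded sets and $B_1,B_2$ have full rank. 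First I would use the product form of $\rho$ and the definition \eqref{NUbetaDEFnew} of $\eta_\beta$ (and the fact, noted below \eqref{NUbetaDEFnew}, that $S(\beta)_x'\setminus S(\beta)_x$ is $\eta_{\beta,x}$-null) to write, for every $\beta\in M_{m_1,m_2}(\R)$,
\begin{align*}
\int_{S(\beta)}\rho\bigl(xB_1^{\mathsf T},yB_2^{\mathsf T}\bigr)\,d\eta_\beta(x,y)
=\int_{U_{m_1}}\rho'\bigl(xB_1^{\mathsf T}\bigr)\,G_\beta(x)\,\frac{dx}{\dd(x)^{m_2}},
\qquad
G_\beta(x):=\int_{S(\beta)_x'}\trho'\bigl(yB_2^{\mathsf T}\bigr)\,d\eta_{\beta,x}(y).
\end{align*}
Since $\rho'\ge0$, the inequality in \eqref{MAINTERMLEM5res} will follow once I establish the fibrewise bound $G_\beta(x)\le G_0(x)$ for each $x\in U_{m_1}$.

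For that bound, recall (cf.\ the computation around \eqref{bxDEF}) that $S(\beta)_x'=b_x\beta+(x^\perp)^{m_2}$ is a parallel translate of the linear subspace $S(0)_x'=(x^\perp)^{m_2}$ of $M_{n,m_2}(\R)$, the translate $b_x\beta$ having all of its columns in the column span of $x$ and hence being transverse to $(x^\perp)^{m_2}$. Also, off a Lebesgue-null set, $\trho'(\,\cdot\,B_2^{\mathsf T})$ is the indicator of
\begin{align*}
K:=\bigl\{y\in M_{n,m_2}(\R)\col\|y\,\vecb_i\|<R_i\text{ for }i=1,\dots,k_2\bigr\},
\end{align*}
where $\vecb_1,\dots,\vecb_{k_2}\in\R^{m_2}$ are the transposed rows of $B_2$ and $R_i:=(W_i/\fV_n)^{1/n}$; as an intersection of preimages of balls under linear maps, $K$ is convex and symmetric about the origin, and it is bounded since $\rank B_2=m_2$. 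Thus $G_\beta(x)$ and $G_0(x)$ are the volumes (for the induced Euclidean measure) of the sections of $K$ by $b_x\beta+(x^\perp)^{m_2}$ and by $(x^\perp)^{m_2}$ respectively. By Brunn's theorem (a standard consequence of the Brunn--Minkowski inequality) the map $t\mapsto\vol\bigl((t\,b_x\beta+(x^\perp)^{m_2})\cap K\bigr)^{1/d}$, $d:=m_2(n-m_1)$, is concave on its support, and it is even because $K=-K$ and $(x^\perp)^{m_2}$ is a linear subspace; hence it is maximised at $t=0$, and taking $t=1$ and $t=0$ gives $G_\beta(x)\le G_0(x)$. Integrating this against $\rho'(xB_1^{\mathsf T})\,\dd(x)^{-m_2}\,dx\ge0$ yields the first line of \eqref{MAINTERMLEM5res}.

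For the evaluation of the $\eta_0$-integral, I would first note that $G_0$ is constant: the function $y\mapsto\trho'(yB_2^{\mathsf T})$ on $M_{n,m_2}(\R)$ is invariant under the columnwise action of $\O(n)$ (because $\trho'$ is a product of indicators of balls centred at $\bn$ and the $\O(n)$-action commutes with right multiplication by $B_2^{\mathsf T}$), and $\O(n)$ acts transitively on the $(n-m_1)$-dimensional subspaces of $\R^n$; hence $G_0(x)=\int_{(x^\perp)^{m_2}}\trho'(yB_2^{\mathsf T})\,dy$ equals $\mathcal I_2$ (take $x^\perp=\R^{n-m_1}\times\{\bn\}$) for all $x\in U_{m_1}$. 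Therefore
\begin{align*}
\int_{S(0)}\rho\bigl(xB_1^{\mathsf T},yB_2^{\mathsf T}\bigr)\,d\eta_0(x,y)
=\mathcal I_2\int_{U_{m_1}}\rho'\bigl(xB_1^{\mathsf T}\bigr)\,\frac{dx}{\dd(x)^{m_2}},
\end{align*}
and it remains to evaluate $\int_{U_{m_1}}\rho'(xB_1^{\mathsf T})\,\dd(x)^{-m_2}\,dx$. This I would do by two applications of \cite[Lemma 5.2]{aSaS2016}, exactly as in the proof of Lemma~\ref{etabetaasymptLEM}: one application (as in the step leading to \eqref{MAINTERM9}, but with no cutoff $\tchi_c$) rewrites it as $\tfrac{\prod_{j=n-m_1+1}^{n}(j\fV_j)}{\prod_{j=1}^{m_1}(j\fV_j)}\int_{U_{m_1}\cap(\R^{m_1}\times\{\bn\})^{m_1}}\rho'(xB_1^{\mathsf T})\,\dd(x)^{n-m_1-m_2}\,dx$, and a second application, now with ambient dimension $n-m_2$ and $m_1$ vectors, rewrites this last integral over $M_{m_1,m_1}(\R)$ as $\tfrac{\prod_{j=1}^{m_1}(j\fV_j)}{\prod_{j=n-m_1-m_2+1}^{n-m_2}(j\fV_j)}\,\mathcal I_1$. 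The three prefactors multiply to $\tfrac{\prod_{j=n-m_1+1}^{n}(j\fV_j)}{\prod_{j=n-m_1-m_2+1}^{n-m_2}(j\fV_j)}$ (in particular the integral is finite), and substituting back gives the asserted identity.

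The matching of the $\prod_j(j\fV_j)$ factors from the two invocations of \cite[Lemma 5.2]{aSaS2016} is routine. The step I expect to need the most care is the fibrewise inequality: the point is that, although $B_2$ is an arbitrary matrix in $A_{k_2,m_2}$, the set $K$ cut out by $\trho'(\,\cdot\,B_2^{\mathsf T})$ is still a \emph{convex}, origin-symmetric body, so that Brunn's theorem does apply to the sections of $K$ along the parallel family $\{t\,b_x\beta+(x^\perp)^{m_2}\}_{t\in\R}$ --- and one should confirm that this one-parameter slicing is non-degenerate, i.e.\ that $b_x\beta$ is genuinely transverse to $(x^\perp)^{m_2}$.
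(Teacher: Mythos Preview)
Your proposal is correct, and the equality part (the $\eta_0$-evaluation via $\O(n)$-invariance of $\trho'$ followed by two applications of \cite[Lemma 5.2]{aSaS2016}) matches the paper's argument essentially verbatim.

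For the inequality, however, you take a genuinely different route. The paper does not invoke Brunn's theorem; instead it proves the \emph{pointwise} bound $\trho'\bigl((b_x\beta+y')B_2^{\mathsf T}\bigr)\le\trho'\bigl(y'B_2^{\mathsf T}\bigr)$ for every $y'\in(x^\perp)^{m_2}$, by the following elementary observation: the columns of $b_x\beta$ lie in $V_x$ and those of $y'$ lie in $x^\perp$, and right multiplication by $B_2^{\mathsf T}$ preserves this, so for each $j$ the $j$th columns of $b_x\beta B_2^{\mathsf T}$ and $y'B_2^{\mathsf T}$ are orthogonal in $\R^n$; by Pythagoras $\|(b_x\beta+y')B_2^{\mathsf T}\vece_j\|\ge\|y'B_2^{\mathsf T}\vece_j\|$. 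Since $\trho'$ is a product of indicators of origin-centred balls, the pointwise inequality follows immediately, and integrating over $y'$ gives $G_\beta(x)\le G_0(x)$. Your Brunn-based argument is also valid --- and your transversality worry is easily settled, since $b_x\beta\in(V_x)^{m_2}=\bigl((x^\perp)^{m_2}\bigr)^\perp$ is nonzero whenever $\beta\neq0$, while $\beta=0$ is trivial --- but it is a heavier tool than the problem requires. The paper's approach exploits precisely this orthogonality of the translate $b_x\beta$ to the slicing subspace $(x^\perp)^{m_2}$ and yields a comparison of integrands rather than just of integrals; your convexity argument would survive a non-orthogonal translate, at the cost of only comparing section volumes.
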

\begin{proof}
By \eqref{NUbetaDEFnew} we have
\begin{align*}
\int_{S(\beta)}\rho\bigl(xB_1^{\mathsf{T}},yB_2^{\mathsf{T}}\bigr)\,d\eta_\beta(x,y)
=\int_{U_{m_1}}\rho'(xB_1^{\mathsf{T}})
\int_{S(\beta)_x}
\trho'\bigl(yB_2^{\mathsf{T}}\bigr)\,d\eta_{\beta,x}(y)\,\frac{dx}{\dd(x)^{m_2}}.
\end{align*}
Given $x\in U_{m_1}$
we set
$b_x:=x(x\transp x)^{-1}\in U_{m_1}$
as in \eqref{bxDEF}, 
and then we have $S(\beta)_x'=b_x\beta+(x^\perp)^{k_2}$,
and so
\begin{align*}
\int_{S(\beta)_x}
\trho'\bigl(yB_2^{\mathsf{T}}\bigr)\,d\eta_{\beta,x}(y)
=\int_{(x^\perp)^{k_2}}\trho'\bigl((b_x\beta+y')B_2^{\mathsf{T}}\bigr)\,dy',
\end{align*}
where $dy'$ is the $m_2(n-m_1)$-dimensional Lebesgue measure on $(x^\perp)^{k_2}$.
By \eqref{rhoptrhopDEF}, for any $w\in M_{n,k_2}(\R)$ we have
$\trho'(w)=1$ if and only if the $j$th column vector of $w$ is non-zero and has length
$<(W_j/\fV_n)^{1/n}$, for each $j\in\{1,\ldots,k_2\}$;
otherwise $\trho'(w)=0$.
However, for every $y'\in(x^\perp)^{k_2}$
we have $x\transp y'=0$ and hence
$(b_x\beta)\transp y' %
=0$, which means that 
every column vector of $b_x\beta$ is orthogonal against every column vector of $y'$.
Hence also
every column vector of $b_x\beta B_2\transp$ is orthogonal against every column vector of $y'B_2\transp$,
and therefore, for each $j\in\{1,\ldots,k_2\}$,
the $j$th column vector of $(b_x\beta+y')B_2\transp$ is at least as long as the $j$th column vector of $y'B_2\transp$.
It follows that for every $y'\in(x^\perp)^{k_2}$
we have $\trho'\bigl((b_x\beta+y')B_2^{\mathsf{T}}\bigr)
\leq \trho'\bigl(y'B_2^{\mathsf{T}}\bigr)$.
Hence:
\begin{align*}
\int_{S(\beta)_x}
\trho'\bigl(yB_2^{\mathsf{T}}\bigr)\,d\eta_{\beta,x}(y)
\leq
\int_{(x^\perp)^{k_2}}\trho'\bigl(y'B_2^{\mathsf{T}}\bigr)\,dy'
=\int_{S(0)_x}
\trho'\bigl(yB_2^{\mathsf{T}}\bigr)\,d\eta_{0,x}(y).
\end{align*}
Since this holds for every $x\in U_{m_1}$,
we conclude that the first relation (the inequality) in \eqref{MAINTERMLEM5res} holds.

Next, to prove the equality in \eqref{MAINTERMLEM5res},
we first note that, using the fact that $\trho'$ is rotationally invariant
in the sense that $\trho'(\kappa z)=\trho'(z)$ for all $\kappa\in O(n)$ and $z\in M_{n,k_2}$,
we have
\begin{align*}
\int_{(x^\perp)^{k_2}}\trho'\bigl(y'B_2^{\mathsf{T}}\bigr)\,dy'
=\int_{(\R^{n-m_1})^{m_2}}\trho'\bigl(y'B_2^{\mathsf{T}}\bigr)\,dy'
\end{align*}
for each $x\in U_{m_1}$.
Note that the latter integral is independent of $x$.
Hence we get
\begin{align*}
\int_{S(0)}\rho\bigl(xB_1^{\mathsf{T}},yB_2^{\mathsf{T}}\bigr)\,d\eta_0(x,y)
=\int_{U_{m_1}}\rho'(xB_1^{\mathsf{T}})\,\frac{dx}{\dd(x)^{m_2}}
\int_{(\R^{n-m_1})^{m_2}}\trho'\bigl(y'B_2^{\mathsf{T}}\bigr)\,dy'.
\end{align*}
Finally, by using the corresponding rotational invariance of $\rho'$
and applying 
\cite[Lemma 5.2]{aSaS2016}
twice, we have:
\begin{align*}
\int_{U_{m_1}}\rho'(xB_1^{\mathsf{T}})\,\frac{dx}{\dd(x)^{m_2}}
=\frac{\prod_{j=n-m_1+1}^n(j\fV_j)}{\prod_{j=1}^{m_1}(j\fV_j)}
\int_{(\R^{m_1}\times\{\bn\})^{m_1}}\rho'(xB_1^{\mathsf{T}})\,\dd(x)^{n-m_1-m_2}\,dx
\\
=\frac{\prod_{j=n-m_1+1}^n(j\fV_j)}{\prod_{j=n-m_1-m_2+1}^{n-m_2}(j\fV_j)}
\int_{(\R^{n-m_2}\times\{\bn\})^{m_1}}\rho'(xB_1^{\mathsf{T}})\,dx.
\end{align*}
Hence we obtain the equality in \eqref{MAINTERMLEM5res}.
\end{proof}

Regarding the product of unit ball volumes appearing in 
Lemma \ref{MAINTERMLEM5}, we note that
\begin{align*}
\frac{\prod_{j=n-m_1+1}^n (j\fV_j)}{\prod_{j=n-m_1-m_2+1}^{n-m_2} (j\fV_j)}
\asymp_{m} n^{-m_1m_2/2}\ll 1.
\end{align*}
(We can use this wasteful bound since it will turn out below that our integrals
are exponentially decreasing with respect to $n$.)
Hence it follows from \eqref{Wbetatrivbounds2} and Lemmas \ref{MAINTERMLEM4} and \ref{MAINTERMLEM5}
that 
\begin{align*}
\sum_{\beta\in M_{m_1,m_2}\!(\Z)} W(\beta)\int_{S(\beta)}\rho\bigl(xB_1^{\mathsf{T}},yB_2^{\mathsf{T}}\bigr)\,d\eta_\beta(x,y)
\hspace{150pt}
\\
\ll_k n^{k_1k_2}\cdot %
\int_{(\R^{n-m_2}\times\{\bn\})^{m_1}}\rho'(xB_1^{\mathsf{T}})\,dx
\int_{(\R^{n-m_1}\times\{\bn\})^{m_2}}\trho'(xB_2^{\mathsf{T}})\,dx.
\end{align*}
Note that the subspace 
$(\R^{n-m_2}\times\{\bn\})^{m_1}$
is mapped into
$(\R^{n-m_2}\times\{\bn\})^{k_1}$
by right multiplication by $B_1^{\mathsf{T}}$.
Now let $\sigma'$ be 
the function on $(\R^{n-m_2})^{k_1}$
given by the restriction of %
$\rho'$ to 
$(\R^{n-m_2}\times\{\bn\})^{k_1}$ composed with the obvious
isomorphism $(\R^{n-m_2}\times\{\bn\})^{k_1}=(\R^{n-m_2})^{k_1}$.
We also let $\tsigma'$ be the corresponding function on $(\R^{n-m_1})^{k_2}$
obtained from $\trho'$.
Then the last bound can be written:
\begin{align*}
n^{k_1k_2}\cdot %
\int_{(\R^{n-m_2})^{m_1}}\sigma'(xB_1^{\mathsf{T}})\,dx
\int_{(\R^{n-m_1})^{m_2}}\tsigma'(xB_2^{\mathsf{T}})\,dx.
\end{align*}

Using the above bound,
we conclude that
the total contribution from
all terms in \eqref{MAINTHEOREMabs2resREP}
with $B_1\notin\scrM_{k_1,m_1}'$ or $B_2\notin\scrM_{k_2,m_2}'$
is
\begin{align}\notag
\ll_k n^{k_1k_2}\sum_{m_1=1}^{k_1}\sum_{m_2=1}^{k_2}
\sum_{\langle B_1,B_2\rangle \in (A_{k_1,m_1}\times A_{k_2,m_2})\smallsetminus(\scrM'_{k_1,m_1}\times \scrM'_{k_2,m_2})}
\int_{(\R^{n-m_2})^{m_1}}\sigma'(xB_1^{\mathsf{T}})\,dx
\hspace{50pt}
\\\label{MAINTERM15}
\times\int_{(\R^{n-m_1})^{m_2}}\tsigma'(xB_2^{\mathsf{T}})\,dx.
\end{align}
We will now translate this sum into the notation used in
Rogers' original formulation of his mean value formula
(see Theorem \ref{ROGERSFORMULATHM}).
Let us denote by $\fA(k;m)$ the set of 
matrices of fixed size $m\times k$ appearing in the 
sum in \eqref{ROGERSFORMULATHMres}.\footnote{This set was called $\fA(m)$ in 
Section \ref{ROGERSFORMULAsec}, since there we were working with a single, fixed $k$.}
From the proof in Section \ref{ROGERSFORMULAsec}
of the fact that Theorem \ref{ROGERSFORMULATHMp} is a reformulation of Theorem \ref{ROGERSFORMULATHM},
we recall the definition of the map
$\gamma_2':\fA(k;m)\to M_{k,m}(\Z)^*$,
and the fact that an admissible choice of 
the set of representatives $A_{k,m}$ (cf.\ p.\ \pageref{Akmspec})
is given by
\begin{align*}
A_{k,m}:=\{\gamma_2'(D)\col D\in\fA(k;m)\}.
\end{align*}
Let us set
\begin{align*}
\tscrM_{k,m}':=\{B\transp\col B\in\scrM_{k,m}'\},
\end{align*}
and note that $\tscrM_{k,m}'\subset\fA(k;m)$;
in fact the matrices in
$\tscrM_{k,m}'$ are exactly the matrices considered in
\cite[Thm.\ 3]{aS2011o},
as we noted near the end of Section \ref{MAINTERMsec}.
We claim that the map $\gamma_2'$ can be chosen so that
\begin{align}\label{MAINTERM14}
\gamma_2'(D)=D\transp,\qquad\forall D\in\tscrM_{k,m}'.
\end{align}
Indeed, given $D\in\tscrM_{k,m}'$ one easily verifies that 
there exists a matrix 
$\tau=(\tau_{i,j})\in\GL_k(\Z)$
satisfying $D \tau=\bigl(I_m\: 0\bigr)$
and $\tau_{i,j}=\delta_{i,\nu_j(D\transp)}$ for all $i\in\{1,\ldots,k\}$ and $j\in\{1,\ldots,m\}$.
Hence the relation \eqref{ELEMDIVBASIClempf1} holds
for our matrix $D$
(which has elementary divisors $\ve_1=\cdots=\ve_m=1$)
with $\gamma_1=I$ in $\GL_m(\Z)$
and $\gamma_2=\tau^{-1}$ in $\GL_k(\Z)$.
Thus: We may choose $\gamma_1(D)=I$ 
and $\gamma_2(D)=\tau^{-1}$.
Now by definition, 
$\gamma_2'(D)=\bigl((I_m\: 0)\gamma_2\bigr)\transp$,
and it follows from $D \tau=\bigl(I_m\: 0\bigr)$ that
$\bigl(I_m\: 0\bigr)\gamma_2=D$;
therefore \eqref{MAINTERM14} holds.

From now on we assume that for any $1\leq m\leq k$,
the map $\gamma_2'$ has been chosen so that \eqref{MAINTERM14} holds.
It then follows from the above discussion,
combined with the identity in \eqref{BtoDkeyidentity},
that our bound in \eqref{MAINTERM15}
can be equivalently expressed as
\begin{align}\label{MAINTERM15equ}
 n^{k_1k_2}\sum_{m_1=1}^{k_1}\sum_{m_2=1}^{k_2}
\sum_{\langle D_1,D_2\rangle \in (\fA(k_1,m_1)\times \fA(k_2,m_2))\smallsetminus(\tscrM'_{k_1,m_1}\times \tscrM'_{k_2,m_2})}
J_{n-m_2}(\sigma';D_1)\,J_{n-m_1}(\tsigma';D_2),
\end{align}
where for any $1\leq m\leq k<\tn$, $D\in\fA(k,m)$ and (measurable) function
$\sigma:(\R^{\tn})^m\to\R_{\geq0}$, $J_{\tn}(\sigma;D)$ is defined by
\begin{align*}
J_{\tn}(\sigma;D):=\Big(\frac{e_1}{q}\cdots\frac{e_m}{q}\Big)^{\tn}
\int_{(\R^{\tn})^m}\sigma(xq^{-1}D)\,dx,
\end{align*}
where $q=q(D)$ is as in Theorem \ref{ROGERSFORMULATHM},
and $e_1,\ldots,e_m$ are the elementary divisors of $D$.
Note here that for any $1\leq m_1\leq k_1$ and $1\leq m_2\leq k_2$,
the sum over $\langle D_1,D_2\rangle$ appearing in \eqref{MAINTERM15equ}
can be decomposed as the sum of
the three products
\begin{align}\label{MAINTERM17}
\Biggl(\sum_{D_1\in\fA(k_1,m_1)\smallsetminus\tscrM'_{k_1,m_1}}
J_{n-m_2}(\sigma';D_1)\Biggr)
\Biggl(\sum_{D_2\in\fA(k_2,m_2)\smallsetminus\tscrM'_{k_2,m_2}}
J_{n-m_1}(\tsigma';D_2)\Biggr)
\end{align}
and
\begin{align}\label{MAINTERM18}
\Biggl(\sum_{D_1\in\tscrM'_{k_1,m_1}}
J_{n-m_2}(\sigma';D_1)\Biggr)
\Biggl(\sum_{D_2\in\fA(k_2,m_2)\smallsetminus\tscrM'_{k_2,m_2}}
J_{n-m_1}(\tsigma';D_2)\Biggr)
\end{align}
and
\begin{align}\label{MAINTERM19}
\Biggl(\sum_{D_1\in\fA(k_1,m_1)\smallsetminus\tscrM'_{k_1,m_1}}
J_{n-m_2}(\sigma';D_1)\Biggr)
\Biggl(\sum_{D_2\in\tscrM'_{k_2,m_2}}
J_{n-m_1}(\tsigma';D_2)\Biggr).
\end{align}

Now the sum
\begin{align}\label{MAINTERM20}
\sum_{D_1\in\fA(k_1,m_1)\smallsetminus\tscrM'_{k_1,m_1}}
J_{n-m_2}(\sigma';D_1)
\end{align}
which appears as a factor in \eqref{MAINTERM17} and \eqref{MAINTERM19},
is exactly the sum which is treated in
\cite[pp.\ 950(top)--951(middle)]{aS2011o}
(applied with ``$n$'' taken to be $n-m_2$ in our notation).
It should here be noted that
(cf.\ \eqref{rhoptrhopDEF})
\begin{align*}
\sigma'(\vecv_1,\ldots,\vecv_{k_1})
:=\prod_{j=1}^{k_1}\sigma_j(\vecv_j)
\end{align*}
where for each $j$, %
$\sigma_j$ is the characteristic function of the open $(n-m_2)$-ball of 
radius $(V_j/\fV_n)^{1/n}$ centered at the origin, with the origin removed.
The volume of the ball which is the support of $\sigma_j$ is therefore
\begin{align}\label{MAINTERM22}
\fV_{n-m_2}(V_j/\fV_n)^{(n-m_2)/n}\ll_m V_j
\end{align}
(in fact the left-hand side tends to $e^{m_2/2}V_j$ as $n\to\infty$).
The key points for us is that these volumes stay \textit{bounded} as $n\to\infty$,
since the volumes $V_1,\ldots,V_{k_1}$ are fixed.
Therefore, it follows from the bounds in 
\cite[pp.\ 950(top)--951(middle)]{aS2011o}
that 
the sum in \eqref{MAINTERM20}
is $\ll_m (3/4)^{\frac{n-m_2}2}\ll_m(3/4)^{\frac n2}$
(the implied constant also depends on the $V_j$s).

In the same way, we also have
\begin{align}\label{MAINTERM21}
\sum_{D_2\in\tscrM'_{k_2,m_2}}
J_{n-m_1}(\tsigma';D_2)
\ll_m(3/4)^{\frac n2}.
\end{align}
Furthermore, by the proof of
\cite[Prop.\ 1]{aS2011o},
for each $D_1$ in the finite set
$\tscrM'_{k_1,m_1}$ 
we have that $J_{n-m_2}(\sigma';D_1)$ tends to a finite limit as $n\to\infty$;
the same thing of course also holds for
$J_{n-m_1}(\tsigma';D_2)$ for each fixed $D_2\in\tscrM'_{k_2,m_2}$.

In view of these bounds, we conclude that the expression in
\eqref{MAINTERM15equ} tends to zero
exponentially rapidly
as $n\to\infty$.
This completes the proof of Theorem \ref{MOMENTTHM}.
\hfill$\square$ $\square$ $\square$

\subsection{Proof of Theorem \ref*{JOINTPOISSONTHEOREM}}
\label{finalproofSEC}

As in the statement of
Theorem \ref{JOINTPOISSONTHEOREM},
let $0<T_1<T_2<T_3<\cdots$
and $0<T_1'<T_2'<T_3'<\cdots$ denote the points of two independent 
Poisson processes on $\R^+$ with constant intensity $\frac{1}{2}$.
For any fixed $V>0$ we introduce the two integer-valued random variables
\begin{align*}
N(V):=\#\{j\col T_j<V\}
\qquad\text{and}\qquad
\tN(V):=\#\{j\col T_j'<V\}.
\end{align*}
Now by standard arguments
(cf.\ the proof of 
\cite[Cor.\ 1]{aS2011o}),
Theorem \ref{MOMENTTHM}
implies that,
for any fixed 
$k_1,k_2\in\Z_{\geq0}$ and any fixed real numbers
$0<V_1\leq V_2\leq\cdots\leq V_{k_1}$
and $0<W_1\leq W_2\leq\cdots\leq W_{k_2}$,
the random vector
\begin{align*}
\Bigl(\tfrac12 N_1(L),\ldots,\tfrac12 N_{k_1}(L),\tfrac12 \tN_1(L^*),\ldots,\tfrac12 \tN_{k_2}(L^*)\Bigr)
\end{align*}
converges in distribution to the random vector
\begin{align*}
\Bigl(N(V_1),\ldots,N(V_{k_1}),\tN(W_1),\ldots,\tN(W_{k_2})\Bigr).
\end{align*}
Theorem \ref{JOINTPOISSONTHEOREM} is an immediate consequence of this fact.
\hfill$\square$ $\square$ $\square$

\vspace{20pt}

\end{document}